\DeclareMathOperator{\Id}{Id}
\DeclareMathOperator{\Var}{Var}
\DeclareMathOperator{\supp}{supp}
\DeclareMathOperator{\Ent}{Ent}
\newcommand{\bE}{\ensuremath{\mathbb{E}}}
\newcommand{\bN}{\ensuremath{\mathbb{N}}}
\newcommand{\bP}{\ensuremath{\mathbb{P}}}
\newcommand{\bR}{\ensuremath{\mathbb{R}}}
\newcommand{\bZ}{\ensuremath{\mathbb{Z}}}
\newcommand{\ind}{\ensuremath{\mathbbm{1}}}
\newcommand{\cE}{\ensuremath{\mathcal{E}}}
\newcommand{\cG}{\ensuremath{\mathcal{G}}}
\newcommand{\cN}{\ensuremath{\mathcal{N}}}
\newcommand{\cS}{\ensuremath{\mathcal{S}}}
\newcommand{\cT}{\ensuremath{\mathcal{T}}}
\newcommand{\cX}{\ensuremath{\mathcal{X}}}
\newcommand{\bs}{\backslash}
\newcommand{\abs}[1]{\left\vert \, #1 \, \right\vert}
\newcommand{\norm}[1]{\left\Vert \, #1 \, \right\Vert}
\newcommand{\normb}[1]{\interleave \, #1 \, \interleave}
\newcommand{\ddx}[1][1]{\ifnum#1=1 \frac{d}{dx} \else \frac{d^{#1}}{dx^{#1}} \fi}
\newcommand{\ddy}[1][1]{\ifnum#1=1 \frac{d}{dy} \else \frac{d^{#1}}{dy^{#1}} \fi}
\newcommand{\ddt}[1][1]{\ifnum#1=1 \frac{d}{dt} \else \frac{d^{#1}}{dt^{#1}} \fi}
\newcommand{\intc}[2]{\int_{\mathrlap{#1}}^{\mathrlap{#2}}\;}
\newtheorem{theorem}{Theorem}[section]
\newtheorem{lemma}[theorem]{Lemma}
\newtheorem{corollary}[theorem]{Corollary}
\newenvironment{remark}[1][Remark]{\begin{trivlist}
\item[\hskip \labelsep {\bfseries #1}]}{\end{trivlist}}
\newcommand{\oT}{{\mathring{T}}}
\newcommand{\PsiSX}[1][X]{{\Psi_{S,#1}}}
\newcommand{\oN}{{\overline{N}}}
\newcommand{\oS}{{\overline{S}}}
\newcommand{\uS}{{\underline{S}}}
\newcommand{\oNt}{{\overline{N}_{[0,t]}}}
\newcommand{\oNSv}{{\overline{N}_{[\uS(v),\oS(v)[}}}
\newcommand{\hE}{\widehat{\bE}}
\newcommand{\hP}{\widehat{\bP}}
\begin{document}

\title{Talagrand's inequality for Interacting Particle Systems satisfying a log-Sobolev inequality}
\author{Florian V\"ollering\thanks{University of G\"ottingen, IMS, Goldschmidtstra\ss e 7
37077 G\"ottingen,
Germany \newline email: florian.voellering@mathematik.uni-goettingen.de, tel.: +49551 3913520}}

\maketitle

\begin{abstract}
Talagrand's inequality for independent Bernoulli random variables is extended to many interacting particle systems (IPS). The main assumption is that the IPS satisfies a log-Sobolev inequality. In this context it is also shown that a slightly stronger version of Talagrand's inequality is equivalent to a log-Sobolev inequality.

Additionally we also look at a common application, the relation between the probability of increasing events and the influences on that event by changing a single spin. 
\end{abstract}

\hspace{1cm}\newline
{\bf Keywords:} Talagrand's inequality, Russo's formula, variance estimate, dependent random variables, log-Sobolev inequality, interacting particle system\newline
{\bf MSC subject classification:} 60K35

\section{Introduction}
In the famous paper \cite{TALAGRAND:94}, Talagrand proved that for Bernoulli measures $\mu_p$, $0<p<1$, on the discrete 
hypercube $\{0,1\}^N$ the following inequality holds:
\begin{align}\label{eq:talagrand-original}
 \Var_\mu(f) \leq C \sum_{i=1}^N \frac{\norm{\nabla_i f}_{L^2(\mu_p)}^2}{1+\log\left(\norm{\nabla_i f}_{L^2(\mu_p)}/\norm{\nabla_i f}_{L^1(\mu_p)}\right)}.
\end{align}
Here $f$ is an arbitrary function in $L^2(\mu_p)$, $\nabla_i f (\eta) = f(\eta^i)-f(\eta)$, $\eta^i$ is obtained from $\eta$ by changing the $i$th coordinate, and $C=C(p)$ is a constant dependent only on $p$. 

This inequality has found many applications, for example in random graph theory \cite{FRIEDGUT:99} and percolation \cite{BENJAMINI:KALAI:SCHRAMM:03}, and has as a consequence the widely used KKL-bound for influences of Boolean functions \cite{KAHN:KALAI:LINIAL:88}. However the original proof is limited to product measures on the discrete hypercube. In \cite{VANDENBERG:KISS:12} this restriction to independent coordinates was circumvented by encoding dependent random variables as functions of a larger set of independent random variables. It was shown that if the encoding is in some sense not too far from independent random variables, then an analogue to \eqref{eq:talagrand-original} holds. 
There are also versions of Talagrand's inequality and related inequalities like the KKL bound on continuous spaces \cite{BOURGAIN:KAHN:KALAI:92} \cite{CORDERO-ERAUSQUIN:LEDOUX:12} \cite{KELLER:MOSSEL:SEN:12}.
In \cite{CORDERO-ERAUSQUIN:LEDOUX:12} Talagrand's inequality is proven under the assumption that there are Markovian dynamics which satisfy the log-Sobolev inequality and a specific permutation relation between directional derivatives and the dynamics. While this approach is fruitful for products of Gaussian measures or the 
uniform measure on the sphere the required permutation relation is too restrictive for typical interacting particle systems. 

In this paper we extend Talagrand's inequality to a wide class of interacting particle systems under the assumption of a log-Sobolev inequality. For example many finite range Gibbs measures are included. An important application of Talagrand's inequality is the study of influences on increasing events. By proving a version of Russo's formula for dependent random variables we can obtain similar estimates to the i.i.d. case. 

We will prove under some conditions an equivalence between a version of Talagrand's inequality and a log-Sobolev inequality. Harder to prove, and from the point of view of applications more important, is the implication from a log-Sobolev inequality to Talagrand's inequality. Sufficient conditions for a log-Sobolev inequality are available in the context of interacting particle systems, and Talagrand's inequality as a consequence adds a useful variance estimate for the study of those.

%An application is for example sub-linear variance of the travel-time in first passage percolation. The proof in \cite{BENJAMINI:KALAI:SCHRAMM:03} carries over to dependent weights once Talagrand's inequality is established.

\section{Setting and Results}
\subsection{Notation and Definitions}
Let $E$ be a Polish space, $\cG$ a finite or countable group and $\Omega:=E^{\cG}$. The most common choice is $E=\{0,1\}$, $\cG=\bZ^d$. Elements of $\Omega$ we will denote by $\eta,\xi$, and elements of $\cG$ by $x,y,z$. 
Given a probability measure $\mu$ on $\Omega$ and $p\geq 1$, we write
\[ \norm{f}_p = \norm{f}_{L^p(\mu)} = \left(\int \abs{f}^p \,d\mu\right)^{\frac1p} .\]
By $\Phi$ we denote the convex function $\Phi(x)=\frac{x^2}{\log(e+\abs{x})}$. To $\Phi$ we consider the associated Orlicz norm,
\begin{align}\label{eq:Orlicz-norm}
 \norm{f}_\Phi := \inf\left\{ a>0 : \int \Phi\left(\frac{f}{a}\right)\,d\mu\leq 1 \right\}.
\end{align}
Let $L$ be the generator of a Markov process on $\Omega$, which we assume to be reversible with respect to an invariant and ergodic probability measure $\mu$. The Dirichlet from associated to $(L,\mu)$ is given by $\cE(f,g)=-\int f Lg\,d\mu$. The pair $(L,\mu)$ is said to satisfy a Poincar\'e inequality if there is a constant $\kappa>0$ so that
\begin{align}\label{eq:Poincare}
 \Var_\mu(f) = \int \left(f-\int f d\mu\right)^2 d\mu \leq \frac1\kappa \cE(f,f), \quad \forall f\in L^2(\mu).
\end{align}
Similarly, $(L,\mu)$ satisfies a logarithmic Sobolev inequality if there is a constant $\rho>0$ so that
\begin{align}\label{eq:log-Sobolev}
 \Ent_\mu(f^2) = \int f^2 \log\left(\frac{f^2}{\norm{f}_2^2}\right)\,d\mu \leq \frac2\rho\cE(f,f), \quad \forall f\in L^2(\mu).
\end{align}
The best constants $\kappa,\rho$ in \eqref{eq:Poincare} and \eqref{eq:log-Sobolev} are called the spectral gap and the logarithmic Sobolev constant, which we will also call $\kappa$ and $\rho$. It is well known that $\rho\leq\kappa$.

Let $(P_t)_{t\geq0}$ be the semi-group generated by $L$. Both \eqref{eq:Poincare} and \eqref{eq:log-Sobolev} imply contraction properties, \eqref{eq:L2-decay} and \eqref{eq:hypercontractivity}, respectively:
\begin{align}
 \norm{P_t f}_2 &\leq e^{-\kappa t} \norm{f}_2;	\label{eq:L2-decay}\\
 \norm{P_t f}_q &\leq \norm{f}_{p(t,q)},\quad p(t,q) = 1+(q-1)e^{-2\rho t}.	\label{eq:hypercontractivity}
\end{align}
The reader unfamiliar with these notions may be referred to \cite{GUIONNET:ZEGARLINSKI:03}. In the context of interacting particle systems and Gibbs measures there are many interrelated notions of functional inequalities and mixing conditions on finite and infinite volume, which are also connected to decay of the semi-group. See \cite{HOLLEY:STROOK:87},\cite{MARTINELLI:OLIVIERI:94a}, \cite{MARTINELLI:OLIVIERI:94} for a study of those. 

We want to exploit some of the geometry available on $\Omega$. To do so we assume that the generator is of the form
\begin{align}\label{eq:generator}
 Lf(\eta) := \sum_{x\in\cG} \int_E \left(f(\eta_{x\mapsto a})-f(\eta)\right) \,\mu_{x,\eta}(d a),
\end{align}
where $(\mu_{x,\eta})_{\eta\in\Omega,x\in\cG}$ is a collection of probability measures on $E$, and $\eta_{x\mapsto a}$ is the configuration which is identical to $\eta$ except at $x$, where it has value $a$. To avoid notation we will also write $\mu_{x,\eta}(d\xi)$ with the understanding that $\xi=\eta_{x\mapsto a}$. We assume that the interaction is \emph{finite range}: There exists a finite $\cN\subset \cG$ so that if $\eta'(y)=\eta(y)$ for all $y\in x+\cN$, then $\mu_{x,\eta'}=\mu_{x,\eta}$. Glauber dynamics of interacting particle systems are a standard example for a generator of the form \eqref{eq:generator}.

Based on the generator $L$ and the measures $\mu_{x,\eta}$ we define for $x\in\cG$ the linear operator $\Psi_x$ on $L^p(\mu),1\leq p\leq \infty$, via
\begin{align}\label{eq:Psix}
 \Psi_x f (\eta) := \int f(\xi) \,\mu_{x,\eta}(d\xi).
\end{align}
It is a consequence of the reversibility of $L$ that
\begin{align}
 \int \Psi_x f \,d\mu = \int f \,d\mu. \label{eq:Psi-invariant}
\end{align}
From \eqref{eq:Psi-invariant} follows an alternative way of writing the Dirichlet form:
\begin{align}
 \cE(f,f) &= \frac12\sum_{x\in\cG}\int \left[\Psi_x(f-f(\eta))^2\right](\eta)\,\mu(d\eta).
\end{align}
Define the derivative of a function $f$ in direction $x$ as
\begin{align}\label{eq:Dx}
 D_x f := \Psi_x f - f.
\end{align}
Note that this derivative is derived from the jump dynamics and does not necessarily agree with a possible natural derivative on $E$. 

We will need that the derivatives of a function and the Dirichlet form are compatible in a certain sense. To make this concrete, a function $f\in L^2(\mu)$ is called \emph{good with constant $K<\infty$}, if 
\begin{align}\label{eq:reverse-Jensen}
\cE(P_tf,P_tf)\leq K\sum_{x\in\cG}\norm{D_x P_tf}_2^2, \quad \forall\; t\geq0.
\end{align}
To put \eqref{eq:reverse-Jensen} into perspective, observe that by Jensen's inequality we have $\sum_{x\in\cG}\norm{D_x f}_2^2\leq 2\cE(f,f)$. In this sense \eqref{eq:reverse-Jensen} is a form of a reverse Jensen's inequality, which may seem restrictive. However, under fairly mild conditions we have in fact that all functions in $L^2(\mu)$ are good.
\begin{lemma}\label{lemma:IPS-K-condition}
Suppose $E$ is finite and $\alpha:=\inf\{\mu_{x,\eta}(e) : \eta\in\Omega,x\in\cG,e\in E\}>0$. Then any $f\in L^2(\mu)$ is good with constant $\alpha^{-3}$.
\end{lemma}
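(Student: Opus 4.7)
The plan is to prove the stronger bound $\cE(g,g) \leq \alpha^{-3} \sum_{x\in\cG} \|D_x g\|_2^2$ for every $g \in L^2(\mu)$; applying it with $g = P_t f$ then gives \eqref{eq:reverse-Jensen}. The argument proceeds site by site. Define the single-site Dirichlet form $\cE_x(f,f) := \tfrac12\int \Psi_x\!\bigl((f-f(\eta))^2\bigr)(\eta)\,\mu(d\eta)$, so that $\cE(f,f) = \sum_{x} \cE_x(f,f)$. Applying the bias--variance decomposition to the kernel $\mu_{x,\eta}$ acting on $a \mapsto f(\eta_{x\mapsto a})$ yields the pointwise identity
\[ \int \bigl(f(\eta_{x\mapsto a}) - f(\eta)\bigr)^2\, \mu_{x,\eta}(da) = \Var_{\mu_{x,\eta}}(f) + (D_x f(\eta))^2, \]
which upon integration against $\mu$ becomes $2\cE_x(f,f) = \int \Var_{\mu_{x,\eta}}(f)\,d\mu + \|D_x f\|_2^2$. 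The task therefore reduces to bounding the variance integral by an $\alpha$-dependent multiple of $\|D_x f\|_2^2$.

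For fixed $\eta' \in E^{\cG\setminus\{x\}}$, let $\pi$ denote the conditional distribution of $\eta(x)$ under $\mu$ given $\eta|_{\cG\setminus\{x\}} = \eta'$, and consider the Markov kernel $P(b,a) := \mu_{x,(\eta',b)}(a)$ on $E$. The detailed balance relation $\mu(\eta',b)\mu_{x,(\eta',b)}(a) = \mu(\eta',a)\mu_{x,(\eta',a)}(b)$, which is equivalent to the reversibility of $L$, makes $P$ reversible with respect to $\pi$; summing that relation over $a$ and dividing by the marginal $\mu(\eta')$ further gives $\pi(b) = \sum_a \pi(a) P(a,b) \geq \alpha$. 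Hence $P(b,a) \geq \alpha \geq \alpha\pi(a)$, and Doeblin's decomposition $P = \alpha K + (1-\alpha) Q$ applies, where $K(b,a) := \pi(a)$ is the rank-one projector $Kg \equiv \int g\,d\pi$ and $Q$ is a $\pi$-stationary stochastic kernel. For $g \in L^2(\pi)$ with $\int g\,d\pi = 0$ one then has $Pg = (1-\alpha)Qg$, and the standard $L^2(\pi)$-contractivity of the stationary kernel $Q$ gives $\|Pg\|_\pi \leq (1-\alpha)\|g\|_\pi$.

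Consequently $\|g - Pg\|_\pi \geq \alpha\|g\|_\pi$ whenever $\int g\,d\pi = 0$; since both $\|g\|_\pi^2 - \|Pg\|_\pi^2$ and $\|g-Pg\|_\pi$ are invariant under adding a constant to $g$, the Poincar\'e-type estimate
\[ \|g\|_\pi^2 - \|Pg\|_\pi^2 = \bigl(\|g\|_\pi - \|Pg\|_\pi\bigr)\bigl(\|g\|_\pi + \|Pg\|_\pi\bigr) \leq 2\|g\|_\pi\, \|g-Pg\|_\pi \leq 2\alpha^{-1}\|g-Pg\|_\pi^2 \]
holds for arbitrary $g \in L^2(\pi)$. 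Specializing to $g(b) := f(\eta',b)$ and integrating against the $\cG\setminus\{x\}$-marginal of $\mu$, the left-hand side becomes $\int \Var_{\mu_{x,\eta}}(f)\,d\mu$ and the right-hand side becomes $2\alpha^{-1}\|D_x f\|_2^2$. Substituting back into the opening identity and summing over $x$ yields $\cE(f,f) \leq (\tfrac12 + \alpha^{-1}) \sum_x \|D_x f\|_2^2$, and since $\sum_{e\in E}\mu_{x,\eta}(e) = 1$ together with $\mu_{x,\eta}(e) \geq \alpha$ forces $\alpha \leq |E|^{-1} \leq \tfrac12$ (the case $|E|=1$ is trivial), we conclude $\tfrac12 + \alpha^{-1} \leq \alpha^{-3}$.

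The main obstacle is the lower bound $\pi \geq \alpha$ on the stationary single-site marginal: the hypothesis directly controls only the jump kernel $\mu_{x,\eta}$, not any marginal of $\mu$, and without this bound the Doeblin step collapses. This is where the reversibility of $L$ is used in an essential way; the remaining ingredients -- the bias--variance identity and the Doeblin-based Poincar\'e estimate -- are then routine once the correct measure has been identified.
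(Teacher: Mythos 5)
Your proof is correct, but it takes a genuinely different route from the paper's. The paper argues pointwise inside each single-site fiber: it picks the maximizer $\eta^*$ and minimizer $\eta_*$ of $f$ on $\{\xi:\xi=\eta\text{ off }x\}$, observes that on the event $\{\eta=\eta_*\}$ the discrete gradient $D_xf(\eta)$ already captures the full oscillation $f(\eta^*)-f(\eta_*)$ up to one factor of $\alpha$, and then uses \eqref{eq:Psi-invariant} to trade the indicator $\ind_{\eta=\eta_*}$ for the weight $\mu_{x,\eta}(\eta_*(x))\geq\alpha$; since $(f(\eta^*)-f(\eta_*))^2$ dominates $\Psi_x(f-f(\eta))^2(\eta)$, this costs $\alpha$ three times and gives $\int \Psi_x\bigl((f-f(\eta))^2\bigr)(\eta)\,\mu(d\eta)\leq \alpha^{-3}\norm{D_xf}_2^2$ directly. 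You instead split $2\cE_x$ via the bias--variance identity and control the conditional variance through a Doeblin minorization of the single-site kernel $P(b,a)=\mu_{x,(\eta',b)}(a)$, i.e.\ a conditional Poincar\'e inequality with gap $\alpha$. Both are single-site arguments exploiting $\mu_{x,\eta}\geq\alpha$, but yours is more structural: it avoids selecting extremizers, would extend verbatim to general (non-finite) $E$ under a minorization $\mu_{x,\eta}\geq\alpha\nu_x$, and yields the sharper constant $\tfrac12+\alpha^{-1}$, whereas the paper's comparison is shorter and uses nothing beyond \eqref{eq:Psi-invariant}. Two small points of care: your detailed-balance display $\mu(\eta',b)\mu_{x,(\eta',b)}(a)=\mu(\eta',a)\mu_{x,(\eta',a)}(b)$ should be read through the disintegration $\mu(d\eta)=\bar\mu(d\eta')\,\pi_{\eta'}(db)$, since point masses vanish when $\cG$ is infinite, and site-wise reversibility is formally stronger than reversibility of $L$; however, you only use the two consequences $\pi P=\pi$ and $\pi\geq\alpha$, the first being exactly the disintegrated form of \eqref{eq:Psi-invariant} (which the paper itself invokes) and the second following from it via $\pi(b)=\sum_a\pi(a)P(a,b)\geq\alpha$. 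Note also that reversibility of $P$ with respect to $\pi$ is never actually needed: stationarity alone gives the $L^2(\pi)$-contractivity of $Q$ by Jensen's inequality, so your argument stands as written once the stationarity of $\pi$ is sourced from \eqref{eq:Psi-invariant}.
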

\subsection{Talagrand's inequality}
\begin{theorem}\label{thm:talagrand}
Assume that $(L,\mu)$ satisfies the log-Sobolev inequality \eqref{eq:log-Sobolev} with constant $\rho>0$, that $L$ is of the form \eqref{eq:generator} and the interaction is finite range. Let $f\in L^2(\mu)$ be a good function with constant $K<\infty$. Then there exists a constant $C>0$ so that
\begin{align}\label{eq:talagrand-orlicz}
 \Var_\mu(f) \leq C \sum_{x\in\cG} \norm{D_x f}^2_\Phi.
\end{align}
The constant $C$ is depends only on $K$, $\abs{\cN}$ and $\rho$. 
\end{theorem}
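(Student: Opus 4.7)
The plan is to follow the semigroup route to Talagrand's inequality pioneered by Ledoux in the product case, using the good function hypothesis \eqref{eq:reverse-Jensen} to reduce the Dirichlet form to a sum of single-site gradients, and then exploiting hypercontractivity \eqref{eq:hypercontractivity} together with the $L^2$-decay \eqref{eq:L2-decay} to recover the Orlicz norm $\norm{D_x f}_\Phi$ on the right-hand side.

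First I would write $\Var_\mu(f) = 2\int_0^\infty \cE(P_tf,P_tf)\,dt$, which follows from $\tfrac{d}{dt}\norm{P_tf - \mu(f)}_2^2 = -2\cE(P_tf,P_tf)$ and ergodicity of $\mu$. Applying \eqref{eq:reverse-Jensen} pointwise in $t$ yields $\Var_\mu(f) \leq 2K\sum_{x\in\cG}\int_0^\infty \norm{D_xP_tf}_2^2\,dt$, so the task reduces to bounding, for each $x\in\cG$, the single-site integral $\int_0^\infty \norm{D_xP_tf}_2^2\,dt$ in terms of $\norm{D_xf}_\Phi^2$ and of Orlicz norms at sites in a bounded neighbourhood of $x$ (which reassemble after summation).

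To bound that single-site integral I would split it at a threshold $T_x>0$ to be optimised. On $[T_x,\infty)$ the spectral gap bound $\kappa\geq\rho$, combined with $\mu(D_xP_tf)=0$ from \eqref{eq:Psi-invariant} and the semigroup property, gives $L^2$-decay of $D_xP_tf$ and hence a contribution at most $C\rho^{-1}e^{-2\rho T_x}\norm{D_xf}_2^2$. On $[0,T_x]$ I would appeal to \eqref{eq:hypercontractivity}: in the idealised situation where $D_x$ and $P_t$ commute (as in the product case) one has $\norm{D_xP_tf}_2 = \norm{P_tD_xf}_2 \leq \norm{D_xf}_{p(t,2)}$ with $p(t,2)=1+e^{-2\rho t}$, and the elementary $L^p$-interpolation $\norm{g}_p \leq \norm{g}_1^{2/p-1}\norm{g}_2^{2-2/p}$ reduces the short-time integral to an explicit computation. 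Choosing $T_x$ so that $e^{-\rho T_x}\norm{D_xf}_2$ and $\norm{D_xf}_1$ are comparable then balances the two regimes and produces a bound of the order $\norm{D_xf}_2^2/(1+\log(\norm{D_xf}_2/\norm{D_xf}_1))$, which up to absolute constants equals $\norm{D_xf}_\Phi^2$.

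The main obstacle is precisely that in the interacting setting $D_x$ and $P_t$ do \emph{not} commute, and this is where the permutation-relation approach of \cite{CORDERO-ERAUSQUIN:LEDOUX:12} breaks down for IPS. I would bypass it by the Duhamel identity $D_xP_tf - P_tD_xf = \int_0^t P_{t-s}[\Psi_x,L]P_sf\,ds$ combined with the finite range hypothesis: since $\Psi_x$ only alters coordinate $x$ and depends only on coordinates in $x+\cN$, the commutator $[\Psi_x,\Psi_y]=[D_x,\Psi_y]$ vanishes unless $y$ lies in a neighbourhood of $x$ of cardinality at most $\abs{\cN}$. Hence $[\Psi_x,L]$ is a sum of at most $\abs{\cN}$ bounded local operators, each controllable by neighbouring single-site gradients $D_y$. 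Re-inserting this into the short-time estimate and re-applying hypercontractivity to the iterated integrals yields a bound of the form $\norm{D_xP_tf}_2^2 \lesssim \sum_{y\in x+\cN}\norm{D_yf}_{p(t,2)}^2$ at the cost of a multiplicative constant depending on $\abs{\cN}$. Summing over $x$, each $y$ is counted at most $\abs{\cN}$ times, which yields \eqref{eq:talagrand-orlicz} with $C = C(K,\abs{\cN},\rho)$.
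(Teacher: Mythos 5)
Your overall architecture --- variance via the semigroup, the good-function hypothesis to pass from $\cE(P_tf,P_tf)$ to $\sum_{x}\norm{D_xP_tf}_2^2$, and hypercontractivity to produce a norm interpolating between $L^1$ and $L^2$ --- is the paper's strategy. But the step you dispose of in a few lines is the actual content of the paper. After the Duhamel identity $D_xP_tf-P_tD_xf=\int_0^tP_{t-s}[\Psi_x,L]P_sf\,ds$, the finite-range commutator is controlled by single-site gradients of $P_sf$, i.e.\ by $D_yP_sf$ for $y$ near $x$ --- the same unknown quantity at earlier times, not by $D_yf$. Your assertion that ``re-applying hypercontractivity to the iterated integrals'' closes the estimate and yields $\norm{D_xP_tf}_2^2\lesssim\sum_{y\in x+\cN}\norm{D_yf}_{p(t,2)}^2$ is exactly the unproven point: iterating the expansion produces an ever-growing family of error terms in which the semigroup acts only on pieces of $[0,t]$ separated by the extracted single-site operators, so hypercontractivity can be applied only across partial time intervals, and one must show that the combinatorial proliferation of terms (of order $\abs{\cT_n}$, i.e.\ Catalan growth) is beaten by the smallness of the iterated time integrals (of order $t^{2n-2}/(2n-3)!!$). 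This is Theorem \ref{thm:nice}, whose proof via the graphical construction, binary trees, $T$-partitions and the measures $m_{T,t}$ occupies Sections \ref{section:graphical-construction}--\ref{section:full-iteration}; a Gronwall bound in $L^2$ cannot substitute for it, since it does not deliver the $p(t)$-norm on the right-hand side.

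Two further steps are also not correct as stated. First, your long-time regime assumes a decay $\norm{D_xP_tf}_2\lesssim e^{-\rho t}\norm{D_xf}_2$, which again presupposes commutation; without it the spectral gap only gives $\norm{D_xP_tf}_2\le 2e^{-\kappa t}\norm{f}_2$, useless after summing over $x\in\cG$. The paper avoids any tail estimate by truncating at $T=\tfrac{1}{2\rho}$ through $\Var_\mu(f)\le(1-e^{-2\kappa T})^{-1}\int_0^T2\cE(P_tf,P_tf)\,dt$. Second, your final conversion goes the wrong way: Talagrand's Lemma 2.5 gives $\norm{g}_\Phi^2\lesssim\norm{g}_2^2/\bigl(1+\log(\norm{g}_2/\norm{g}_1)\bigr)$, but the reverse inequality fails (take $g=M\ind_A+2^{-1/2}\ind_{A^c}$ with $\mu(A)=\log(e+M)/(2M^2)$: then $\norm{g}_\Phi\asymp1$ while the ratio is of order $\log M/\log\log M$). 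So the threshold-optimisation argument proves only the weaker Corollary \ref{cor:talagrand}, not \eqref{eq:talagrand-orlicz}. The paper instead integrates the hypercontractive bound over $t\in[0,T]$, substitutes $r=p(t)$, and invokes Lemma \ref{lemma:orlicz}, $\int_1^2\norm{g}_r^2\,dr\lesssim\norm{g}_\Phi^2$, to land directly on the Orlicz norm.
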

Talagrand's inequality of the form \eqref{eq:talagrand-original} is a direct consequence from \eqref{eq:talagrand-orlicz}, as was already noted in \cite{TALAGRAND:94}.
\begin{corollary}\label{cor:talagrand}
Under the assumptions of Theorem \ref{thm:talagrand},
 \begin{align}\label{eq:talagrand-corollary}
  \Var_\mu(f) \leq c_1 C \sum_{x\in\cG}\frac{\norm{D_x f}_2^2}{1+\log\left(\norm{D_x f}_2/\norm{D_x f}_1\right)},  
 \end{align}
 where $c_1$ is a universal constant and terms on the right hand side where $D_x f$ is almost surely constant are treated as 0.
\end{corollary}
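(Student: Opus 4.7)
The plan is to deduce \eqref{eq:talagrand-corollary} from \eqref{eq:talagrand-orlicz} via a universal pointwise comparison between the Orlicz norm and the ratio of $L^2$ and $L^1$ norms: for every $g\in L^2(\mu)$ with $\norm{g}_1>0$,
\begin{align*}
\norm{g}_\Phi^2 \leq c_1\,\frac{\norm{g}_2^2}{1+\log\bigl(\norm{g}_2/\norm{g}_1\bigr)}
\end{align*}
with a universal constant $c_1$. Applying this to each $g=D_x f$ and summing over $x\in\cG$ converts \eqref{eq:talagrand-orlicz} directly into \eqref{eq:talagrand-corollary}. The exceptional summands, in which $D_x f$ is almost surely constant, are consistent with the convention: by reversibility, \eqref{eq:Psi-invariant} gives $\int D_x f\,d\mu=0$, so an almost surely constant $D_x f$ must equal $0$, in which case both $\norm{D_x f}_\Phi^2$ and the right-hand side term vanish and may be dropped from the sum.

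To establish the comparison I would use a direct splitting argument. Set $b:=\norm{g}_2/\norm{g}_1\geq 1$ (Jensen) and take $a^2:=4\norm{g}_2^2/(1+\log b)$ as the candidate upper bound for $\norm{g}_\Phi^2$; it then suffices to verify $\int\Phi(g/a)\,d\mu\leq 1$. Introduce the threshold $T:=a/(2\norm{g}_1)=b/\sqrt{1+\log b}$ and split the integral at $\{|g|\leq aT\}$. On the low-value region, the bounds $\log(e+|g|/a)\geq 1$ and $(|g|/a)^2\leq T|g|/a$ combine to contribute at most $T\norm{g}_1/a=1/2$. On the high-value region, monotonicity yields $\log(e+|g|/a)\geq\log(e+T)$, so the contribution is at most $\norm{g}_2^2/(a^2\log(e+T))=(1+\log b)/(4\log(e+T))$.

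The entire argument then reduces to the elementary one-variable inequality $\log(e+b/\sqrt{1+\log b})\geq(1+\log b)/2$ for $b\geq 1$, which ensures the high-value contribution is also at most $1/2$. Substituting $\lambda:=1+\log b\geq 1$ turns this into $1+e^{\lambda-2}/\sqrt\lambda\geq e^{\lambda/2-1}$: the case $\lambda\leq 2$ is trivial (the right-hand side is $\leq 1$), and for $\lambda>2$ one checks by a short calculus argument that $e^{\lambda-2}/\sqrt\lambda-e^{\lambda/2-1}$ is nonnegative (it is positive at $\lambda=2$ and its derivative is eventually positive, so only a bounded interval requires verification). I do not anticipate any conceptual difficulty here; the corollary is a routine Orlicz-versus-$L^p$ manipulation in the spirit of \cite{TALAGRAND:94}, and the substance of the result lies entirely in Theorem~\ref{thm:talagrand}.
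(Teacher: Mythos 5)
Your overall strategy coincides with the paper's: the paper reduces \eqref{eq:talagrand-corollary} to \eqref{eq:talagrand-orlicz} by invoking Lemma~2.5 of \cite{TALAGRAND:94} as a black box, which is exactly the universal comparison $\norm{g}_\Phi^2 \leq c_1 \norm{g}_2^2 / \bigl(1+\log(\norm{g}_2/\norm{g}_1)\bigr)$ you set out to establish. What you do differently is prove that comparison from scratch by the split-at-a-threshold argument, which is a reasonable and self-contained alternative to citing Talagrand, and which correctly produces the constant $c_1=4$. The handling of the degenerate summands (using \eqref{eq:Psi-invariant} to show an a.s.\ constant $D_x f$ is zero) is also fine, and both regions of your splitting are estimated correctly.

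There is, however, a numerical slip in the parenthetical justification of the final one-variable inequality. You claim that $e^{\lambda-2}/\sqrt{\lambda}-e^{\lambda/2-1}$ is nonnegative for $\lambda>2$, ``positive at $\lambda=2$,'' but at $\lambda=2$ this quantity equals $1/\sqrt{2}-1<0$, and it remains negative on a neighborhood of $[2,3]$ (e.g.\ at $\lambda=3$ it is $e/\sqrt{3}-\sqrt{e}\approx-0.08$). So the verification strategy as stated does not go through. Fortunately the inequality you actually need is the weaker $1+e^{\lambda-2}/\sqrt{\lambda}\geq e^{\lambda/2-1}$ for $\lambda\geq1$, and that one is true: a direct check shows the left minus right side is bounded below by roughly $0.7$ on all of $[1,\infty)$ (it is $1/\sqrt{2}$ at $\lambda=2$, decreases slightly toward $\lambda\approx1.5$, and then increases monotonically), so the argument is salvageable; you simply need to verify $1+e^{\lambda-2}/\sqrt{\lambda}-e^{\lambda/2-1}\geq0$ directly rather than the stronger and false intermediate claim.
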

\begin{proof}
 The claim follows directly by virtue of Lemma 2.5 in \cite{TALAGRAND:94}, which states that
there is a universal constant $C'$ so that
 \[ \norm{f}_\Phi^2 \leq C' \frac{\norm{f}^2_2}{1+\log\left(\frac{\norm{f}_2}{\norm{f}_1}\right)} . \qedhere\]
\end{proof}
It is also possible to generalize Theorem \ref{thm:talagrand} to product spaces.
\begin{theorem}\label{thm:tensor-talagrand}
 Suppose we have, for $i=1,...,N$, the spaces $\Omega_i:=E_i^{\cG_i}$ with probability measures $\mu_i$ and dynamics generated by $L_i$ satisfying the conditions of Theorem \ref{thm:talagrand}. Let $\mu$ be the product measure of the $\mu_i$ on $\Omega=\Omega_1\times...\times\Omega_N$. Let $L=L_1+...+L_N$ be the generator of the product semigroup $P_t$. Let $f\in L^2(\mu)$ be a good function with constant $K<\infty$ (where $\sum_{x\in\cG}$ is replaced by $\sum_{i=1}^N\sum_{x\in\cG_i}$ in the definition of a good function). Then there exists a constant $C>0$ so that
 \begin{align}
 \Var_\mu(f) \leq C\sum_{i=1}^N\sum_{x\in\cG_i}\norm{D_x f}^2_\Phi.
 \end{align}
 The constant $C$ is of the same form as in Theorem \ref{thm:talagrand}, but with $\rho$ replaced by the minimum over the $\rho_i$ and $\abs{\cN}$ replaced by the maximum over the $\abs{\cN_i}$.
\end{theorem}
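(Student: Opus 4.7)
The plan is to reduce Theorem~\ref{thm:tensor-talagrand} to a direct application of Theorem~\ref{thm:talagrand} by recognising the product system as a single interacting particle system of the form required there.

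First I would identify $\cG := \cG_1 \sqcup \cdots \sqcup \cG_N$ and view $\Omega = \Omega_1 \times \cdots \times \Omega_N$ as a product indexed by $\cG$ with site-dependent state space $E_x = E_i$ for $x \in \cG_i$. (This mild generalisation of the single-$E$ setup does not affect the proof of Theorem~\ref{thm:talagrand}, and can be avoided by embedding into the Polish space $E := \bigsqcup_i E_i$.) For $x \in \cG_i$, I would take $\mu_{x,\eta}$ to be the update kernel of $L_i$ applied to the $i$-th factor of $\eta$. With this identification $L$ is of the form \eqref{eq:generator}; the interaction is finite range with neighbourhood of size at most $\max_i |\cN_i|$; and the operators $\Psi_x$ and $D_x$ of the product system coincide for $x \in \cG_i$ with the corresponding operators of the $i$-th component. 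Consequently, goodness of $f$ in the product sense translates directly into goodness of $f$ with the same constant $K$ for $(L,\mu)$.

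The substantive step is to establish that $(L,\mu)$ satisfies the log-Sobolev inequality \eqref{eq:log-Sobolev} with constant $\rho := \min_i \rho_i$. Here I would invoke the classical tensorisation principle, based on the subadditivity of entropy under product measures: writing $\Ent^{(i)}(g)$ for the entropy of $g$ in the $i$-th coordinate with the others held fixed,
\begin{align}
\Ent_\mu(g) \leq \sum_{i=1}^N \int \Ent^{(i)}(g)\,d\mu.
\end{align}
Applied to $g = f^2$, combined with the single-component log-Sobolev inequalities for $(L_i,\mu_i)$ with constants $\rho_i \geq \rho$, and using Fubini together with the self-adjointness of each $L_i$ on $L^2(\mu_i)$, this yields
\begin{align}
\Ent_\mu(f^2) \leq \sum_{i=1}^N \tfrac{2}{\rho_i}\,\cE_i(f,f) \leq \tfrac{2}{\rho}\,\cE(f,f),
\end{align}
where $\cE_i(f,f) := -\int f L_i f\,d\mu$ and $\cE(f,f) = \sum_i \cE_i(f,f)$.

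With all hypotheses of Theorem~\ref{thm:talagrand} now in place for $(L,\mu)$ with parameters $K$, $\max_i |\cN_i|$ and $\min_i \rho_i$, invoking that theorem gives $\Var_\mu(f) \leq C \sum_{x \in \cG} \norm{D_x f}_\Phi^2$, which is the claimed inequality once $\cG$ is unpacked as a disjoint union. I do not anticipate a serious obstacle: the one nontrivial ingredient is the log-Sobolev tensorisation, which is classical, and the rest of the argument is essentially bookkeeping to verify that the product system genuinely fits the single-system framework of Theorem~\ref{thm:talagrand}.
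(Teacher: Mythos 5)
Your tensorisation of the log-Sobolev inequality (entropy subadditivity, constant $\min_i\rho_i$) is correct and is exactly the first step of the paper's own proof. The gap is in the second step, where you invoke Theorem \ref{thm:talagrand} as a black box for the index set $\cG=\cG_1\sqcup\cdots\sqcup\cG_N$. Theorem \ref{thm:talagrand} is stated for $\Omega=E^{\cG}$ with $\cG$ a countable \emph{group}: the finite-range hypothesis is phrased via the translates $x+\cN$, and, more importantly, the proof through Theorem \ref{thm:nice} uses the group structure in an essential way. The iteration functional $I(T,v,S)$ of Section \ref{section:iteration-steps} is built from translated label maps $x+X$ with $X\in\cX_T$, the proof of Lemma \ref{lemma:interior} reindexes the sum over $x\in\cG$ by the shift $x\mapsto x+X(v)$ and absorbs the shift into the supremum over $\cX_T$, and Lemma \ref{lemma:leaf} inserts new labels $x+y$, $y\in\cN$. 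None of these operations is defined on the disjoint union, which is not a group; your embedding $E:=\bigsqcup_i E_i$ repairs only the state space, not the index set. So the parenthetical claim that this is a mild generalisation ``not affecting the proof'' misidentifies where the difficulty lies, and the assertion that the interaction is ``finite range with neighbourhood of size at most $\max_i\abs{\cN_i}$'' silently replaces the single translate $x+\cN$ of the paper by site-dependent neighbourhoods, which is already outside the hypotheses of Theorem \ref{thm:talagrand}.

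This is precisely why the paper states that Theorem \ref{thm:talagrand} cannot be applied directly and instead reruns its proof with one modification: $I$ is redefined component-wise as $\sum_{i=1}^N\sup_{X\in\cX_T(\cG_i)}\sum_{x\in\cG_i}\normb{D_x\PsiSX[x+X](v,\infty)f}^2_{p(\uS(v))}$, so that every translation occurs inside a single group $\cG_i$, with $\abs{\cN}$ and $\rho$ replaced by $\max_i\abs{\cN_i}$ and $\min_i\rho_i$; with this definition Lemmas \ref{lemma:interior} and \ref{lemma:leaf} and the iteration of Section \ref{section:full-iteration} go through verbatim. Note also that even in the special case where all $\cG_i$ and $E_i$ coincide, so that one could force a genuine group structure on the disjoint union, a literal application of Theorem \ref{thm:talagrand} would require the common neighbourhood $\bigcup_i\cN_i$ and would yield a constant depending on $\abs{\bigcup_i\cN_i}$ rather than $\max_i\abs{\cN_i}$, weaker than the stated conclusion. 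Your outline therefore needs to be completed by this adapted definition of $I$ (or an equivalent argument inside the proof of Theorem \ref{thm:nice}) rather than by a citation of Theorem \ref{thm:talagrand}.
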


If one compares \eqref{eq:Poincare}, \eqref{eq:log-Sobolev}, \eqref{eq:talagrand-orlicz} and \eqref{eq:talagrand-corollary} one observes that we have the following chain of implications:
\[ \eqref{eq:log-Sobolev} \Rightarrow \eqref{eq:talagrand-orlicz} \Rightarrow \eqref{eq:talagrand-corollary}  \Rightarrow \eqref{eq:Poincare}. \]
It turns out that the first implication is in fact an equivalence. That is, not only does a log-Sobolev inequality  imply Talagrand's inequality, but the converse is also true when using the version with the Orlicz norm.
\begin{theorem}\label{thm:reverse-talagrand}
Let $L$ be of the form \eqref{eq:generator}.
 Suppose that a function $f:\Omega\to\bR$ satisfies 
 \begin{align}\label{eq:reverse-talagrand-assumption}
 \Var_\mu(f) \leq C \sum_{x\in\cG} \norm{D_x f}^2_\Phi
 \end{align}
 for some $C\geq 1$. Then
 \[ \Ent_\mu(f) \leq c_2 C \cE(f,f), \]
 where $c_2$ is a universal constant.
\end{theorem}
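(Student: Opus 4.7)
The plan is to chain three standard estimates: the Orlicz norm is dominated by the $L^2$-norm, the $L^2$-norms of the derivatives $D_x f$ are controlled by the Dirichlet form via Jensen's inequality, and the entropy of a probability density is bounded by its chi-squared distance from equilibrium. Since $\Ent_\mu(f)$ is only defined for $f\ge 0$, and the hypothesis \eqref{eq:reverse-talagrand-assumption} is scale-invariant in $f$, I reduce to the case $f\ge 0$ with $\int f\,d\mu=1$; the general case then follows by applying the result to $f/\int f\,d\mu$.

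First, because $\Phi(x)=x^2/\log(e+|x|)\le x^2$, plugging $a=\norm{g}_2$ into \eqref{eq:Orlicz-norm} gives $\int\Phi(g/a)\,d\mu\le 1$, whence $\norm{g}_\Phi\le\norm{g}_2$ for every $g\in L^2(\mu)$; the hypothesis thus strengthens to $\Var_\mu(f)\le C\sum_{x\in\cG}\norm{D_xf}_2^2$. Second, Jensen's inequality applied to the probability kernel $\Psi_x$ gives pointwise
\[
(D_xf(\eta))^2=\bigl(\Psi_x(f-f(\eta))(\eta)\bigr)^2\le\Psi_x\bigl((f-f(\eta))^2\bigr)(\eta);
\]
integrating against $\mu$, summing over $x$, and invoking the representation of $\cE(f,f)$ recorded immediately after \eqref{eq:Psi-invariant}, one obtains $\sum_{x\in\cG}\norm{D_xf}_2^2\le 2\,\cE(f,f)$. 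Third, the elementary inequality $t\log t\le t^2-t$ (valid for $t\ge 0$, with equality at $t\in\{0,1\}$, and proved via $\log t\le t-1$) applied pointwise to $f$ under $\int f\,d\mu=1$ yields
\[
\Ent_\mu(f)=\int f\log f\,d\mu\le\int(f^2-f)\,d\mu=\Var_\mu(f).
\]
Concatenating the three estimates gives $\Ent_\mu(f)\le 2C\,\cE(f,f)$, so the theorem holds with the universal constant $c_2=2$.

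There is no substantive obstacle---each step is a one-line textbook estimate. The interesting structural point is that the Orlicz refinement of the hypothesis plays no role in this direction: the trivial bound $\norm{\cdot}_\Phi\le\norm{\cdot}_2$ already reduces \eqref{eq:reverse-talagrand-assumption} to a plain $L^2$-Poincaré-type statement of the form $\Var_\mu(f)\le C\sum_{x}\norm{D_xf}_2^2$, which is itself enough to deduce the entropy bound. The logarithmic sharpening inside $\Phi$ is essential only for the converse, considerably harder implication (Theorem \ref{thm:talagrand}), where one must recover a log-Sobolev inequality from a mere control of the variance.
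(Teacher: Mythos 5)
Your chain of estimates is internally almost correct but it proves a different and much weaker statement than the theorem intends. As the Corollary following the theorem and the paper's own proof make clear, the conclusion to be established is the log-Sobolev inequality $\Ent_\mu(f^2)\leq c_2C\,\cE(f,f)$ (the ``$\Ent_\mu(f)$'' in the statement is shorthand for this; the literal inequality cannot hold for all nonnegative $f$, since entropy is $1$-homogeneous while $\cE$ is $2$-homogeneous). Your argument discards the Orlicz norm via $\norm{\cdot}_\Phi\leq\norm{\cdot}_2$, reducing the hypothesis to a Poincar\'e-type bound $\Var_\mu(f)\leq 2C\,\cE(f,f)$, and then uses $t\log t\leq t^2-t$ to get $\Ent_\mu(f)\leq\Var_\mu(f)$ for densities with $\int f\,d\mu=1$. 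That is equivalent in strength to a spectral-gap estimate and cannot yield log-Sobolev: there are standard examples with a spectral gap but no log-Sobolev constant, so under your reading the equivalence asserted in the Corollary would be false. Moreover, your normalization step is itself invalid: applying your bound to $f/\int f\,d\mu$ gives $\Ent_\mu(f)\leq 2C\,\cE(f,f)/\int f\,d\mu$, not $\Ent_\mu(f)\leq 2C\,\cE(f,f)$; the homogeneity mismatch you run into there is precisely the signal that the intended claim concerns $\Ent_\mu(f^2)$.

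The missing ingredients are the ones the paper uses (following Bobkov--Houdr\'e). One starts from the Bobkov--G\"otze bound $\Ent_\mu(f^2)\leq\frac{13}{4}\norm{f-\int f\,d\mu}_\varphi^2$ with $\varphi(x)=x^2\log(1+x^2)$, so after normalizing $\int f\,d\mu=0$ and $\cE(f,f)=1/(2C)$ it suffices to bound $\int\varphi(f)\,d\mu=\int g^2\,d\mu$ for the auxiliary function $g=f\sqrt{\log(1+f^2)}$. The hypothesis \eqref{eq:reverse-talagrand-assumption} is then applied to $g$ (so it must be available for a whole class of functions, as in the Corollary, not just for the single $f$), and $\sum_{x\in\cG}\norm{D_xg}_\Phi^2$ is controlled by $\cE(f,f)$ via a mean-value (chain-rule) estimate for $g$, the transfer Lemma \ref{lemma:Orlicz-Dx}, the Orlicz--H\"older inequality $\norm{uv}_\Phi\leq 24\norm{u}_{e^{x^2}-1}\norm{v}_2$ of Lemma \ref{lemma:Orlicz-Phi-Holder}, and an exponential-moment bound for $\sqrt{\log(1+f^2)}$. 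It is exactly at this point that the logarithmic weight in $\Phi$ is indispensable: with plain $L^2$ norms on the right-hand side of \eqref{eq:reverse-talagrand-assumption} the bound $\sum_{x\in\cG}\norm{D_xg}^2\lesssim\cE(f,f)$ fails, so your closing remark that the Orlicz refinement ``plays no role in this direction'' has it exactly backwards---it is essential here, while Theorem \ref{thm:talagrand} is the opposite implication (log-Sobolev implies Talagrand), not the one recovering log-Sobolev.
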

\begin{corollary}
 Let $L$ be of the form \eqref{eq:generator} with finite range. Assume that all $f\in L^2(\mu)$ are good with the same constant $K<\infty$. Then the following two statements are equivalent:
 \begin{enumerate}
  \item $(L,\mu)$ satisfies a log-Sobolev inequality with constant $\rho>0$;
  \item there is a constant $C>0$ so that \eqref{eq:talagrand-orlicz} is satisfied for all $f\in L^2(\mu)$.
 \end{enumerate}
\end{corollary}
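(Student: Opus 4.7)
The corollary combines the two preceding theorems, one for each direction.

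For (a)$\Rightarrow$(b), I would invoke Theorem~\ref{thm:talagrand} directly: the hypotheses — finite-range generator of the form \eqref{eq:generator}, uniform goodness of every $f\in L^2(\mu)$ with constant $K$, and the log-Sobolev constant $\rho$ — are precisely those required to conclude \eqref{eq:talagrand-orlicz} with a constant $C = C(\rho, K, \abs{\cN})$.

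For (b)$\Rightarrow$(a) the plan is to apply Theorem~\ref{thm:reverse-talagrand}. Since \eqref{eq:talagrand-orlicz} holds for every $f\in L^2(\mu)$, applying the theorem with $f$ replaced by $f^2$ (first for $f$ bounded, then extending by a standard density/truncation argument) yields
\[ \Ent_\mu(f^2) \;\leq\; c_2 C \, \cE(f^2, f^2). \]
To recover the log-Sobolev inequality \eqref{eq:log-Sobolev} it then remains to replace $\cE(f^2,f^2)$ by $\cE(f,f)$ on the right-hand side, at the cost of only a multiplicative constant.

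This last comparison is the main obstacle. The naive bound $(a^2 - b^2)^2 = (a + b)^2(a - b)^2 \leq 2(a^2 + b^2)(a - b)^2$ yields only $\cE(f^2, f^2) \leq 4 \supnorm{f}^2 \, \cE(f, f)$, i.e.\ a defective rather than a true log-Sobolev inequality. To strip the $\supnorm{f}^2$ factor I would exploit the jump identity $D_x(f^2) = 2 f \, D_x f + \Psi_x[(f(\xi) - f(\eta))^2]$, together with the uniform goodness hypothesis: the linear piece is controlled by Cauchy--Schwarz against $\cE(f,f)$, while the quadratic remainder is already of order $\cE(f,f)$ in the form of the paper's Dirichlet form and can be absorbed by Rothaus' tightening trick, using that \eqref{eq:talagrand-orlicz} already implies a Poincar\'e inequality. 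Verifying that the goodness constant $K$ suffices to close this absorption cleanly — and justifying the passage from bounded $f$ to general $f\in L^2(\mu)$ — is where I would expect the bulk of the technical work to lie.
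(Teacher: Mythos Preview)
Your direction (a)$\Rightarrow$(b) is exactly right and matches the paper: the corollary has no separate proof, it is an immediate consequence of Theorems~\ref{thm:talagrand} and~\ref{thm:reverse-talagrand}.

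For (b)$\Rightarrow$(a) you have been misled by a typo in the statement of Theorem~\ref{thm:reverse-talagrand}. Look at the last line of its proof: what is actually established is
\[
\Ent_\mu(f^2)\;\leq\;\tfrac{13}{4}\,700^4\,C\,\cE(f,f),
\]
i.e.\ the log-Sobolev inequality itself, not $\Ent_\mu(f)\leq c_2C\,\cE(f,f)$. With that correction, (b)$\Rightarrow$(a) is a one-line citation of Theorem~\ref{thm:reverse-talagrand}; no substitution $f\mapsto f^2$ and no comparison of $\cE(f^2,f^2)$ with $\cE(f,f)$ is needed. (Note also that the hypothesis of Theorem~\ref{thm:reverse-talagrand} is really used for \emph{two} functions in the proof, $f$ and $g=f\sqrt{\log(1+f^2)}$, which is why one needs \eqref{eq:talagrand-orlicz} for all of $L^2(\mu)$, as in (b).)

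Your proposed workaround would not close in any case. The obstacle you identify is real: from $\Ent_\mu(f^2)\leq c\,\cE(f^2,f^2)$ one only gets $\Ent_\mu(f^2)\leq 4c\,\|f\|_\infty^2\,\cE(f,f)$, and the Rothaus tightening lemma removes \emph{additive} defects of the form $+\,b\|f\|_2^2$, not a \emph{multiplicative} $\|f\|_\infty^2$ factor. Centering does not help, since $\|f-\int f\,d\mu\|_\infty$ is still of order $\|f\|_\infty$. The goodness constant $K$ plays no role here. The paper's actual mechanism inside Theorem~\ref{thm:reverse-talagrand} is different: it applies \eqref{eq:reverse-talagrand-assumption} to $g=f\sqrt{\log(1+f^2)}$, uses the Bobkov--G\"otze bound $\Ent_\mu(f^2)\lesssim\|f-\int f\,d\mu\|_\varphi^2$ with $\varphi(x)=x^2\log(1+x^2)$, and controls $\|D_xg\|_\Phi$ via an Orlicz-space H\"older inequality (Lemma~\ref{lemma:Orlicz-Phi-Holder}).
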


In \cite{CORDERO-ERAUSQUIN:LEDOUX:12} the proof of Talagrand's inequality consists of two key ingredients, namely hypercontractivity and a permutation relation between $D_x$ and $P_t$. We will also use these two ingredients. However, the original permutation relation,
\begin{align}\label{eq:commutation}
D_x P_t f \leq e^{Kt}P_t D_x f,\qquad 0\leq t\leq t_0, 
\end{align}
cannot hold in the context of interacting particle systems, as the following argument shows. Let $E=\{0,1\}$, and $f(\eta)=\eta(x)$ for some $x\in\cG$. Then $D_y f = 0$ for all $y\neq x$. But $D_y P_t f \neq 0$ for typical dynamics (with independent spin flips being an exception). Hence we need an alternative to \eqref{eq:commutation} which respects the space-time structure generated by the Markov process. Most of the paper is devoted  prove the following commutation property between the semi-group $P_t$ and the derivative operator $D_x$: 
\begin{theorem}\label{thm:nice}
 Assume that the generator $L$ is of the form \eqref{eq:generator} and the interaction is finite range. Then
\begin{align}\label{eq:nice-thm}
 \sum_{x\in\cG} \norm{D_x P_t f}^2_2 \leq \widetilde C 2^{ t}\sum_{x\in\cG}\norm{D_x f}^2_{p(t)}, 
\end{align}
with $\widetilde C = 2e^{72 \abs{\cN}^2(1+\abs{\cN})^2}$ and $p(t)=p(t,2) = 1+e^{-2\rho t}$. The constant $\rho$ is again the log-Sobolev constant. However $\rho=0$ is admitted as well, in which case we have the $L^2$-norm on both sides of \eqref{eq:nice-thm}. 
\end{theorem}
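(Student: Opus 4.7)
The plan is to make up for the failure of the pointwise commutation between $D_x$ and $P_t$ in the dependent setting by integrating the commutator via Duhamel's principle, reducing it to an $|\cN|$-bounded sum of ``nearby'' derivatives using the finite range of $L$, and concluding with hypercontractivity of the semigroup.

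The starting point is the Duhamel identity
\begin{align}
 D_x P_t f \;=\; P_t D_x f \;+\; \int_0^t P_{t-s}\,[D_x,L]\,P_s f\,ds,
\end{align}
obtained by differentiating $s\mapsto P_{t-s}D_x P_s f$. Writing $L=\sum_y L_y$ with $L_y=\Psi_y-I=D_y$, the finite-range assumption forces $[D_x,L_y]=0$ unless the neighbourhoods $x+\cN$ and $y+\cN$ overlap, i.e.\ $y\in x+(\cN-\cN)$, so only $O(|\cN|(1+|\cN|))$ terms contribute. The core algebraic step is then to expand each non-vanishing $[D_x,L_y]P_s f$ pointwise and dominate it in $L^2$ by a bounded number of quantities of the form $\Psi_\bullet D_w P_s f$ with $w$ in a neighbourhood of $x$; by careful bookkeeping this is where the exponent $72|\cN|^2(1+|\cN|)^2$ in $\widetilde C$ arises.

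Plugging this expansion into the Duhamel identity, combining with $\|\Psi_\bullet g\|_2\leq\|g\|_2$ and the $L^2$-contractivity of $P_{t-s}$, summing over $x$ and applying a Schur bound on the band operator with at most $|\cN|^2$ non-zero entries per row then yields an integral inequality of the form
\begin{align}
 u(t) \;\leq\; C_0\,v(t) \;+\; C_1 \int_0^t u(s)\,ds,
\end{align}
with $u(t):=\sum_x\|D_x P_t f\|_2^2$, $v(t):=\sum_x\|P_t D_x f\|_2^2$, and $C_0,C_1$ depending only on $|\cN|$. A Gronwall argument together with hypercontractivity $\|P_s D_x f\|_2\leq\|D_x f\|_{p(s)}$ and the monotonicity $p(s)\geq p(t)$ for $s\leq t$ then produces the desired $t$-dependent bound. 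The specific $2^t$ factor (as opposed to a generic $e^{C_1 t}$) can be recovered by absorbing the $|\cN|$-dependent constant entirely into the $t$-independent prefactor $\widetilde C$ via a time-discretisation into unit-length steps, each step at most doubling the effective $\ell^2$-norm of the derivative field.

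The main obstacle is the algebraic expansion of $[D_x,L_y]P_s f$ into a bounded linear combination of $\Psi_\bullet D_w P_s f$ that survives summation over $x$ without producing cascading terms: one must exploit the commutation of $\Psi_x$ and $\Psi_y$ when their neighbourhoods are disjoint and carefully track the residual terms so as to absorb all combinatorial factors into constants depending only on $|\cN|$. The degenerate case $\rho=0$ is a useful consistency check, since then $p(t)\equiv 2$ and the whole argument reduces to the pure $L^2$ Gronwall bound stated in the theorem.
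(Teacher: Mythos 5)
The Duhamel reduction and the algebraic step are sound: $[D_x,L_y]=[\Psi_x,\Psi_y]$ vanishes except for finitely many $y$ near $x$, and $[\Psi_x,\Psi_y]g=\Psi_x D_y g-\Psi_y D_x g+D_x g-D_y g$ gives $\norm{[\Psi_x,\Psi_y]g}_q\le 2\norm{D_xg}_q+2\norm{D_yg}_q$, so a closed integral inequality for $u(s)=\sum_x\norm{D_xP_sf}_2^2$ does follow. The genuine gap is in your last step. Hypercontractivity enters your argument only through $\norm{P_sD_xf}_2\le\norm{D_xf}_{p(s)}$, and since $p$ is decreasing, $p(s)\ge p(t)$ for $s\le t$ means $\norm{D_xf}_{p(s)}\ge\norm{D_xf}_{p(t)}$: the monotonicity you invoke points in the wrong direction. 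Unwinding the Gronwall inequality, the inhomogeneous term is controlled by $\sup_{s\le t}\sum_x\norm{D_xf}^2_{p(s)}$, which already at $s=0$ equals $\sum_x\norm{D_xf}_2^2$. So as written your scheme proves only an $L^2$-to-$L^2$ bound, i.e.\ the $\rho=0$ case of \eqref{eq:nice-thm}; it loses exactly the hypercontractive improvement from exponent $2$ to $p(t)<2$ that the theorem asserts, and with an $L^2$ right-hand side the downstream application in Section \ref{section:talagrand} degenerates from Talagrand's inequality to the Poincar\'e inequality.

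To rescue the Duhamel route you would have to thread hypercontractivity through the integral term as well, not only through the leading term: using \eqref{eq:hypercontractivity} on each outer factor, $\norm{P_{s-r}h}_{p(t-s)}\le\norm{h}_{p(t-r)}$, one should run the Gronwall argument for the exponent-matched quantity $\sum_x\norm{D_xP_sf}^2_{p(t-s)}$ (norm exponent tied to the \emph{remaining} time) rather than for $u(s)$; one must also justify the commutator identity for the infinite-volume generator on a core. This is a substantive modification, not a rewording. The paper avoids operator commutators altogether: it writes $P_t$ through the graphical construction (Lemma \ref{lemma:graphical-construction-b}), and in Lemma \ref{lemma:leaf} conditions on whether the Poisson process has points in $(x+\cN)$ times the current time slice; on the complement $D_x$ commutes exactly with the resampling operator and hypercontractivity is harvested slice by slice, while the small-probability event spawns a higher-order error term, organized via full binary trees and the measures $m_{T,t}$, whose masses $t^{2n-2}/(2n-3)!!$ together with Catalan-number counting sum to the factor $2e^{72\abs{\cN}^2(1+\abs{\cN})^2t^2}$ for $t\le1$. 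Your final unit-step composition to get the $2^t$ factor does match the paper's last step \eqref{eq:selfimprove}, but it only becomes available once the $t\le1$ bound with the $p(t)$-norm on the right-hand side has actually been established.
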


The proof of Theorem \ref{thm:nice} is at its core a recursive strategy. We will show that $\sum_{x\in\cG} \norm{D_x P_t f}^2_2$ can be estimated against $\sum_{x\in\cG}\norm{D_x f}^2_{p(t)}$ plus an error term of order 1. This error term can be estimated against $\sum_{x\in\cG}\norm{D_x f}^2_{p(t)}$ as well, but in doing so we introduce two error terms of order 2. This will then be iterated. The entire procedure is rather technical and is therefore done in the last sections of the paper. 

\subsection{On Influences and Russo's formula} \label{section:influences}
An important application of Talagrand's inequality is the study of the sensitivity of events to changes at a single site. We can obtain the same type of estimates as in \cite{TALAGRAND:94} even in the context of dependent random variables. To this end we need to generalize is Russo's formula. 

In this section we restrict ourselves to $E=\{0,1\}$. We say a subset $A\subset\Omega=E^\cG$ is increasing, if $\eta\in A$ and $\xi\geq\eta$ (coordinate wise) implies $\xi\in A$. Denote by $A_x$ the event $\{\eta\in A, \eta^x\not\in A\}$, where $\eta^x$ is the configuration $\eta$ flipped at $x$, that is a 1 at $x$ is replaced by a 0 and vice versa.

Let $\mu_p$, $p\in[a,b]$, be a family of measures on $\Omega$. We assume that the corresponding dynamics given by the generator $L_p$ from \eqref{eq:generator} are the heat bath Glauber dynamics. That is,
\begin{align}\label{eq:mu-is-glauber}
\mu^p_{x,\xi}(1) = \mu_p\left( \eta(x)=1 \;\middle|\;\eta=\xi\text{ off }x\right).
\end{align}
We also assume that the map $p\mapsto\mu_{x,\eta}^p(1)$ is increasing and differentiable for all $x\in\cG,\eta\in\Omega$, and that all $\mu_{x,\eta}^p$ are finite range with the same neighborhood $\cN$.

\begin{theorem}[Russo's formula for dependent random variables]\label{thm:Russo}
Write $\beta_p:=\inf_{\eta\in\Omega,x\in\cG}\frac{d}{dp}\mu^p_{x,\eta}(1)$. Then, for any $p\in]a,b[$ and any increasing event $A$,
\begin{align}\label{eq:Russo-formula}
\frac{d}{dp}\mu_p(A)  &\geq \beta_p\sum_{x\in\cG}\frac{ \mu_p(A_x)}{	\sup_{\zeta\in\Omega}\mu^p_{x,\zeta}(1)} \geq \beta_p\sum_{x\in\cG}\mu_p(A_x)
\end{align}
\end{theorem}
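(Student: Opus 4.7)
The second inequality in \eqref{eq:Russo-formula} is trivial because $\sup_\zeta\mu^p_{x,\zeta}(1)\le 1$, so the real content is the first inequality. As a preliminary reduction, since $A$ is increasing and $E=\{0,1\}$, the event $\eta\in A_x$ is equivalent to $\eta(x)=1$ together with $\eta_{-x}\in P_x:=\{\zeta:(\zeta,1)\in A,(\zeta,0)\notin A\}$; by the DLR characterisation \eqref{eq:mu-is-glauber} we obtain
\[\mu_p(A_x)=\int_{P_x}\mu^p_{x,\zeta}(1)\,d\mu_p\!\upharpoonright_{-x}(\zeta)\le\sup_\zeta\mu^p_{x,\zeta}(1)\cdot\mu_p(P_x),\]
so it suffices to prove $\tfrac{d}{dp}\mu_p(A)\ge\beta_p\sum_x\mu_p(P_x)$.

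My plan is to construct, for $h>0$, a coupling $\pi_h$ of $\mu_p$ and $\mu_{p+h}$ with $\eta\le\eta'$ almost surely, whose conditional law of $(\eta(x),\eta'(x))$ given $\eta_{-x}=\eta'_{-x}=\zeta$ is the Fr\'echet--Hoeffding comonotonic coupling of the two heat-bath Bernoullis $\mu^p_{x,\zeta}$ and $\mu^{p+h}_{x,\zeta}$. The natural construction runs both heat-bath Glauber dynamics through one graphical representation with shared Poisson clocks and uniform marks on $[0,1]$: at an update time at site $x$ one sets $\eta(x):=\ind_{U\le\mu^p_{x,\eta_{-x}}(1)}$ and $\eta'(x):=\ind_{U\le\mu^{p+h}_{x,\eta'_{-x}}(1)}$, and the monotone dependence $p\mapsto\mu^p_{x,\eta}(1)$ is exploited to propagate the partial order $\eta\le\eta'$ from a common initial configuration into equilibrium.

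Granting such a coupling, the increasing property of $A$ yields $\mu_{p+h}(A)-\mu_p(A)=\pi_h(\eta\notin A,\eta'\in A)$, which I lower bound by the sum of pairwise-disjoint single-site pivotal contributions $\pi_h(\eta_{-x}=\eta'_{-x}\in P_x,\,\eta(x)=0,\,\eta'(x)=1)$. The comonotonic single-site structure gives
\[\pi_h\bigl(\eta(x)=0,\,\eta'(x)=1\mid\eta_{-x}=\eta'_{-x}=\zeta\bigr)=\mu^{p+h}_{x,\zeta}(1)-\mu^p_{x,\zeta}(1)\ge h\beta_p+o(h).\]
Summing over $x$, dividing by $h$, and sending $h\downarrow 0$ (using that $\pi_h$ collapses to the identity coupling, so $\pi_h(\eta_{-x}=\eta'_{-x}\in P_x)\to\mu_p(P_x)$) yields $\tfrac{d}{dp}\mu_p(A)\ge\beta_p\sum_x\mu_p(P_x)$, and with the pivotal identity above this is \eqref{eq:Russo-formula}.

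\textbf{Main obstacle.} The principal difficulty is the construction of the monotone coupling without any attractiveness assumption on $\eta_{-x}\mapsto\mu^p_{x,\eta}(1)$: the graphical construction with shared uniforms immediately preserves $\eta\le\eta'$ only when the specification is monotone in the configuration as well as in $p$. In the general heat-bath setting one must either extract attractivity from the reversibility and finite-range hypotheses, or build the coupling through a Strassen-type existence argument combined with a conditional rearrangement that enforces the comonotonic single-site structure used above. Reconciling stochastic monotonicity of $p\mapsto\mu_p$ with the explicit single-site pivotal extraction is the technical heart of the proof.
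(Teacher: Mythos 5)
Your overall strategy (a monotone coupling of the measures at two nearby parameters, extraction of single-site pivotal contributions, and monotonicity in $p$ of the heat-bath conditionals to produce the factor $\beta_p$) is the same as the paper's, but the proposal has a genuine gap exactly where you flag it: the coupling you need is never constructed, and it is strictly stronger than what mere stochastic domination provides. You require a coupling $\pi_h$ with $\eta\le\eta'$ a.s.\ \emph{and} with the property that, conditionally on $\eta_{-x}=\eta'_{-x}=\zeta$, the pair $(\eta(x),\eta'(x))$ is the comonotone coupling of $\mu^p_{x,\zeta}$ and $\mu^{p+h}_{x,\zeta}$ (in particular the conditional marginals given information about \emph{both} configurations off $x$ must still be the single-spin heat-bath kernels). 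The shared-uniform graphical construction you suggest preserves the order only under attractivity of the specification in the configuration variable, which the paper does not assume (only monotonicity in $p$ is assumed), and neither reversibility nor finite range supplies it; a Strassen argument gives only \emph{some} monotone coupling, with no control on its single-site conditional structure. Since the whole quantitative step $\pi_h(\eta(x)=0,\eta'(x)=1\mid\eta_{-x}=\eta'_{-x}=\zeta)=\mu^{p+h}_{x,\zeta}(1)-\mu^p_{x,\zeta}(1)$ rests on that unconstructed structure, the proof as written does not go through. A second, smaller unjustified step is the passage to the limit: for an abstract coupling it is not automatic that $\pi_h(\eta_{-x}=\eta'_{-x}\in P_x)\to\mu_p(P_x)$ as $h\downarrow0$.

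The paper avoids demanding any refined conditional structure of the coupling. It takes an arbitrary monotone coupling of $\mu_{p-\epsilon}$ and $\mu_p$, fixes a finite window $\Lambda$, and works with the disagreement set $\Delta=\{x:\eta(x)\ne\xi(x)\}$, using the elementary bound $\ind_{x\in\Delta}|\Delta\cap\Lambda|^{-1}\ge\ind_{\Delta\cap\Lambda=\{x\}}$ to isolate single-site contributions without requiring the disjoint-event decomposition you use. The probability of the isolated event is then lower bounded by conditioning on the configuration off $x$ and invoking \eqref{eq:mu-is-glauber} together with $\eta\ge\xi$, which yields the difference $\mu^p_{x,\zeta}(1)-\mu^{p-\epsilon}_{x,\zeta}(1)\ge\beta_p\epsilon-o(\epsilon)$; the residual factor (no other disagreement inside $\Lambda$) is handled by the uniform $1-o(1)$ estimate \eqref{eq:russo-2}, and the proof closes by dividing by $\epsilon$, letting $\epsilon\to0$, and only then letting $\Lambda\uparrow\cG$. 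If you want to salvage your argument, you should replace the demanded comonotone coupling by this window-and-disagreement bookkeeping, which needs nothing beyond the existence of a monotone coupling.
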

\begin{remark}
In the case of Bernoulli product measures $\nu_p$, \eqref{eq:Russo-formula} simplifies to
$\frac{d}{dp}\nu_p(A) \geq \frac{1}{p}\sum_{x\in\cG}\nu_p(A_x)$,
which is in fact an equality and is the original form of Russo's formula.
\end{remark}

\begin{corollary}\label{cor:influences1}
Fix an increasing event $A\subset\Omega$. Let $\rho_p$ denote the log-Sobolev constant of $(L_p,\mu_p)$, which is assumed to be positive. Write $\delta_p:=\sup\{\mu^p_{x,\eta}(A_x) : \eta\in\Omega,x\in\cG\}$ and $\alpha_p:=\inf\left\{\mu^p_{x,\eta}(e) : \eta\in\Omega,x\in\cG, e\in\{0,1\}\right\}$. If $\delta_p<e^2\alpha_p^2$, then
\begin{align}
\frac{d}{dp}\mu_p(A) \geq \frac{\beta_p\log\left(e^2\alpha_p^2\delta_p^{-1}\right)}{4c_1C}\mu_p(A)(1-\mu_p(A)),
\end{align}
where $c_1$ and $C=C(\alpha_p^{-3},\abs{\cN},\rho_p)$ are as in Corollary \ref{cor:talagrand} and $\beta_p$ defined in Theorem \ref{thm:Russo}. 
\end{corollary}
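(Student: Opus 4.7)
The plan is to apply Corollary~\ref{cor:talagrand} to $f=\ind_A$ and combine the resulting variance estimate with the Russo-type bound of Theorem~\ref{thm:Russo}. Because $E=\{0,1\}$ is finite and $\alpha_p>0$, Lemma~\ref{lemma:IPS-K-condition} ensures that $\ind_A$ is good with constant $\alpha_p^{-3}$, so Corollary~\ref{cor:talagrand} is available with the constant $C=C(\alpha_p^{-3},|\cN|,\rho_p)$ that appears in the claim. All the work then reduces to making the Talagrand bound explicit for $f=\ind_A$.

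The key step, and the main obstacle, is to compute $\|D_x\ind_A\|_2$ and $\|D_x\ind_A\|_1$ in closed form. Write $\sigma$ for $\eta$ off $x$ and let $\partial A:=\{\sigma:\sigma\cup\{1\}\in A,\;\sigma\cup\{0\}\notin A\}$ denote the set of pivotal configurations. Because $A$ is increasing and $\mu^p_{x,\eta}$ depends on $\eta$ only through $\sigma$, I expect $D_x\ind_A$ to vanish off $\{\sigma\in\partial A\}$ and there to equal $-\mu^p_{x,\sigma}(0)$ on the set $A_x=\{\sigma\in\partial A,\eta(x)=1\}$ and $\mu^p_{x,\sigma}(1)$ on $\{\sigma\in\partial A,\eta(x)=0\}$. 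Disintegrating by means of the heat-bath identity $\int g(\sigma)\ind_{\eta(x)=a}\,d\mu_p=\int g(\sigma)\mu^p_{x,\sigma}(a)\,d\mu_p$, both integrals then collapse to the same boundary expression
\[ Y_x:=\|D_x\ind_A\|_2^2=\int \mu^p_{x,\sigma}(0)\mu^p_{x,\sigma}(1)\ind_{\sigma\in\partial A}\,d\mu_p,\qquad \|D_x\ind_A\|_1=2Y_x. \]
Consequently $\|D_x\ind_A\|_2/\|D_x\ind_A\|_1=1/(2\sqrt{Y_x})$, so the denominator in Corollary~\ref{cor:talagrand} simplifies to $\tfrac12\log(e^2/(4Y_x))$, giving
\[ \mu_p(A)(1-\mu_p(A))\leq 2c_1 C\sum_{x\in\cG}\frac{Y_x}{\log(e^2/(4Y_x))}. \]

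The remainder is bookkeeping. From $\mu^p_{x,\sigma}(0)\leq 1$ I get $Y_x\leq \mu_p(A_x)$, and from the invariance relation $\mu_p(A_x)=\int\mu^p_{x,\eta}(A_x)\,d\mu_p$ together with the definition of $\delta_p$ I get $\mu_p(A_x)\leq\delta_p$. Since $\alpha_p\leq 1/2$ always, the hypothesis $\delta_p<e^2\alpha_p^2$ forces $\delta_p<e^2/4$, and a direct derivative computation shows that $y\mapsto y/\log(e^2/(4y))$ is increasing on $[0,\delta_p]$. Applying this monotonicity term by term and then Russo's inequality $\sum_x\mu_p(A_x)\leq \beta_p^{-1}\frac{d}{dp}\mu_p(A)$ yields
\[ \mu_p(A)(1-\mu_p(A))\leq \frac{2c_1 C}{\beta_p\log(e^2/(4\delta_p))}\frac{d}{dp}\mu_p(A). \]
Using $\alpha_p^2\leq 1/4$ to replace $\log(e^2/(4\delta_p))$ by the (smaller) $\log(e^2\alpha_p^2/\delta_p)$ and absorbing the loss by the cosmetic change $2c_1C\to 4c_1C$ then matches the statement of the corollary.
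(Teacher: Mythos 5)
Your proof is correct, but it takes a genuinely different route from the paper's. The paper's one-line proof invokes Lemma~\ref{lemma:Dx-to-influences}, which supplies the \emph{two-sided} estimates $\left(\inf_\eta\mu_{x,\eta}(0)\right)^q\mu(A_x)\le\|D_x\ind_A\|_q^q\le 2\mu(A_x)$ (proved via Jensen and $\Psi_x$-invariance) and then feeds those into Corollary~\ref{cor:talagrand} and Theorem~\ref{thm:Russo}. You instead exploit the heat-bath assumption \eqref{eq:mu-is-glauber} — namely that $\mu^p_{x,\eta}$ depends on $\eta$ only through $\sigma=\eta\!\restriction_{\cG\setminus\{x\}}$ — to compute $\|D_x\ind_A\|_2^2$ and $\|D_x\ind_A\|_1$ in closed form, obtaining the exact identities $\|D_x\ind_A\|_2^2=Y_x$ and $\|D_x\ind_A\|_1=2Y_x$ with $Y_x=\int\mu^p_{x,\sigma}(0)\mu^p_{x,\sigma}(1)\ind_{\sigma\in\partial A}\,d\mu_p$. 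This gives the clean ratio $\|D_x\ind_A\|_2/\|D_x\ind_A\|_1=(2\sqrt{Y_x})^{-1}$ and makes the monotonicity step transparent (the function $y\mapsto y/\log(e^2/(4y))$ is increasing precisely on $[0,e^3/4)$, which contains $[0,\delta_p]$ since $\delta_p<e^2\alpha_p^2\le e^2/4$). The upshot is that you in fact prove the \emph{stronger} bound with $\log(e^2/(4\delta_p))/(2c_1 C)$ in place of $\log(e^2\alpha_p^2/\delta_p)/(4c_1C)$, and then weaken it cosmetically using $\alpha_p\le 1/2$. The paper's route via the lemma is more robust (it never needs the exact heat-bath structure of $D_x\ind_A$, only the inequalities), but the two-sided bounds make the denominator estimate $1+\log(\alpha_p/(2\sqrt{\mu(A_x)}))$ potentially negative and the ensuing monotonicity discussion more delicate — your exact computation sidesteps that entirely. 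Both routes reach the claim; yours is tighter and, in this heat-bath setting, cleaner.
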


\begin{corollary}\label{cor:influences2}
With the notation of Corollary \ref{cor:influences1} in place, write $\delta:=\sup_{p\in[a,b]}\delta_p$, $\alpha:=\inf_{p\in[a,b]}\alpha_p$, $\beta:=\inf_{p\in[a,b]}\beta_p$ and $\rho:=\inf_{p\in[a,b]}\rho_p$. For $a\leq p_1<p_2\leq b$, we have
\begin{align}
\mu_{p_1}(A)(1-\mu_{p_2}(A))\leq \left(\frac{\delta}{e^2\alpha^2}\right)^{(p_2-p_1)/C'},
\end{align}
where $C'=4c_1C(\alpha^{-3},\abs{\cN},\rho)/\beta$. 
\end{corollary}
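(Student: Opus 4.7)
The plan is to integrate the differential inequality from Corollary \ref{cor:influences1} across $[p_1,p_2]$ using the standard logistic ODE trick.

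First, dispose of the degenerate case: if $\delta \geq e^2\alpha^2$, the right-hand side is $\geq 1$ while the left-hand side is at most $1$, so the claim is trivial. So assume $\delta < e^2\alpha^2$. For every $p\in[a,b]$ the monotonicity relations $\alpha_p\geq\alpha$, $\beta_p\geq\beta$, $\rho_p\geq\rho$, $\delta_p\leq\delta$ hold by definition, so in particular $\delta_p < e^2\alpha_p^2$ and Corollary \ref{cor:influences1} applies at every $p$. Next, using that $C(K,|\cN|,\rho)$ is increasing in $K$ and decreasing in $\rho$ (visible from the proof of Theorem \ref{thm:talagrand}, which yields a constant growing in $K$ and $\rho^{-1}$), $K_p=\alpha_p^{-3}\leq\alpha^{-3}$ gives $C(\alpha_p^{-3},|\cN|,\rho_p)\leq C(\alpha^{-3},|\cN|,\rho)$. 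Combining with $\beta_p\geq\beta$ and $\log(e^2\alpha_p^2/\delta_p)\geq\log(e^2\alpha^2/\delta)$, Corollary \ref{cor:influences1} reduces, uniformly in $p\in[a,b]$, to
\begin{align}
\frac{d}{dp}\mu_p(A) \geq \frac{\log(e^2\alpha^2/\delta)}{C'}\,\mu_p(A)\bigl(1-\mu_p(A)\bigr),
\end{align}
with $C' = 4c_1 C(\alpha^{-3},|\cN|,\rho)/\beta$.

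Then I separate variables. Writing $g(p):=\mu_p(A)$ and assuming for the moment $g(p)\in(0,1)$ on $[p_1,p_2]$, the chain rule gives $\frac{d}{dp}\log\frac{g}{1-g} = \frac{g'}{g(1-g)} \geq \gamma$, where $\gamma:=\log(e^2\alpha^2/\delta)/C'$. Integrating from $p_1$ to $p_2$ and exponentiating,
\begin{align}
\frac{g(p_2)\bigl(1-g(p_1)\bigr)}{g(p_1)\bigl(1-g(p_2)\bigr)} \geq e^{\gamma(p_2-p_1)} = \left(\frac{e^2\alpha^2}{\delta}\right)^{(p_2-p_1)/C'}.
\end{align}
Since $g(p_2)\leq 1$ and $1-g(p_1)\leq 1$ bound the numerator on the left by $1$, rearranging yields the announced estimate. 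The edge cases $g(p_1)=0$ (LHS vanishes) and $g(p_2)=1$ (LHS vanishes) are immediate; the intermediate case, e.g.\ $g(p_1)>0$ and $g(p_2)<1$, follows from the monotonicity of $p\mapsto\mu_p(A)$ (itself a consequence of the FKG/coupling argument for Glauber dynamics with increasing spin-flip rates) so that $g$ stays in $(0,1)$ on all of $(p_1,p_2)$.

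The main obstacle is not any single computation but the bookkeeping: verifying that each of the $p$-dependent quantities in Corollary \ref{cor:influences1} worsens in the desired direction when replaced by its uniform bound over $[a,b]$, and in particular that the map $(K,\rho)\mapsto C(K,|\cN|,\rho)$ is monotone in the way one expects. Once this monotonicity is asserted (and traced back to the constants in Theorem \ref{thm:talagrand} and Lemma \ref{lemma:IPS-K-condition}), the rest is the routine Grönwall-style integration of a logistic differential inequality.
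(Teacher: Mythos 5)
Your proof is correct and follows essentially the same route as the paper: the paper's proof simply defers to Corollary 1.3 of Talagrand's original article, which is exactly your argument of replacing the $p$-dependent constants in Corollary \ref{cor:influences1} by their uniform bounds over $[a,b]$ and integrating the resulting logistic differential inequality for $\log\bigl(\mu_p(A)/(1-\mu_p(A))\bigr)$. Your explicit treatment of the degenerate case $\delta\geq e^2\alpha^2$, the monotonicity of the constant $C$, and the boundary cases $\mu_{p_1}(A)=0$, $\mu_{p_2}(A)=1$ just fills in details the paper leaves to the reader.
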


Finally we can also obtain an analogue to the KKL bound \cite{KAHN:KALAI:LINIAL:88}. Here we slightly deviate from the setting of this section, in that we only have a single measure $\mu$, and $A$ can be any event in $\Omega$. For such an event we define $\supp(A)$ as the set of $x\in\cG$ for which $D_x \ind_A$ is not constant almost surely.
\begin{corollary}\label{cor:KKL}
Assume that $\alpha=\inf\{\mu_{x,\eta}(e) : \eta\in\Omega,x\in\cG,e\in\{0,1\}\}>0$ and $(L,\mu)$ satisfies a log-Sobolev inequality with constant $\rho>0$. 
Write $R:= \abs{\supp(A)}/[\mu(A)(1-\mu(A))]$, which we assume to be finite. Then
\[ \sup_{x\in\cG} \mu(A_x)\geq \min\left(\frac{\log(\frac{\alpha^4}{16}R)}{8CR},\frac{e^2\alpha^2}{2}\right), \]
where $C=C(\alpha^{-3},\abs{\cN},\rho)$ is as in Theorem \ref{thm:talagrand}.
\end{corollary}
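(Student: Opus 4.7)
\medskip

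\noindent\textbf{Proof plan.} The plan is to apply Corollary \ref{cor:talagrand} to $f=\ind_A$. Since $E=\{0,1\}$ is finite and $\alpha>0$, Lemma \ref{lemma:IPS-K-condition} guarantees that $\ind_A$ is good with constant $K=\alpha^{-3}$, so
\[
\mu(A)(1-\mu(A)) \;\leq\; c_1 C\sum_{x\in\supp(A)}\frac{\|D_x\ind_A\|_2^2}{1+\log(\|D_x\ind_A\|_2/\|D_x\ind_A\|_1)},
\]
with only $x\in\supp(A)$ contributing. Writing $M:=\sup_x\mu(A_x)$, the remainder of the argument will estimate each summand in terms of $\mu(A_x)$ and then extract a lower bound on $M$.

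The key step is a clean norm identity for Boolean inputs. Reversibility of $L$ forces $\mu_{x,\eta}=\mu(\,\cdot\,|\,\eta\text{ off }x)$, so I condition on $\xi:=\eta\text{ off }x$. Writing $s(\xi)\in[\alpha,1-\alpha]$ for the conditional probability that $\eta\in A$ given $\xi$, and $P$ for the set of $\xi$ on which $\ind_A$ actually depends on $\eta(x)$, a case analysis on $D_x\ind_A(\eta)\in\{-(1-s),0,s\}$ combined with $d\mu(\eta)=\mu_{x,\xi}(d\eta(x))\,d\mu(\xi)$ yields
\[
\|D_x\ind_A\|_2^2=\int s(1-s)\,\ind_P\,d\mu, \qquad \|D_x\ind_A\|_1=2\|D_x\ind_A\|_2^2.
\]
Consequently $1+\log(\|D_x\ind_A\|_2/\|D_x\ind_A\|_1)=\tfrac12\log(e^2/(4\|D_x\ind_A\|_2^2))$, and the elementary $s(1-s)\leq s$ together with $\mu(A_x)=\int s\,\ind_P\,d\mu$ gives $\|D_x\ind_A\|_2^2\leq\mu(A_x)\leq M$.

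Plugging these into the Talagrand bound and using that $t\mapsto t/\log(e^2/(4t))$ is increasing on $(0,e^2/4)$ to pull the maximum out of the logarithm, I obtain
\[
\mu(A)(1-\mu(A))\;\leq\;\frac{2c_1 C\sum_{x\in\supp(A)}\|D_x\ind_A\|_2^2}{\log(e^2/(4M))}\;\leq\;\frac{2c_1 C\,|\supp(A)|\,M}{\log(e^2/(4M))},
\]
which rearranges to the implicit inequality $2c_1 CRM\geq\log(e^2/(4M))$.

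The main obstacle is converting this transcendental inequality into the claimed explicit bound. This is a standard KKL-style bootstrap: the map $M\mapsto 2c_1 CRM-\log(e^2/(4M))$ is strictly increasing in $M$, so it suffices to verify that the candidate $y:=\log(\alpha^4 R/16)/(8CR)$ satisfies $2c_1 CRy\leq\log(e^2/(4y))$. Under the hypothesis $M<e^2\alpha^2/2$ (otherwise the second branch of the $\min$ is realised) and the universal constant $c_1\leq 4$ from Corollary \ref{cor:talagrand}, this reduces, after substitution and simplification using $\alpha<1$, to an elementary inequality in $\alpha$ and $R$; when $\alpha^4 R\leq 16$ the candidate $y$ is non-positive and the statement is vacuous. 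Pinning down the exact constants $8$ and $\alpha^4/16$ is the delicate bookkeeping piece, but the order of magnitude $M\gtrsim\log R/(CR)$ is already forced by the transcendental inequality alone.
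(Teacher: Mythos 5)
Your overall strategy (apply Corollary \ref{cor:talagrand} to $\ind_A$, convert the $\norm{D_x\ind_A}$-norms into $\mu(A_x)$, then extract an explicit bound on $M=\sup_x\mu(A_x)$) matches the paper, and your middle step is a nice variant: instead of the two-sided bounds of Lemma \ref{lemma:Dx-to-influences} you use the exact heat-bath identity $\norm{D_x\ind_A}_1=2\norm{D_x\ind_A}_2^2$, which is correct \emph{in the setting of Section \ref{section:influences}} and is even slightly sharper. Be aware, though, that your justification for it is wrong: reversibility of a generator of the form \eqref{eq:generator} does \emph{not} force $\mu_{x,\eta}$ to be the single-site conditional of $\mu$ (Metropolis rates are a counterexample); the identity holds here only because heat-bath dynamics \eqref{eq:mu-is-glauber} is assumed in this section.

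The genuine gap is the last step, which is exactly the part you leave as ``delicate bookkeeping'', and the route you sketch for it does not close. First, the bound $c_1\leq 4$ is nowhere established; the paper's statement and proof of Corollary \ref{cor:KKL} silently absorb $c_1$ into $C$, and you must do the same (or change the constants), not assume a numerical value. Second, and more seriously, even granting $c_1=4$ your candidate verification fails: with $y=\log(\tfrac{\alpha^4}{16}R)/(8CR)$ the requirement $2c_1CRy\leq\log\bigl(e^2/(4y)\bigr)$ reduces (set $u=\alpha^4R/16$) to $\log u\leq 32e^2C/\alpha^4$, which is false for large $R$; indeed for such $R$ the transcendental inequality $8CRM\geq\log(e^2/(4M))$ admits solutions $M<y$, so no monotonicity argument can rescue it — the ``elementary inequality in $\alpha$ and $R$'' you appeal to is simply not true in that regime. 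With $c_1$ absorbed (coefficient $2CR$) the check does reduce to $\log u/u^{3/4}\leq 32e^2C/\alpha^4$, which holds for all $u\geq1$, but you neither perform this computation nor notice that it is sensitive to the constant in front of $CRM$. The paper avoids the issue differently: it splits into the cases $\mu(A_{x^*})>R^{-1/2}$ (where the claim is immediate from $C\geq1$ since $\log(\tfrac{\alpha^4}{16}R)\leq\log R\leq 8C\sqrt R$) and $\mu(A_{x^*})\leq R^{-1/2}$, in which case $\mu(A_{x^*})^{-1/2}\geq R^{1/4}$ turns the implicit inequality directly into $M\geq\bigl(1+\tfrac14\log(\tfrac{\alpha^4}{16}R)\bigr)/(2CR)\geq\log(\tfrac{\alpha^4}{16}R)/(8CR)$, which is where the constants $8$ and $\alpha^4/16$ come from. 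You need either that dichotomy or the corrected candidate computation; as written, the stated bound is not proved.
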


\subsection{Organization of the proofs}
The proofs are organized as follows: Section \ref{section:talagrand} proves Talagrand's inequality by using Theorem \ref{thm:nice}. In Section \ref{section:Russo} we prove Russo's formula and related consequences plus Lemma \ref{lemma:IPS-K-condition}. Then in Section \ref{section:reverse-talagrand} we prove Theorem \ref{thm:reverse-talagrand}. The remaining Sections \ref{section:graphical-construction} to \ref{section:full-iteration} contain the proof of Theorem \ref{thm:nice}.

%The outline of the remainder of the paper is as follows: In Section \ref{section:graphical-construction} we introduce the graphical construction and necessary notation to describe the semi-group $P_t$ in terms of the $\Psi_x$. In Section \ref{section:trees} we introduce binary trees, which provide the basic framework in encoding the error terms. In Section \ref{section:iteration-steps} the error terms are defined and we prove the two basic iteration steps. The full iteration procedure is the performed in Section \ref{section:full-iteration}, and we prove Theorem \ref{thm:nice} via a self-improvement argument from the result obtained by the iteration. In Section \ref{section:talagrand} we prove Talagrand's inequality, Theorems \ref{thm:talagrand} and \ref{thm:tensor-talagrand}, and in Section \ref{section:reverse-talagrand} we prove Theorem \ref{thm:reverse-talagrand}. The concluding Section \ref{section:Russo} is devoted to the remaining proofs, namely Russo's formula and consequences plus Lemma \ref{lemma:IPS-K-condition}.

\section{Proof of Talagrand's Inequality}\label{section:talagrand}
We will follow essentially the proof of Talagrand's inequality as in \cite{CORDERO-ERAUSQUIN:LEDOUX:12} with the use of the commutation property given by Theorem \ref{thm:nice}. Before proving Theorem \ref{thm:talagrand} itself we need an auxiliary lemma.
\begin{lemma}\label{lemma:orlicz}
Let $\Phi(x)=\frac{x^2}{\log(e+\abs{x})}$. There is a numerical constant $C>0$ so that 
\begin{align}
 \int_1^2\norm{f}_r^2 \,dr \leq C \norm{f}_\Phi^2 .
\end{align}
\end{lemma}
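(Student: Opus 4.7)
The plan is to reduce the bound on $\int_1^2 \|f\|_r^2\,dr$ to a bound on the simpler quantity $\int_1^2 \|f\|_r^r\,dr$, which is directly computable via Fubini, and then close the small gap between the $r$th power and the square. By homogeneity I may assume $\|f\|_\Phi = 1$, so that $\int \Phi(|f|)\,d\mu \leq 1$, and the task reduces to showing $\int_1^2 \|f\|_r^2\,dr \leq C$.

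First I would compute, using Fubini,
\[
\int_1^2 \|f\|_r^r\,dr = \int h(|f|)\,d\mu,\qquad h(x) := \frac{x^2-x}{\log x},
\]
with $h(0) = 0$ and $h(1) = 1$ by continuity. A short check shows $h \leq 1$ on $[0,1]$, while on $[1,\infty)$ the ratio $h(x)/\Phi(x) = (1-1/x)\log(e+x)/\log x$ is bounded (and tends to $1$ at infinity). Hence $\int_1^2 \|f\|_r^r\,dr \leq C(1 + \int\Phi(|f|)\,d\mu) \leq C'$.

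Next I would prove the pointwise estimate $\|f\|_r^r \leq C_1/(2-r)$ on $[1,2)$. Splitting $\{|f|\leq 1\}$ (which contributes at most $1$) from $\{|f|>1\}$, on the latter region I factor $|f|^r = \Phi(|f|)\cdot |f|^{r-2}\log(e+|f|)$ and use the elementary bound $\sup_{x\geq 1} x^{r-2}\log(e+x) = O(1/(2-r))$, with the supremum attained near $x = e^{1/(2-r)}$.

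The hard part is to combine these two estimates. Setting $s = 2-r$ and $F(s) = \|f\|_{2-s}^{2-s}$,
\[
\|f\|_r^2 = F(s)^{2/(2-s)} = F(s)\cdot F(s)^{s/(2-s)}.
\]
Using $F(s) \leq C_1/s$, the correcting factor satisfies
\[
F(s)^{s/(2-s)} \leq \exp\!\left(\tfrac{s}{2-s}(\log C_1 - \log s)\right) \leq C_2
\]
uniformly on $s \in (0,1]$, because $-s\log s \leq 1/e$. Thus $\|f\|_r^2 \leq C_2\|f\|_r^r$ for all $r \in [1,2)$, and integrating in $r$ combined with the Fubini step yields the claim. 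The subtlety is precisely here: the pointwise bound $\|f\|_r^r \lesssim 1/(2-r)$ is \emph{just barely} non-integrable on $[1,2]$, but the correcting exponent $s/(2-s)$ vanishes quickly enough to absorb $\log(1/s)$ and cost only a bounded multiplicative factor, thereby preserving integrability in $r$.
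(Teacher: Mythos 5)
Your proof is correct, but it takes a genuinely different route from the paper's. The paper follows Talagrand's original scheme: it decomposes $|f|$ into exponential level sets $f_n=|f|\ind_{e^{n-1}<|f|\le e^n}$, observes that $\sum_n\frac1{n+1}\int f_n^2\,d\mu\le\int\Phi(|f|)\,d\mu\le1$, writes $\|f\|_r^r=\sum_n\int f_n^r\,d\mu$, and uses convexity of $x\mapsto x^{2/r}$ with the weights $\int f_n^2\,d\mu/(n+1)$ to reduce the claim to the elementary uniform bound $\int_1^2 e^{\frac2r(n-1)(r-2)}(n+1)^{2/r}\,dr\le 2e^4$. You avoid any decomposition: Fubini turns $\int_1^2\|f\|_r^r\,dr$ into $\int h(|f|)\,d\mu$ with $h(x)=(x^2-x)/\log x$, which is dominated by $1+C\Phi(x)$, and the passage from the $r$-th power to the square is handled by the crude bound $\|f\|_r^r\le C_1/(2-r)$ together with the observation that the correcting exponent $s/(2-s)$ (with $s=2-r$) absorbs the logarithm, via $-s\log s\le 1/e$, giving $\|f\|_r^2\le C_2\|f\|_r^r$ uniformly for $r\in[1,2)$; the endpoint $r=2$ is irrelevant since the comparison is only needed for a.e.\ $r$. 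Both arguments are elementary and produce a numerical constant; yours separates the power mismatch from the measure-theoretic content and dispenses with the level-set/convexity device, while the paper's computation stays closer to Talagrand's original lemma and yields a rather explicit constant. In a final write-up you should spell out the boundedness of $h/\Phi$ on $[1,\infty)$ (e.g.\ via $\log x\ge (x-1)/x$ near $x=1$ and $\log(e+x)\le(1+\log 2)\log x$ for $x\ge e$), but this is a routine check, not a gap.
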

\begin{proof}
By homogeneity we can assume $\norm{f}_\Phi=1$, which implies $\int \Phi(f)\,d\mu\leq 1$. 
Denote by $f_n=\abs{f}\ind_{e^{n-1}<\abs{f}\leq e^n}$, $n\geq 1$, and $g_0=\abs{f}\ind_{\abs{f}\leq 1}$. We have
\begin{align}\label{eq:orlicz-1}
 \sum_{n=0}^\infty \frac{1}{n+1}\int f_n^2 \,d\mu \leq  \sum_{n=0}^\infty \int \frac{f_n^2}{\log(e+f_n)}\,d\mu \leq 1.
\end{align}
To prove the claim, 
\begin{align}
 \int_1^2 \norm{f}_r^2\,dr 
 &= \int_1^2\left(\sum_{n=0}^\infty \int f_n^r\,d\mu\right)^{\frac2r}dr \\
 &\leq \int_1^2\left(\sum_{n=0}^\infty e^{(n-1)(r-2)}(n+1)\frac{\int f_n^2\,d\mu}{n+1}\right)^{\frac2r}dr.
\end{align}
By convexity and \eqref{eq:orlicz-1}, this is less than
\begin{align}
 \int_1^2 \sum_{n=0}^\infty e^{\frac2r(n-1)(r-2)}(n+1)^{\frac2r}\frac{\int f_n^2\,d\mu}{n+1}dr.
\end{align}
Once we have shown that there is a constant $C_1$ so that 
\begin{align}\label{eq:orlicz-2}
 \int_1^2 e^{\frac2r(n-1)(r-2)}(n+1)^{\frac2r}\,dr\leq C_1
\end{align}
 uniformly in $n\in\bN$ we can conclude that
$\int_1^2 \norm{f}_r^2\,dr \leq C_1.$
To show \eqref{eq:orlicz-2} we substitute $u=2/r$  and then use the fact that $\int_1^2 e^{2x(1-u)}x^u\,du \leq 1$ for all $x\geq1$:
\begin{align}
\int_1^2 e^{\frac2r(n-1)(r-2)}(n+1)^{\frac2r}\,dr &= \int_1^2 e^{2(n+1)(1-u)}(n+1)^{u}e^{-4(1-u)}\frac2{u^2}\,du \leq 2e^4. \qedhere
\end{align}
\end{proof}
\begin{proof}[Proof of Theorem \ref{thm:talagrand}]
Without loss of generality we assume that $\int f\,d\mu=0$. From the Poincar\'e inequality \eqref{eq:L2-decay} it follows that
\begin{align}
 \Var_\mu(f)=\norm{f}_2^2 &\leq \norm{f}_2^2-\norm{P_Tf}_2^2 + e^{-2\kappa T}\norm{f}_2^2\\ 
 &\leq \ldots\leq \frac{1}{1-e^{-2\kappa T}}(\norm{f}_2^2-\norm{P_Tf}_2^2) \\
 &= \frac{1}{1-e^{-2\kappa T}}\int_0^T 2\cE(P_tf, P_tf)\,dt \label{eq:talagrand-0}
\end{align}
Since $f$ is a good function, we can estimate \eqref{eq:talagrand-0} using Theorem \ref{thm:nice}, to get
\begin{align}
\Var_\mu(f) &\leq \frac{4K e^{72 \abs{\cN}^2(1+\abs{\cN})^2}}{1-e^{-2\kappa T}}\int_0^T 2^{ t} \sum_{x\in\cG}\norm{D_x f}^2_{p(t)} \,dt \\
&\leq \frac{4K e^{72 \abs{\cN}^2(1+\abs{\cN})^2} 2^{ T}}{1-e^{-2\rho T}}\sum_{x\in\cG} \int_0^T \norm{D_x f}^2_{p(t)} \,dt .	\label{eq:talagrand-1}
\end{align}
Choosing $T=\frac{1}{2\rho}$ and using the substitution $r=p(t)$, we have 
\[ \int_0^T \norm{D_x f}^2_{p(t)} \,dt \leq \frac{e}{2\rho}\int_1^2 \norm{D_x f}^2_r\,dr .\]
Using Lemma \ref{lemma:orlicz} we can conclude that
\begin{align}
 \eqref{eq:talagrand-1} &\leq C(\abs{\cN},\rho,K)\sum_{x\in\cG}\norm{D_x f}^2_\Phi. \qedhere
\end{align}
\end{proof}
\begin{proof}[Proof of Theorem \ref{thm:tensor-talagrand}]
 It is known that the log-Sobolev inequality tensorizes (see for example \cite[Theorem 4.4]{GUIONNET:ZEGARLINSKI:03}), that is, it generalizes to product spaces. The log-Sobolev constant on the product space is given by the minimum of the individual log-Sobolev constants. However, $\Omega$ is not of the form $E^\cG$, so we cannot apply Theorem \ref{thm:talagrand} directly. But the proofs are identical except for minor modifications to adjust to this more general setting. It suffices to replace the definition of $I$ in Section \ref{section:iteration-steps} by 
 \begin{align}
  I(T,v,S) &:= \sum_{i=1}^N\sup_{X\in \cX_T(\cG_i)}\sum_{x\in \cG_i} \normb{D_x \PsiSX[x+X](v,\infty) f}^2_{p\left(\uS(v)\right)},
 \end{align}
with $\cX_T(\cG)$ being the maps from $\oT$ to $\cG$, and also to replace $\abs{N}$ and $\rho$ by their respective maximum or minimum.
\end{proof}

\section{Russo's formula and other proofs}\label{section:Russo}
\begin{proof}[Proof of Lemma \ref{lemma:IPS-K-condition}]
Fix $x\in \cG$ and $f\in L^2(\mu)$. Let $\eta^*$ denote the maximizer (or one of the maximizers) of $f$ out of the set $\{\xi\in\Omega : \xi=\eta\text{ off }x\}$. Similarly let $\eta_*$ denote the minimizer. We have 
\begin{align}
\int (\Psi_xf-f)^2 \,d\mu 
&\geq \int \left(\int (f(\xi)-f(\eta))\mu_{x,\eta}(d\xi)\right)^2\ind_{\eta=\eta_*}\,\mu(d\eta)	\\
&\geq \int \left(\int_{\xi=\eta^*} (f(\xi)-f(\eta_*))\mu_{x,\eta}(d\xi)\right)^2\ind_{\eta=\eta_*}\,\mu(d\eta)	\\
&\geq \alpha^2 \int (f(\eta^*)-f(\eta_*))^2\ind_{\eta=\eta_*}\,\mu(d\eta).	
\end{align}
By \eqref{eq:Psi-invariant} and the fact that $\eta^*$ and $\eta_*$ do not depend on $\eta(x)$, the above is equal to
\begin{align}
 \alpha^2 \int (f(\eta^*)-f(\eta_*))^2 \Psi_x(\ind_{\eta=\eta_*})\,\mu(d\eta)	
&= \alpha^2 \int (f(\eta^*)-f(\eta_*))^2 \mu_{x,\eta}(\eta_*(x))\,\mu(d\eta)	\\
&\geq \alpha^3 \int \Psi_x(f-f(\eta))^2(\eta)\,\mu(d\eta).\qedhere
\end{align}
\end{proof}

\begin{proof}[Proof of Theorem \ref{thm:Russo}]
Fix $\epsilon>0$ so that $p-\epsilon>a$. Since $\mu_p$ dominates $\mu_{p-\epsilon}$, we can construct a coupling $\hP$ of $\mu_p$ and $\mu_{p-\epsilon}$ so that $\eta$ and $\xi$ have law $\mu_p$ and $\mu_{p-\epsilon}$ respectively, and $\eta\geq \xi$ $\hP$-a.s. 

Fix a finite subset $\Lambda$ of $\cG$, and set $\Delta:=\{x\in\cG: \eta(x)\neq \xi(x)\}$. Since $A$ is increasing,
\begin{align}
\mu_p(A)-\mu_{p-\epsilon}(A)
= \hP\left(\eta\in A, \xi \not\in A\right)	
&= \sum_{x\in\Lambda} \hE\left( \frac{1}{\abs{\Delta\cap\Lambda}}\ind_{x\in\Delta}\ind_{\eta\in A, \xi\not\in A} \right)	\\
&\geq \sum_{x\in\Lambda} \hE\left( \frac{1}{\abs{\Delta\cap\Lambda}}\ind_{x\in\Delta}\ind_{\eta\in A,\eta^x\not\in A} \right).
\end{align}
For $x\in \Lambda$ we have $\ind_{x\in\Delta}\frac{1}{\abs{\Delta\cap\Lambda}}\geq \ind_{\Delta\cap\Lambda=\{x\}}$, and since $A$ is increasing, $A_x=\{\eta\in A, \eta^x\not\in A\}$ implies $\eta(x)=1$. Hence,
\begin{align}
\hE\left( \frac{1}{\abs{\Delta\cap\Lambda}}\ind_{x\in\Delta}\ind_{A_x} \right) 
&\geq \hE\left(\ind_{A_x}  \hP\left[\Delta\cap\Lambda=\{x\}\;\middle|\;\eta,\eta(x)=1\right]\right)	\\
&\geq \hP\left(A_x\right) \inf_{\zeta\in\Omega}\hP\left(\Delta\cap\Lambda=\{x\}\;\middle|\;\eta=\zeta\text{ off }x,\eta(x)=1\right).
\end{align}
Fix $\zeta\in\Omega$ for now. By using \eqref{eq:mu-is-glauber} and assuming that $x$ is such that $x+\cN\subset \Lambda$, we get 
\begin{align}
&\hP\left(\Delta\cap\Lambda=\{x\}\;\middle|\;\eta=\zeta\text{ off }x,\eta(x)=1\right) \\
&\quad= \frac{1}{\mu^p_{x,\zeta}(1)}\hP\left(\Delta\cap\Lambda=\{x\}, \eta(x)=1\;\middle|\;\eta=\zeta\text{ off }x\right)	\\
&\quad= \frac{1}{\mu^p_{x,\zeta}(1)}\hE\left(\ind_{\Delta\cap\Lambda\setminus\{x\}=\emptyset}\hP\left( \eta(x)=1,\xi(x)=0 \;\middle|\;\eta=\xi=\zeta\text{ off }x\right)\;\middle|\;\eta=\zeta\text{ off }x\right)	\\
&\quad \geq \frac{1}{\mu^p_{x,\zeta}(1)}\hE\left(\ind_{\Delta\cap\Lambda\setminus\{x\}=\emptyset}\;\middle|\;\eta=\zeta\text{ off }x\right)\inf_{\zeta'\in\Omega}\hP\left( \eta(x)=1,\xi(x)=0 \;\middle|\;\eta=\xi=\zeta'\text{ off }x\right).
\end{align}
By using \eqref{eq:mu-is-glauber} as well as $\eta(x)\geq \xi(x)$, we have
\begin{align}
\inf_{\zeta'\in\Omega}\hP\left( \eta(x)=1,\xi(x)=0 \;\middle|\;\eta=\xi=\zeta'\text{ off }x\right) = \inf_{\zeta'\in\Omega} \left[\mu^p_{x,\zeta'}(1)-\mu^{p-\epsilon}_{x,\zeta'}(1)\right] \geq \beta_p\epsilon-o(\epsilon). \label{eq:russo-1}
\end{align}
By using the fact that $\Lambda$ is finite and $\mu^{p-\epsilon}$ converges to $\mu^p$ we also have
\begin{align}\label{eq:russo-2}
\hE\left(\ind_{\Delta\cap\Lambda\setminus\{x\}=\emptyset}\;\middle|\;\eta=\zeta\text{ off }x\right) = 1-o(1)
\end{align}
uniformly in $\zeta$. Combining \eqref{eq:russo-1} and \eqref{eq:russo-2}, we get
\begin{align}
\inf_{\zeta\in\Omega}\hP\left(\Delta\cap\Lambda=\{x\}\;\middle|\;\eta=\zeta\text{ off }x,\eta(x)=1\right) &\geq \frac{1}{\sup_{\zeta\in\Omega}\mu^p_{x,\zeta}(1)}\beta (\epsilon -o(\epsilon))\ind_{x+\cN\subset\Lambda}.
\end{align}
Therewith is
\begin{align}
\mu_p(A)-\mu_{p-\epsilon}(A) &\geq \sum_{x\in\Lambda:x+\cN\in\Lambda}\mu_p(A_x)\frac{\beta}{\sup_{\zeta\in\Omega}\mu^p_{x,\zeta}(1)} (\epsilon -o(\epsilon)),
\end{align}
and by dividing by $\epsilon$ and then sending first $\epsilon$ to 0 and then $\Lambda$ to $\cG$ we finish the proof.
\end{proof}
To combine Russo's formula and Talagrand's inequality we require the following lemma.
\begin{lemma}\label{lemma:Dx-to-influences}
For any event $A$, any $\mu$ out of the measures considered in Section \ref{section:influences}, any $q\geq 1$ and $x\in\cG$,
\begin{align}
\left(\inf_{\eta\in\Omega}\mu_{x,\eta}(0)\right)^q\mu(A_x)\leq \norm{D_x\ind_A}_q^q \leq 2\mu(A_x).
\end{align}
\end{lemma}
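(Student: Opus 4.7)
My plan is a direct pointwise computation of $D_x\ind_A$ followed by disintegration of $\mu$ with respect to its conditional distribution at site $x$. The key preliminary is that under the heat-bath assumption \eqref{eq:mu-is-glauber} the kernel $\mu_{x,\eta}$ depends on $\eta$ only through the off-$x$ coordinates, so writing $\tilde\mu$ for the marginal of $\mu$ on $E^{\cG\setminus\{x\}}$ we have the disintegration $\mu(d\eta) = \mu_{x,\eta}(d\eta(x))\,\tilde\mu(d\eta_{\neq x})$.

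First I would expand
\[
D_x\ind_A(\eta) = \mu_{x,\eta}(0)\ind_A(\eta^{x\to 0}) + \mu_{x,\eta}(1)\ind_A(\eta^{x\to 1}) - \ind_A(\eta)
\]
and check by case analysis on the four possibilities for $(\ind_A(\eta^{x\to 0}),\ind_A(\eta^{x\to 1}))$ that $D_x\ind_A$ vanishes off the set where $A$ depends on the spin at $x$. On the complementary ``sensitive'' set, which partitions into $A_x$ and its companion $A_x^{\mathrm{out}} := \{\eta\notin A,\; \eta^x\in A\}$, one obtains $\abs{D_x\ind_A(\eta)} = \mu_{x,\eta}(\eta^x(x))$.

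For the upper bound I would disintegrate $\norm{D_x\ind_A}_q^q$ against $\tilde\mu$. Since $\mu_{x,\eta}$ is independent of $\eta(x)$, each off-$x$ configuration in the sensitive set contributes to $\norm{D_x\ind_A}_q^q$ the quantity
\[
\mu_{x,\eta}(0)\mu_{x,\eta}(1)^q + \mu_{x,\eta}(1)\mu_{x,\eta}(0)^q \;=\; \mu_{x,\eta}(0)\mu_{x,\eta}(1)\bigl(\mu_{x,\eta}(0)^{q-1}+\mu_{x,\eta}(1)^{q-1}\bigr).
\]
The parenthesized factor is bounded by $2$ since $q\geq 1$ and $\mu_{x,\eta}(\cdot)\leq 1$, and on the $A_x$-portion of each sensitive fibre $\mu_{x,\eta}(0)\mu_{x,\eta}(1)$ is dominated by $\mu_{x,\eta}(\eta^x(x))$, which is exactly the fibrewise mass of $A_x$. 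Reassembling over fibres yields $\norm{D_x\ind_A}_q^q \leq 2\mu(A_x)$.

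For the lower bound I would discard the nonnegative $A_x^{\mathrm{out}}$ contribution and observe that, in the setting of Section \ref{section:influences} with $A$ increasing, the event $A_x$ forces $\eta(x)=1$ and hence $\abs{D_x\ind_A(\eta)}^q = \mu_{x,\eta}(0)^q \geq \bigl(\inf_{\eta\in\Omega}\mu_{x,\eta}(0)\bigr)^q$; bounding pointwise and integrating over $A_x$ gives the claimed lower bound. The main potential obstacle is the sign and case bookkeeping in the first step --- keeping straight which of $\mu_{x,\eta}(0),\mu_{x,\eta}(1)$ carries the relevant factor in each of the four cases; once that is settled the remaining manipulations are routine.
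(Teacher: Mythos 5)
Your argument is correct in substance but takes a more hands-on route than the paper. The paper never computes $D_x\ind_A$ exactly: for the upper bound it applies Jensen's inequality and drops the power $q$ (the integrand being $0$ or $1$), uses the pointwise bound $\abs{\ind_{\xi\in A}-\ind_{\eta\in A}}\leq \ind_{\xi\in A_x}+\ind_{\eta\in A_x}$ for $\xi=\eta_{x\mapsto a}$, and then invokes \eqref{eq:Psi-invariant} to see that the $\xi$-marginal is again $\mu$, giving $2\mu(A_x)$; for the lower bound it simply restricts the integral to $A_x$, exactly as you do. You instead evaluate $\abs{D_x\ind_A}$ pointwise on the sensitive fibres and disintegrate $\mu$ through the heat-bath kernel; that is legitimate here because \eqref{eq:mu-is-glauber} makes $\mu_{x,\eta}$ the genuine conditional law of $\mu$ at $x$, whereas the paper's softer route needs only \eqref{eq:Psi-invariant} and so avoids the case bookkeeping entirely. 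One slip to repair in your upper bound: on the $A_x$-portion of a sensitive fibre, $\mu_{x,\eta}(\eta^x(x))$ is the conditional mass of the companion point in $\{\eta\notin A,\ \eta^x\in A\}$, not of $A_x$; the fibrewise mass of $A_x$ is $\mu_{x,\eta}(\eta(x))$ at that point. Since the product $\mu_{x,\eta}(0)\mu_{x,\eta}(1)$ is bounded by either factor, your chain still closes to $2\mu(A_x)$ once the correct factor is named, so this is a notational rather than a structural defect. Finally, your lower bound explicitly uses that $A$ is increasing to force $\eta(x)=1$ on $A_x$; the paper states the lemma for an arbitrary event but its displayed computation makes the same identification, and for general $A$ the analogous bound with $\inf_{\eta,e}\mu_{x,\eta}(e)$ is what the later corollaries actually use, so your extra hypothesis is harmless.
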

\begin{proof}
For any $q\geq 1$, we use Jensen's inequality and drop the $q$:
\begin{align}
\norm{D_x\ind_A}_q^q &\leq \int\int \abs{\ind_{\xi\in A}- \ind_{\eta\in A}}\,\mu_{x,\eta}(d\xi)\mu(d\eta) 
\leq \int\int (\ind_{\xi\in A_x} + \ind_{\eta\in A_x})\,\mu_{x,\eta}(d\xi)\mu(d\eta). 
\end{align}
Since $\xi$ is $\mu$-distributed the upper bound follows.

The lower bound is obtained by the following calculation:
\begin{align}
\norm{D_x\ind_A}_q^q 
&\geq \int\abs{\int \left(\ind_{\xi\in A}- \ind_{\eta\in A}\right)\,\mu_{x,\eta}(d\xi)}^q \ind_{\eta\in A_x} \mu(d\eta)	\\ 
&= \int \ind_{\eta\in A_x}\mu_{x,\eta}(0)^q \mu(d\eta) \geq \inf_{\eta\in\Omega}\mu_{x,\eta}(0)^q\mu(A_x).\qedhere
\end{align}
\end{proof}

\begin{proof}[Proof of Corollary \ref{cor:influences1}]
Lemma \ref{lemma:IPS-K-condition} guarantees that all conditions of Theorem \ref{thm:talagrand} are satisfied. Combining Theorem \ref{thm:Russo}, Lemma \ref{lemma:Dx-to-influences} and Corollary \ref{cor:talagrand} yields the claim.
\end{proof}

\begin{proof}[Proof of Corollary \ref{cor:influences2}]
Using Corollary \ref{cor:influences1}, the proof is the same as Corollary 1.3 in \cite{TALAGRAND:94}.
\end{proof}

\begin{proof}[Proof of Corollary \ref{cor:KKL}]
We can assume that $\sup_{x\in\cG}\mu(A_x) \leq \alpha^2e^2/2$, otherwise there is nothing to prove. 

For the other estimate, we follow the argument from \cite{CORDERO-ERAUSQUIN:LEDOUX:12}. Let $x^*$ be the (or one of the) maximizer(s) of $\sup_{x\in\cG}\mu(A_x)$. If $\mu(A_{x^*})>R^{-\frac12}$, then the result holds as $C \geq 1$. So we assume $\mu(A_{x^*})\leq R^{-\frac12}$. By using Corollary \ref{cor:talagrand} and Lemma \ref{lemma:Dx-to-influences},
\begin{align}
\mu(A)(1-\mu(A))
&\leq C\abs{\supp(A)}\frac{\norm{D_{x^*}\ind_A}_2^2}{1+\log\left(\norm{D_{x^*}\ind_A}_2/\norm{D_{x^*}\ind_A}_1\right)}	\\
&\leq C\abs{\supp(A)} \frac{2\mu(A_{x^*})}{1+\log\left(\frac{\alpha}{2}\mu(A_{x^*})^{-\frac12}\right)}.
\end{align}
By using the upper bounds on $\mu(A_{x^*})$, the result follows.
\end{proof}

\section{Log-Sobolev inequality from Talagrand's inequality}\label{section:reverse-talagrand}
To prove Theorem \ref{thm:reverse-talagrand} we follow the proof of Proposition 1 in \cite{BOBKOV:HOUDRE:99}, where the analogue statement was shown for continuous spaces. Our discrete derivative operators $D_x$ force slightly more complicated arguments, but the general structure of the proof is unchanged. In contrast to the previous sections here we rely on various Orlicz norms and their properties. For basic properties of Orlicz spaces the reader is referred for example to \cite{KRASNOSELSKII:RUTITSKII:61, LEONARD:07}.

We say $\varphi:\bR\to\bR\cup\{\infty\}$ is a Young function if it is convex, even, $\varphi(0)=0$, and $\lim_{x\to\infty}\varphi(x)=\infty$.
The norm $\norm{\cdot}_\varphi$ is defined just as in \eqref{eq:Orlicz-norm}, that is
\[ \norm{f}_\varphi := \inf\left\{ a>0 : \int \varphi\left(\frac{f}{a}\right)\,d\mu\leq 1 \right\}. \] 
\begin{lemma}\label{lemma:Orlicz-norm-estimate}
 Let $\varphi$ be a Young function and assume $\int \varphi(f)\,d\mu\leq K$. Then $\norm{f}_\varphi\leq \max(1,K)$.
\end{lemma}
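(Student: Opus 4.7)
The plan is to split into two cases depending on whether $K\leq 1$ or $K>1$, and in each case to exhibit an explicit $a$ for which $\int \varphi(f/a)\,d\mu \leq 1$, thereby bounding the Orlicz norm from above by $a$.

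First, if $K\leq 1$, then taking $a=1$ directly in the definition \eqref{eq:Orlicz-norm} gives $\int \varphi(f)\,d\mu\leq K\leq 1$, so $\norm{f}_\varphi\leq 1=\max(1,K)$. This case is immediate.

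The substantive case is $K>1$, where I want to show $\norm{f}_\varphi\leq K$, i.e.\ $\int \varphi(f/K)\,d\mu\leq 1$. The key observation is that a Young function satisfies the sub-homogeneity property $\varphi(tx)\leq t\varphi(x)$ for all $t\in[0,1]$ and $x\in\bR$. This follows from convexity together with $\varphi(0)=0$: writing $tx = t\cdot x + (1-t)\cdot 0$, convexity yields $\varphi(tx)\leq t\varphi(x)+(1-t)\varphi(0)=t\varphi(x)$. Applying this with $t=1/K\in(0,1)$ gives $\varphi(f/K)\leq \varphi(f)/K$ pointwise, and integrating,
\begin{equation*}
\int \varphi(f/K)\,d\mu \leq \frac{1}{K}\int \varphi(f)\,d\mu \leq \frac{K}{K}=1.
\end{equation*}
Hence $\norm{f}_\varphi\leq K=\max(1,K)$.

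There is no real obstacle here; the only thing to be careful about is to note explicitly that convexity plus $\varphi(0)=0$ is precisely what gives the sub-homogeneity needed to rescale the hypothesis from $\int\varphi(f)\,d\mu\leq K$ down to $\int\varphi(f/K)\,d\mu\leq 1$. (The reverse direction, super-homogeneity for $t\geq 1$, would not help and is not needed.) Combining the two cases yields the stated bound $\norm{f}_\varphi\leq \max(1,K)$.
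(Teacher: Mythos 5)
Your proof is correct and follows essentially the same route as the paper: the case $K\leq 1$ is immediate, and for $K>1$ the paper likewise invokes convexity to get $\int \varphi(f/K)\,d\mu \leq \frac1K\int\varphi(f)\,d\mu\leq 1$; you merely make explicit the sub-homogeneity step $\varphi(tx)\leq t\varphi(x)$ via $\varphi(0)=0$, which the paper leaves implicit.
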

\begin{proof}
 If $K\leq 1$, then $\int \varphi( f)\,d\mu \leq 1$.
 If $K>1$, by convexity, 
 \[ \int \varphi\left(\frac f K \right)\,d\mu \leq \frac1K \int  \varphi(f)\,d\mu \leq 1. \qedhere \]
\end{proof}
\begin{lemma}\label{lemma:Orlicz-Phi-2}
 Let $f\in L^2(\mu)$. Then
 \[ \norm{f}_\Phi \leq 2\norm{f}_2 .\]
\end{lemma}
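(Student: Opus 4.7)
The plan is to observe that $\Phi$ is dominated by the square function and then apply Lemma \ref{lemma:Orlicz-norm-estimate} (or argue directly from the definition of the Orlicz norm).

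First, I note the elementary pointwise bound $\Phi(x) \leq x^2$ valid for all $x \in \bR$: indeed, $\log(e+|x|) \geq \log e = 1$, so
\begin{align}
\Phi(x) = \frac{x^2}{\log(e+|x|)} \leq x^2.
\end{align}
Integrating this inequality applied to $f$ yields $\int \Phi(f)\,d\mu \leq \int f^2\,d\mu = \norm{f}_2^2$.

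Next I would apply Lemma \ref{lemma:Orlicz-norm-estimate} with $\varphi = \Phi$ and $K = \norm{f}_2^2$, but since that lemma gives $\max(1,\norm{f}_2^2)$, which is not quite the bound we want, it is cleaner to work directly. Using homogeneity, set $a := 2\norm{f}_2$ (we may assume $\norm{f}_2>0$, otherwise the statement is trivial). Then
\begin{align}
\int \Phi\!\left(\frac{f}{a}\right)\,d\mu \leq \int \frac{f^2}{a^2}\,d\mu = \frac{\norm{f}_2^2}{4\norm{f}_2^2} = \frac14 \leq 1,
\end{align}
so by definition of the Orlicz norm \eqref{eq:Orlicz-norm}, $\norm{f}_\Phi \leq a = 2\norm{f}_2$.

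There is no real obstacle here; the lemma is essentially a one-line comparison, and the factor $2$ on the right-hand side is slack (in fact the same argument yields $\norm{f}_\Phi \leq \norm{f}_2$). The role of the lemma is purely to translate $L^2$-control into Orlicz-norm control when needed in the main argument of Section \ref{section:reverse-talagrand}.
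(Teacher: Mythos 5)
Your proof is correct and follows essentially the same route as the paper: a pointwise comparison of $\Phi$ with the square function followed by the definition of the Orlicz norm (the paper uses $\Phi(x)\leq x^2+\abs{x}$ after normalizing $\norm{f}_2=1$ and invokes Lemma \ref{lemma:Orlicz-norm-estimate}, which is why it only gets the constant $2$). Your sharper bound $\Phi(x)\leq x^2$ even yields $\norm{f}_\Phi\leq\norm{f}_2$, which of course implies the stated inequality.
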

\begin{proof}
 By homogeneity assume $\norm{f}_2=1$. As $\Phi(x)\leq x^2+\abs{x}$, we have $\int \Phi(f)\,d\mu\leq 2$, and Lemma \ref{lemma:Orlicz-norm-estimate} completes the proof.
\end{proof}
\begin{lemma}\label{lemma:Orlicz-Phi-Holder}
 For measurable $f,g:\widetilde\Omega\to\bR$ on some Polish space $\widetilde\Omega$, $\norm{fg}_\Phi\leq 24\norm{f}_{e^{x^2}-1}\norm{g}_2$.
\end{lemma}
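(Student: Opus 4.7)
The plan is to reduce the claim, after rescaling, to a single pointwise inequality between $\Phi$, the sub-Gaussian function $e^{u^2}-1$ and the quadratic $v^2$. By homogeneity of the Orlicz norms we may normalise $\norm{f}_{e^{x^2}-1}=\norm{g}_2=1$, so that $\int(e^{f^2}-1)\,d\mu\le 1$ and $\int g^2\,d\mu\le 1$. It then suffices, via Lemma \ref{lemma:Orlicz-norm-estimate}, to produce a universal constant $K$ with $\int\Phi(fg)\,d\mu\le K$, from which $\norm{fg}_\Phi\le\max(1,K)$.

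The key step I would prove is the pointwise Young-type inequality
\begin{align}\label{eq:phi-xy-bound}
 \Phi(uv)\le 2\bigl(e^{u^2}-1\bigr)+2v^2,\qquad u,v\in\bR,
\end{align}
by a dichotomy on whether $\log(e+|uv|)\ge u^2/2$ or not. In the first case, $\Phi(uv)=u^2v^2/\log(e+|uv|)\le 2v^2$ immediately; this always applies when $u^2\le 2$, since $\log(e+|uv|)\ge 1$. In the second case, necessarily $u^2>2$ and $|uv|<e^{u^2/2}$, hence $u^2v^2< e^{u^2}$; combined with $\log(e+|uv|)\ge 1$ this yields $\Phi(uv)\le e^{u^2}$, and since $u^2>2$ forces $e^{u^2}\ge 2$ we conclude $\Phi(uv)\le 2(e^{u^2}-1)$. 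In both cases \eqref{eq:phi-xy-bound} holds.

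Integrating \eqref{eq:phi-xy-bound} against $\mu$ with $u=f$, $v=g$ gives $\int\Phi(fg)\,d\mu\le 2+2=4$, and Lemma \ref{lemma:Orlicz-norm-estimate} then delivers $\norm{fg}_\Phi\le 4$. Undoing the normalisation via homogeneity yields $\norm{fg}_\Phi\le 4\,\norm{f}_{e^{x^2}-1}\norm{g}_2$, which is comfortably inside the stated bound of $24$.

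The only genuine subtlety is the case-split in \eqref{eq:phi-xy-bound}, specifically the regime $u^2>2$ with small $|uv|$: one has to exploit simultaneously that the numerator $u^2v^2$ is dominated by $e^{u^2}$ and that the denominator $\log(e+|uv|)$ remains bounded below by $1$. Once that pointwise inequality is established, the rest of the proof is a one-line integration combined with the Orlicz-norm estimate of Lemma \ref{lemma:Orlicz-norm-estimate}, and no further Orlicz-space machinery (such as duality or generalised Hölder) is needed.
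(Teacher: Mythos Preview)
Your argument is correct and in fact yields the sharper constant $4$ in place of $24$. The case split in \eqref{eq:phi-xy-bound} is clean: in the second regime you correctly use $|uv|<e^{u^2/2}$ together with $\log(e+|uv|)\ge1$ to get $\Phi(uv)\le e^{u^2}\le 2(e^{u^2}-1)$, the last step being valid since $u^2>2$ forces $e^{u^2}\ge2$. The passage from $\int\Phi(fg)\,d\mu\le4$ to $\norm{fg}_\Phi\le4$ via Lemma~\ref{lemma:Orlicz-norm-estimate} and the final use of homogeneity are unproblematic.

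As for the comparison: the paper does not give its own proof of this lemma but simply cites Lemma~7 of Bobkov--Houdr\'e, so your self-contained argument goes beyond what the paper provides. The Bobkov--Houdr\'e proof proceeds via a generalised H\"older inequality for Orlicz norms (controlling $\norm{fg}_\Phi$ by $\norm{f}_{\Phi_1}\norm{g}_{\Phi_2}$ for suitable complementary-type Young functions), which is more systematic but requires more Orlicz-space machinery. Your approach trades that structure for a direct pointwise Young-type bound, which is shorter, elementary, and happens to give a better constant; the price is that it is tailored to this specific pair of Young functions rather than being an instance of a general principle.
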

For a proof, see \cite{BOBKOV:HOUDRE:99}, Lemma 7.
\begin{lemma}\label{lemma:Orlicz-Dx}
 Let $(\mu\times\mu_x)$ denote the probability measure on $\Omega\times\Omega$ given by $(\mu\times\mu_x)(d\eta,d\xi)=\mu_{\eta,x}(d\xi)\mu(d\eta)$, and for $f:\Omega\to\bR$ let $f_2:\Omega\times\Omega\to\bR$, $f_2(\eta,\xi) = f(\xi)-f(\eta)$. Then
 \[ \norm{D_x f}_{\Phi;\mu} \leq \norm{f_2}_{\Phi;(\mu\times\mu_x)}, \]
 where the Orlicz norms are to be understood on the measure spaces $(\Omega, \mu)$ and $\left(\Omega\times\Omega, (\mu\times\mu_x)\right)$ respectively.
\end{lemma}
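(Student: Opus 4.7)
The plan is to prove this by Jensen's inequality applied to the convex function $\Phi$, exploiting the fact that $D_x f(\eta)$ is nothing but the average of $f_2(\eta,\cdot)$ with respect to the kernel $\mu_{x,\eta}$. Indeed, recalling that $\Psi_x f(\eta) = \int f(\xi)\,\mu_{x,\eta}(d\xi)$, we have
\begin{align}
 D_x f(\eta) = \Psi_x f(\eta) - f(\eta) = \int \bigl(f(\xi) - f(\eta)\bigr)\,\mu_{x,\eta}(d\xi) = \int f_2(\eta,\xi)\,\mu_{x,\eta}(d\xi).
\end{align}

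The main step is then the following: set $a := \norm{f_2}_{\Phi;(\mu\times\mu_x)}$ (and assume $a>0$, else the statement is trivial). By the definition of the Orlicz norm, $\int \Phi(f_2/a)\,d(\mu\times\mu_x) \leq 1$. Since $\Phi$ is convex (and in particular defined as a Young function), Jensen's inequality applied pointwise in $\eta$ with respect to the probability measure $\mu_{x,\eta}$ gives
\begin{align}
 \Phi\!\left(\frac{D_x f(\eta)}{a}\right) = \Phi\!\left(\int \frac{f_2(\eta,\xi)}{a}\,\mu_{x,\eta}(d\xi)\right) \leq \int \Phi\!\left(\frac{f_2(\eta,\xi)}{a}\right)\mu_{x,\eta}(d\xi).
\end{align}
Integrating this against $\mu(d\eta)$ and using the definition of $\mu\times\mu_x$ yields
\begin{align}
 \int \Phi\!\left(\frac{D_x f}{a}\right)\,d\mu \leq \int \Phi\!\left(\frac{f_2}{a}\right)\,d(\mu\times\mu_x) \leq 1,
\end{align}
which by the definition of the Orlicz norm means $\norm{D_x f}_{\Phi;\mu} \leq a = \norm{f_2}_{\Phi;(\mu\times\mu_x)}$.

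There is no real obstacle here: the entire argument is a one-line consequence of Jensen's inequality combined with the definitional characterization of the Orlicz norm via $\int \Phi(\cdot/a)\,d\mu \leq 1$. The only thing to verify is the convexity of $\Phi(x) = x^2/\log(e+|x|)$, which is part of the hypotheses of the Orlicz-norm framework introduced just after \eqref{eq:Orlicz-norm}. The same argument would in fact show $\norm{D_x f}_{\varphi;\mu} \leq \norm{f_2}_{\varphi;(\mu\times\mu_x)}$ for any Young function $\varphi$, so the statement is really a contraction property of averaging operators on Orlicz spaces.
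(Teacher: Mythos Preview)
Your proof is correct and is essentially identical to the paper's: both normalize so that $\norm{f_2}_{\Phi;(\mu\times\mu_x)}=1$ (you keep the constant $a$ explicit instead), apply Jensen's inequality for the convex function $\Phi$ with respect to the probability kernel $\mu_{x,\eta}$, and integrate in $\eta$ to conclude $\int \Phi(D_x f)\,d\mu\leq 1$.
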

\begin{proof}
 By homogeneity assume $\norm{f_2}_{\Phi;(\mu\times\mu_x)}=1$, hence $\int \Phi(f_2)\,d(\mu\times\mu_x)\leq 1$.
 For $D_x f$ we have, using Jensen's inequality,
 \begin{align}
  \int \Phi(D_x f)\,d\mu 
  &= \int \Phi\left(\int (f(\xi)-f(\eta)) \,d\mu_{x,\eta}\right) \,\mu(d\eta)	\\
  &\leq \int \int \Phi(f(\xi)-f(\eta)) \,\mu_{x,\eta}(d\xi)\mu(d\eta)	\\
  &= \int \Phi(f_2) \,d(\mu\times\mu_x) =1.
 \end{align}
 Therefore $\norm{D_x f}_{\Phi;\mu} \leq 1$.
\end{proof}

\begin{proof}[Proof of Theorem \ref{thm:reverse-talagrand}]
Let $\varphi(x):=x^2\log(1+x^2)$. In \cite{BOBKOV:GOETZE:99}, Proposition 4.1, it is shown that $\sup_{c\in\bR}\Ent_\mu\left((f+c)^2\right) \leq \frac{13}{4} \norm{f-\int f\,d\mu}_\varphi^2$.
 To prove \eqref{eq:log-Sobolev} it therefore suffices to show that $\norm{f-\int f\,d\mu}_\varphi^2\leq C'\cE(f,f)$.
 
 Let $f$ be bounded, $\int f\,d\mu=0$ and $\cE(f,f) = \frac{1}{2C}$, where $C$ the constant from \eqref{eq:reverse-talagrand-assumption}. By \eqref{eq:reverse-talagrand-assumption}, Lemma \ref{lemma:Orlicz-Phi-2} and Jensen's inequality,
 \begin{align}\label{eq:reverse-talagrand-1}
  \int f^2\,d\mu &\leq C\sum_{x\in\cG}\norm{D_x f}^2_\Phi \leq 4 C \sum_{x\in\cG}\norm{D_x f}_2^2 \leq 8C \cE(f,f) = 4.
 \end{align}
 Define $g:\Omega\to\bR$ as $g(\eta)=f(\eta)\sqrt{\log(1+f(\eta)^2)}$. By Cauchy-Schwarz's inequality and \eqref{eq:reverse-talagrand-1},
 \begin{align}
  \left(\int g \,d\mu\right)^2 &\leq \int f^2 \,d\mu \int \log(1+f^2)\,d\mu \\
  &\leq \int f^2 \,d\mu \log\left(1+\int f^2\,d\mu\right) \leq 4\log5. \label{eq:reverse-talagrand-2}
 \end{align}
 Since $\abs{\left(x\sqrt{\log(1+x^2)}\right)'}\leq 2\sqrt{\log(1+x^2)}$, we can use the mean value theorem to obtain the estimate
 \begin{align}\label{eq:reverse-talagrand-g}
  \abs{g(\eta)-g(\xi)}\leq 2\sqrt{\log(1+\max(f(\eta)^2,f(\xi)^2))}\abs{f(\eta)-f(\xi)}.
 \end{align}
Using Lemma \ref{lemma:Orlicz-Dx}, \eqref{eq:reverse-talagrand-g} and Lemma \ref{lemma:Orlicz-Phi-Holder},
\begin{align}
 \norm{D_x g}_\Phi 
 &\leq \norm{g_2}_{\Phi;(\mu\times\mu_x)}	\\
 &\leq \norm{2\sqrt{\log(1+\max(f(\eta)^2,f(\xi)^2))}\abs{f(\xi)-f(\eta)}}_{\Phi;(\mu\times\mu_x)}	\\
 &\leq 48 \norm{\sqrt{\log(1+\max(f(\eta)^2,f(\xi)^2))}}_{e^{x^2}-1;(\mu\times\mu_x)}	\\
 &\qquad\cdot\norm{f(\xi)-f(\eta)}_{L^2(\mu\times\mu_x)}
\end{align}
We have $\norm{f(\xi)-f(\eta)}_{L^2(\mu\times\mu_x)}^2 = \int \Psi_x(f-f(\eta))^2(\eta)\,\mu(d\eta)$.
To estimate the Orlicz norm, using $\max(a^2,b^2)\leq 2a^2+2(b-a)^2$, \eqref{eq:reverse-talagrand-1} and $\cE(f,f)\leq \frac{1}{2C}<\frac12$,
\begin{align}
 &\int (e^{x^2}-1)\circ \sqrt{\log(1+\max(f(\eta)^2,f(\xi)^2))} (\mu\times\mu_x)(d\eta d\xi)	\\
 &= \int \int \max(f(\eta)^2,f(\xi)^2) \,\mu_{\eta,x}(d\xi)\mu(d\eta)	\\
 &\leq 2\int f(\eta)^2\mu(d\eta)+ 2\int \int (f(\xi)-f(\eta))^2 \,\mu_{\eta,x}(d\xi)\mu(d\eta)	\\
 &\leq 8 +2 \cE(f,f) \leq 10.
\end{align}
Hence, by Lemma \ref{lemma:Orlicz-norm-estimate}, $\norm{\sqrt{\log(1+\max(f(\eta)^2,f(\xi)^2))}}_{e^{x^2}-1;(\mu\times\mu_x)}\leq 10$.
In total we obtain the estimate
\begin{align}\label{eq:reverse-talagrand-3}
 \sum_{x\in\cG} \norm{D_x g}_\Phi^2 \leq 480^2 \sum_{x\in\cG} \int \Psi_x(f-f(\eta))^2(\eta)\,\mu(d\eta) = 2\cdot 480^2 \cE(f,f).
\end{align}
To finish the proof, using \eqref{eq:reverse-talagrand-2}, \eqref{eq:reverse-talagrand-assumption} and \eqref{eq:reverse-talagrand-3}, 
\begin{align}
 \int \varphi(f)\,d\mu 
 &= \int g^2\,d\mu = \Var_\mu(g)+\left(\int g \,d\mu\right)^2	\\
 &\leq 4\log5 + C\sum_{x\in\cG}\norm{D_x g}^2_\Phi	\\
 &\leq 4\log5+2\cdot 480^2 < 700^2.
\end{align}
By Lemma \ref{lemma:Orlicz-norm-estimate}, $\norm{f}_\varphi\leq 700^2$. For general $f$ we therefore have
\begin{align}
 \Ent_\mu(f^2) \leq \frac{13}{4} \norm{f-\int f\,d\mu}^2_\varphi < \frac{13}{4} 700^4 C\cE(f,f).
\end{align}
\end{proof}

\section{Graphical construction}\label{section:graphical-construction}
In preparation of the proof of Theorem \ref{thm:nice} we start with the basic graphical construction of the dynamics. It is classical that a wide range if interacting particle systems and related models admit a graphical construction. Based on one a Poisson point process on $\cG$ and additional randomness one can construct the law of the process. This connection is widely used. We will also make use of this underlying structure, which is one of the reasons we required that the generator $L$ is of the special form \eqref{eq:generator}. Intuitively speaking, the law of the Markov process is given by a Poisson point process of intensity 1, where at each point a resampling event takes place, that is a configuration $\eta$ is replaced by a configuration $\xi$, where $\xi$ is drawn independently from $\mu_{x,\eta}$. We will need some more subtle facts of the graphical construction, and in this section we will introduce the necessary notation and rigorous definition.

As a motivation, note that from \eqref{eq:Psix} follows that the generator $L$ can be written as
\[ Lf  = \sum_{x\in\cG} [\Psi_x f -f]. \]
Less immediate is the fact that the semi-group $P_t$ can also be written in terms of $\Psi_x$. This is our aim, but to do so requires a few steps.

 For a finite set $A=\{(x_i,t_i) : 1\leq i \leq n, t_1 < t_2 < ... < t_n \}\subset\cG\times[0,T]$ we can define $\Psi_A$ as
\[ \Psi_A:=\Psi_{x_1}\Psi_{x_2}\cdots\Psi_{x_n}. \]
To extend this definition to infinite sets $A$ requires some more work, as we no longer have a well-ordered sequence. Before going into the details we state the aim of this section, which is how to represent the semi-group via a Poisson point process and $\Psi$:
\begin{lemma}\label{lemma:graphical-construction-b}
Let $\oNt$ be a Poisson point process with unit intensity on $\cG\times[0,t]$. The semi-group $(P_t)$ is given by 
\begin{align}
 P_t f = \int \Psi_\oNt f \,d\oNt. 
\end{align} 
\end{lemma}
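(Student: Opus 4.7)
My plan is to build the formula first in finite volume, where everything reduces to an ordinary pure-jump Markov chain driven by a Poisson clock, and then to pass to infinite volume by using the finite range of the interaction to localize the graphical construction.

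\textbf{Finite volume.} Fix a finite $\Lambda\subset\cG$ and let $L^\Lambda:=\sum_{x\in\Lambda}(\Psi_x-\Id)$. Since $L^\Lambda$ is bounded, $P_t^\Lambda:=e^{tL^\Lambda}$ is unambiguous, and it coincides with the semigroup of the pure-jump chain whose jump times form a rate-$|\Lambda|$ Poisson process with i.i.d.\ uniform marks in $\Lambda$, each mark $x$ triggering the resampling kernel $\mu_{x,\eta}$. Encoding this clock as the restricted process $\overline{N}^\Lambda:=\oNt\cap(\Lambda\times[0,t])$ (a.s.\ finitely many atoms at distinct times), the operator $\Psi_{\overline{N}^\Lambda}$ is well defined by chronological composition as introduced above the lemma. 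The Dyson--Duhamel expansion
\[ e^{tL^\Lambda} = e^{-|\Lambda|t}\sum_{n\geq 0}\frac{t^n}{n!}\bigl(\textstyle\sum_{x\in\Lambda}\Psi_x\bigr)^n \]
then matches term-by-term with the conditional law of $\overline{N}^\Lambda$ given its size, and yields $P_t^\Lambda f = \int \Psi_{\overline{N}^\Lambda}f\,d\oNt$.

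\textbf{Localization and passage to the limit.} For local $f$ depending on coordinates in a finite $F\subset\cG$, I would trace the dependence backward in time: set $F_t:=F$ and, scanning $s$ from $t$ down to $0$, replace the current dependency set by $(F_s\setminus\{x\})\cup(x+\cN)$ whenever a Poisson atom $(x,s)$ with $x$ in the current set is encountered. By finite range, $\Psi_{\overline{N}^\Lambda}f(\eta)$ depends only on those atoms whose site lies in the dependency set at the corresponding time, and stabilizes once $\Lambda$ contains the random spatial projection $D\subset\cG$ of this collection of relevant atoms. One verifies $|D|<\infty$ a.s.\ by dominating the growth of $D$ by a pure-birth (Yule-type) branching process in which each current site attracts new atoms at rate $1$ and every hit adds at most $|\cN|$ new sites; such a process is a.s.\ finite at every deterministic time. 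This makes $\Psi_{\oNt}f(\eta)$ well defined a.s.\ for local $f$. Taking $\Lambda_n\uparrow\cG$, $P_t^{\Lambda_n}f\to P_t f$ for local $f$ by standard strong convergence of IPS semigroups on a core of local functions, while the localization together with dominated convergence gives $\int\Psi_{\overline{N}^{\Lambda_n}}f\,d\oNt\to\int\Psi_{\oNt}f\,d\oNt$. The lemma then follows on local bounded $f$ and extends by density to $L^2(\mu)$.

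\textbf{Main obstacle.} The crux is the localization step: establishing a.s.\ finiteness of the backward dependence cluster over $[0,t]$, which is what gives $\Psi_{\oNt}f$ a meaning for local $f$ beyond finite volume. The finite range of the interaction is essential, and branching-process domination is the natural tool, but the pathwise bookkeeping of the backward-in-time recursion requires care to properly identify the two limits (of the generators and of the graphical representation).
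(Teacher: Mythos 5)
Your argument is correct, but it follows a genuinely different route from the paper. The paper never leaves infinite volume: having already defined $\Psi_A$ for locally finite $A$ (via the partial order $<_A$ and the sets $A_{<x}$), it defines $\widetilde P_t f:=\int\Psi_{\oNt}f\,d\oNt$ directly, uses the factorization Lemma \ref{lemma:graphical-construction-a} (applied to $\oNt$ and $\oN_{[t,t+s]}$) to see that $(\widetilde P_t)$ is a Markov semigroup, computes its generator to be $\sum_x(\Psi_x-\Id)=L$, and concludes by uniqueness of the semigroup with a given generator. You instead approximate by finite volumes $\Lambda$, identify $e^{tL^\Lambda}$ with the Poisson representation through the Dyson/Poissonization expansion (which is a clean and complete computation), and then pass to the limit via the backward dependence cluster plus Trotter--Kurtz type convergence $P_t^{\Lambda_n}f\to P_tf$ on local functions. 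What your route buys is an explicit justification of the a.s.\ finiteness of the backward cluster (your Yule-process domination), a point the paper subsumes in its standing assumption that the Poisson realization is locally finite for the order $<_A$; what it costs is the reliance on two external inputs, namely that local functions form a core and that finite-volume semigroups converge strongly, whereas the paper's argument needs only the (comparable, but lighter) fact that $L$ determines $P_t$ uniquely, and gets the semigroup property for free from the time-ordered factorization $\Psi_{A\cup B}=\Psi_A\Psi_B$. The remaining small gaps in your write-up --- checking that $\mu$ is invariant for the kernel $\int\nu_{\oNt,\eta}\,d\oNt$ so that the identity extends from bounded local $f$ to $L^2(\mu)$ by contraction, and noting that once $\Lambda_n$ contains the cluster the relevant atom sets of $\overline{N}^{\Lambda_n}$ and $\oNt$ coincide --- are routine and you essentially indicate them.
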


Let $A$ be a countable subset of $\cG\times[0,T]$ with no two points at the same time. A partial order $<_A$ on $\cG\times[0,T]$ is defined as follows: $(x,t)<_A (y,s)$ iff either $x=y$ and $t<s$ or there exists a finite subset $\{(x_1,t_1),\ldots(x_K,t_K)\}\subset A$ such that $t<t_{1}<t_{2}<\ldots<t_{K}\leq s$ and $x_{{m-1}} \in x_{m} + \cN$, $2\leq m \leq K$, as well as $x\in x_{1}+\cN$ and $x_{K}=y$. We write $A_{<x}:=\{(y,t)\in A:(y,t)<_A (x,T)\}$. We call $A$ \emph{locally finite}, if 
$\abs{A_{<x}}<\infty$ for all $x\in\cG$. 

For a locally finite $A\subset \cG\times[0,T]$ and a local function $f:\Omega\to\bR$ we define
\begin{align}
 \Psi_A f &:= \Psi_{A_f} f,\qquad A_f:=\bigcup_{x\in\supp(f)}A_{<x}.
\end{align}
To extend this definition to non-local functions, note that $\Psi_A$ induces a probability measure $\nu_{A,\eta}$ on $\Omega$ by $\int f \,d\nu_{A,\eta} = \Psi_A f(\eta)$, $f$ local. We then define $\Psi_A$ for non-local functions via 
\begin{align}
 \Psi_A f(\eta) := \int f \,d\nu_{A,\eta}.
\end{align}
\begin{lemma}\label{lemma:graphical-construction-a}
Let $A,B$ be locally finite sets so that $A\leq_{A\cup B} B$, that is for no $a\in A, b\in B$ the relation  $b<_{A\cup B} a$ holds. Then
\begin{align}
 \Psi_{A\cup B} = \Psi_A \Psi_B. \label{eq:graphical-1}
\end{align}
\end{lemma}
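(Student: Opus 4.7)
My approach is to reduce the identity to a finite combinatorial rearrangement of $\Psi$-operators and then apply pairwise commutation on spatially separated sites.

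\emph{Reduction to finite sets.} Both $\Psi_{A \cup B}$ and $\Psi_A \Psi_B$ are determined by their action on local functions via the induced measures $\nu_{\cdot,\eta}$, so it suffices to verify the identity on local $f$. Given such $f$, I would set $\Gamma := (A \cup B)_f$, which is finite by local finiteness, and split $\Gamma = A' \sqcup B'$ with $A' := A \cap \Gamma$ and $B' := B \cap \Gamma$. By definition, $\Psi_{A \cup B} f = \Psi_\Gamma f = \Psi_{A' \cup B'} f$. For the other side, I would check that $\Psi_B f = \Psi_{B'} f$ (any $B$-chain from a mark to $\supp(f)$ is also an $(A \cup B)$-chain, hence lives in $\Gamma$), and that the finite set of $A$-marks causally preceding the effective support of $\Psi_{B'} f$ coincides with $A'$, yielding $\Psi_A \Psi_B f = \Psi_{A'} \Psi_{B'} f$. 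The hypothesis $A \leq_{A \cup B} B$ descends to $A' \leq_{A' \cup B'} B'$, since removing marks only weakens the partial order.

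\emph{Finite-case rearrangement.} Ordering $A' \cup B'$ by time as $(w_1,u_1),\ldots,(w_n,u_n)$, I have $\Psi_{A' \cup B'} = \Psi_{w_1} \cdots \Psi_{w_n}$. To convert this into $\Psi_{A'} \Psi_{B'}$ (each internally time-ordered), I would perform adjacent swaps on every pair with $(w_i,u_i) \in B'$ and $(w_{i+1},u_{i+1}) \in A'$. The hypothesis forces $(w_i,u_i) \not<_{A' \cup B'} (w_{i+1},u_{i+1})$; since no $(A' \cup B')$-mark lies strictly between the two times, the only candidate witnessing chain is the singleton $\{(w_{i+1},u_{i+1})\}$, whose failure gives $w_i \neq w_{i+1}$ and $w_i \notin w_{i+1} + \cN$. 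Taking $\cN$ symmetric (without loss of generality: replacing $\cN$ by $\cN \cup (-\cN)$ preserves the finite-range assumption), one also obtains $w_{i+1} \notin w_i + \cN$, and then $\Psi_{w_i}$ and $\Psi_{w_{i+1}}$ commute by a direct Fubini computation from \eqref{eq:Psix}: both iterated integrals reduce to the product resampling with independent draws from $\mu_{w_i,\eta}$ and $\mu_{w_{i+1},\eta}$.

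\emph{Main obstacle.} The adjacent-swap bookkeeping and the pairwise commutation are routine once the reduction is in place. The delicate step is the finite-set reduction: showing that $\Psi_A \Psi_B f$ genuinely factors through $\Psi_{A'} \Psi_{B'}$. One must track that the effective support of the non-local function $\Psi_B f$, determined by the $B$-chain structure emanating from $\supp(f)$, is small enough that only $A$-marks already in $\Gamma$ can influence $\Psi_A(\Psi_B f)$. This is where the precise form of the partial order is genuinely used: $<_{A \cup B}$-chains passing through $A$-marks must be compatible with the splitting, which is exactly what the construction $\Gamma = (A \cup B)_f$ guarantees.
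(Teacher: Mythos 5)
Your plan is essentially the paper's own argument: reduce to local $f$, use the hypothesis to identify $B_f$ with the $B$-part of $(A\cup B)_f$ and the $A$-marks preceding $\supp(\Psi_B f)$ with the $A$-part, and then pass to the time-ordered product of the union — your explicit adjacent-swap/commutation step merely spells out what the paper uses silently in writing $\Psi_{A_{\Psi_B f}}\Psi_{B_f}f=\Psi_{(A\cup B)_f}f$, and your flagged ``main obstacle'' is exactly the identity the paper asserts from $A\leq_{A\cup B}B$. One caution: since the partial order $<_{A\cup B}$ is itself defined through $\cN$, replacing $\cN$ by $\cN\cup(-\cN)$ changes (strengthens) the hypothesis, so it is not literally a without-loss-of-generality move inside this lemma; the honest reading — needed both for your crossing-pair commutation and for the paper's identification of $A_{\Psi_B f}$ (which uses a concatenation of an $A$-chain with a $B$-chain that fails for one-sided neighbourhoods) — is that $\cN$ is taken symmetric throughout, which is harmless for the application to time-separated Poisson marks.
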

\begin{proof}
Let $f$ be a local function. By the nearest-neighbour property of $\Psi_x$ and the fact that $B$ is locally finite the function $\Psi_B f$ is local as well, with support given by
\begin{align}
 \supp(\Psi_B f) = \left\{x \in \cG : (x,0)<_B (y,T), y\in\supp(f)\right\}.
\end{align}
By construction 
\begin{align}
 \Psi_A\Psi_Bf = \Psi_{A_{\Psi_B f}}\Psi_{B_f} f .
\end{align}
From $A\leq_{A\cup B} B$ it then follows that
\begin{align}
 A_{\Psi_B f} = \left\{ (x,s)\in A : (x,s) <_{A\cup B} (y,T), y\in\supp(f) \right\}.
\end{align}
Hence 
\begin{align}
 \Psi_A\Psi_B f = \Psi_{A_{\Psi_B f}\cup B_f} f= \Psi_{(A\cup B)_f} f = \Psi_{A\cup B} f. 
\end{align} 
\end{proof}
We now have the tools to express the semi-group $P_t$ via the $\Psi_x$.
\begin{proof}[Proof of Lemma \ref{lemma:graphical-construction-b}]
Define
\[ \widetilde{P}_t := \int \Psi_\oNt f \,d\oNt .\]
Lemma \ref{lemma:graphical-construction-a} applied to $\oNt$ and $\oN_{[t,t+s]}$ shows that the operator family
is a semi-group, and easy calculations show that it is in fact a Markov semi-group whose generator is given by
\[ \widetilde{L}f = \sum_{x\in\cG} (\Psi_x f - f). \]
Since $\widetilde{L}=L$ the semi-groups must be the same as well.
\end{proof}

\section{Binary trees}\label{section:trees}
In this short section we will introduce binary trees and associated notation. The way binary trees are tied to the proof of Theorem \ref{thm:nice} is delegated to Sections \ref{section:iteration-steps} and \ref{section:full-iteration}.

A (rooted) \emph{tree} is a finite connected cycle-free graph with a distinguished root vertex, which we denote by 0. Let $d$ be the graph distance. Given a vertex $v$ in a tree $T$, an adjacent vertex $w\in T$ is said to be a \emph{child} of $v$ if $d(w,0)=d(v,0)+1$. Conversely, the \emph{parent} of a non-root vertex $v$ is the unique adjacent vertex $w$ with $d(w,0)=d(v,0)-1$. We call the the \emph{descendants} of a vertex $v$ the set of all vertices $w$ whose unique path to the root passes through $v$.

A \emph{binary tree} is a tree whose vertices have at most 2 children, with the additional information whether a child is the \emph{left} child or the \emph{right} child (i.e., a vertex can have no children, a left child, a right child, or a left and a right child). We call a binary tree \emph{full} if for each vertex the number of children is either 0 or 2. A vertex with no children is called a \emph{leaf}. With $\partial T$ we denote the set of all leaves of a tree $T$, and with $\oT=T\bs \partial T$ the set of interior vertices.

From now on we will only consider full binary trees. On such a tree $T$ we can introduce a well-ordering: 
\begin{wrapfigure}{r}{0.3\textwidth}
{\Tree [.0 [.-2 -3 -1 ] [.4 [.2 1 3 ] 5 ] ]}

\emph{A small tree with vertices identified by their embedding into $\bZ$}
\end{wrapfigure}
For any $v\in T$, the left child and all of its descendants are smaller than $v$, and the right child and all of its descendants are bigger than $v$. 

From this well-ordering we also obtain an embedding $\iota_\bZ$ of $T$ into $\bZ$, where we assume that $\iota_\bZ$ is order preserving, $\iota_\bZ(0)=0$, and $\iota_\bZ(T)=\{a,a+1,\dots,b\}$ for some $a,b\in\bZ$. Observe that in this ordering leaf vertices and interior vertices alternate, with leaves getting mapped onto odd numbers (with the exception of the tree where the root is already a leaf). 

Based on the embedding $\iota_\bZ$ we define what we mean by $v+1$: 
\begin{align*}
v+1 &:= \begin{cases}
         \iota_\bZ^{-1}(\iota_\bZ(v)+1), &\iota_\bZ(v)<b;	\\
         \infty, &\iota_\bZ(v)=b.
        \end{cases} 
\end{align*}
Note that we simply write $\infty$ for a virtual vertex to the right of all of $T$. Similarly we define $v-1$, with $-\infty$ for a virtual vertex to the left of $T$.

Let $\cT_n$ denote the set of full binary trees with exactly $n$ leaves, and $\cT=\bigcup_{n\geq 1} \cT_n$ the set of all full binary trees. Let $l(v)$ denote the left child of a vertex $v\in \oT$, and $r(v)$ its right child.

\section{\texorpdfstring{$T$}{T}-partitions and the basic iteration steps}\label{section:iteration-steps}
Now we will use binary trees to develop the core steps of the iteration scheme. Given a full binary tree $T$ and a real number $t\geq 0$ we define a $T$-partition of $[0,t]$ as a map $S:T \to [0,\infty[$ with $S(0)=t$ and $S(l(v))+S(r(v))=S(v)$ for all $v\in\oT$. Such a map represents a hierarchical partition: The original interval $[0,t]$ corresponds to the root vertex, and the interval is then split into two pieces where their respective lengths are represented by the values $S(l(0))$ and $S(r(0))$. Those intervals are then split further, with each $v\in \oT$ corresponding to a further refinement of the partition. The leaves encode the final partition, and for a vertex $v\in T$ we write $\uS(v):= \sum_{w\in\partial T, w<v} S(w)$ and $\oS(v):= \sum_{w\in\partial T, w\leq v} S(w)$. In other words, the partition of the interval $[0,t]$ is given by $0=\uS(v_*)\leq \uS(v_*+2) \leq \uS(v_*+4) \leq ... \leq \uS(v^*) \leq \uS(\infty)=t$, where $v_*=\min\{v: v\in \partial T\}$, $v^*=\max\{v: v\in \partial T\}$

The set of all possible $T$-partitions of $[0,t]$ we call $\cS_{T,t}$. In addition to the $T$-partitions we also need maps $X:\oT\to\cG$, which we collect in the set $\cX_T$. The set $\cS_{T,t}$ plays a big role in the structure of the proofs. The contribution of $\cX_T$ is minor and is mostly used to simplify notation.

From now on we fix $t\geq 0$. Remember the operators $\Psi_A$ from the graphical construction. 
Given a tree $T$ as well as $v\in T$, $S \in \cS_{T,t}$, $X\in\cX_T$ and a realization of a Poisson point process $\oNt\subset \cG\times[0,t]$ we define a linear operator $\PsiSX(v,\oNt)$ on $L^p(\mu)$ by
\begin{align*}
  \PsiSX\left(v,\oNt\right)f = \begin{cases}
                   \Psi_{X(v)}f,\quad& v\in \oT;\\
                   \Psi_{\oN_{[\uS(v),\oS(v)[}}f, &v\in\partial T.
                  \end{cases}
\end{align*}
Based on $\PsiSX(v,\oNt)$, we write 
\[ 
\PsiSX(v,w,\oNt) = 
\begin{cases}
\PsiSX(v,\oNt)\PsiSX(v+1,\oNt)\dots\PsiSX(w,\oNt),\quad&v\leq w;\\
\Id, &v>w.
\end{cases} 
\]
 For notational convenience we set $\PsiSX(v,\infty,\oNt) = \PsiSX(v, \max\{ w : w\in T\},\oNt)$, $\PsiSX(-\infty,w,\oNt) = \PsiSX(\min\{v:v\in T\},w,\oNt)$ and $\PsiSX(\infty,\infty,\oNt)=\Id$.

For a function $g(\oNt,\eta)$ of a realization $\oNt$ of a Poisson point process on $\cG\times [0,t]$ and $\eta\in\Omega$, we define for $p\geq1$ the norm
\begin{align}
\normb{g}_{p} &:= \norm{\int \abs{g(\oNt,\cdot)} d\oNt}_p.
\end{align}

Now we define the key object to be used in the proof of Theorem \ref{thm:nice}.
Given a tree $T$, as well as $v\in T$ and $S\in \cS_{T,t}$, define
\begin{align*}
 %p(T,v,S) &:= 1 + \exp\left(-\frac2\rho \sum_{w\in \partial T, w<v} S(w) \right),\\
 I(T,v,S) &:= \sup_{X\in \cX_T}\sum_{x\in \cG} \normb{D_x \PsiSX[x+X](v,\infty,\cdot) f}^2_{p(\uS(v))}.
\end{align*}
For a tree $T\in\cT_{n+1}$ the expression $I(T,v,S)$ represents an error term of order $n$. The vertex $v$ is used to enumerate the different error terms which all correspond to the same tree $T$. Note that $I(T,\infty,S)=\sum_{x\in\cG}\norm{D_x f}_{p(t)}^2$, which is the goal for our estimation. The starting point, $\sum_{x\in\cG}\norm{D_x P_t f}_2^2$, can also be expressed by $I$: 
\begin{align}
\sum_{x\in\cG}\norm{D_x P_t f}_2^2 \leq I(\{0\},0,S)=I(\{0\},-\infty,S). \label{eq:I-at-0}
\end{align}
 This estimate follows from 
\begin{align}
\abs{D_x P_t f} &= \abs{ \int D_x \Psi_{\oNt} f d\oNt } \leq \int \abs{ D_x \Psi_{\oNt} f  } d\oNt,
\end{align}
which is obtained by Lemma \ref{lemma:graphical-construction-b} and Jensen's inequality.

The role of $S$ is not yet evident at this point, but it will become more clear in Lemma \ref{lemma:leaf} below.

In the following we will prove two lemmas which are the basic building blocks for the argument. Both lemmas estimate $I(T,v,S)$ by $I(T,v+1,S)$, but the estimates are different for leaves and interior vertices. If $v$ is an interior vertex, the statement is straightforward.
\begin{lemma}\label{lemma:interior}
Let $T\in\cT_n,n\geq2$, and $v\in \oT$, $S\in\cS_{T,t}$. Then
\[ I(T,v,S)\leq 9 I(T,v+1,S). \]
\end{lemma}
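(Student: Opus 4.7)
The plan is to peel off the outermost operator $\Psi_{x+X(v)}$ at the interior vertex $v$, estimate pointwise, and then apply a sharp Young inequality to re-absorb the error, with the factor $9$ emerging from the optimization.

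First, I would verify that the exponent $p(\cdot)$ is unchanged when moving from $v$ to $v+1$. Since leaves and interior vertices alternate under the embedding $\iota_{\bZ}$, and an interior vertex cannot be rightmost in $T$, we have $v+1\in\partial T$, and moreover
\[ \uS(v+1)=\sum_{w\in\partial T,\, w<v+1}S(w)=\sum_{w\in\partial T,\, w<v}S(w)=\uS(v), \]
because $v$ itself is not a leaf. Hence $p:=p(\uS(v))=p(\uS(v+1))$ on both sides of the claimed inequality.

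Next, for fixed $x\in\cG$ and $X\in\cX_T$, set $z:=x+X(v)$ and $g:=\PsiSX[x+X](v+1,\infty,\cdot)f$, so that $\PsiSX[x+X](v,\infty,\cdot)f=\Psi_z g$. Using $\Psi_z g=g+D_z g$ gives
\[ D_x\Psi_z g = D_x g + \Psi_x D_z g - D_z g. \]
Since $\Psi_x$ is a Markov average one has $|\Psi_x h|\leq\Psi_x|h|$ pointwise; integrating against $\oNt$ and passing to the $L^p(\mu)$-norm, combined with the contraction $\norm{\Psi_x h}_p\leq\norm{h}_p$ (a consequence of \eqref{eq:Psi-invariant} and Jensen), yields
\[ \normb{D_x\Psi_z g}_p \leq \normb{D_x g}_p + 2\normb{D_z g}_p. \]
The sharp Young inequality $(a+2b)^2\leq 3a^2+6b^2$, optimal under the weight $\lambda=2$ in $4ab\leq\lambda a^2+4\lambda^{-1}b^2$, then gives
\[ \sum_{x\in\cG}\normb{D_x\Psi_{x+X(v)} g}_p^2 \leq 3\sum_{x\in\cG}\normb{D_x g}_p^2 + 6\sum_{x\in\cG}\normb{D_{x+X(v)} g}_p^2. \]

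Finally I would bound each term by $I(T,v+1,S)$. The first sum is $\leq I(T,v+1,S)$ directly by the definition applied to $X$. For the second, substitute $y:=x+X(v)$; since $g$ depends on $x$ through the shift at every interior $w>v$, the reindexing turns $\PsiSX[x+X](v+1,\infty,\cdot)f$ into $\PsiSX[y+\widetilde X](v+1,\infty,\cdot)f$, where $\widetilde X(w):=X(w)-X(v)$ for $w\in\oT,\, w>v$ (extended arbitrarily elsewhere). Since $\widetilde X\in\cX_T$, the second sum is likewise bounded by $I(T,v+1,S)$. Taking the supremum over $X$ on the left gives the claim. The main obstacle is the bookkeeping around the translation $y=x+X(v)$: one must check that the shift can be absorbed into a map in $\cX_T$, and that the exponent $p$ is preserved (hinging on $\uS(v)=\uS(v+1)$ for interior $v$). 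The calibration of Young's inequality to produce exactly the constant $9$ is then routine.
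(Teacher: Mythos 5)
Your proof is correct and follows essentially the same route as the paper: the same decomposition of $D_x\Psi_{x+X(v)}g$ into $D_xg$, $\Psi_xD_zg$ and $D_zg$, the contraction $\normb{\Psi_x h}_p\leq\normb{h}_p$, the reindexing absorbing the shift $X(v)$ into a new map in $\cX_T$, and the observation $\uS(v)=\uS(v+1)$ for interior $v$. The only cosmetic difference is that you merge the two $D_z$-terms before squaring via $(a+2b)^2\leq 3a^2+6b^2$, whereas the paper keeps three terms and uses $(a+b+c)^2\leq 3(a^2+b^2+c^2)$; both give the constant $9$.
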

The more complicated step is when $v$ is a leaf. In that case we have to introduce an extra term with a tree of a higher order, which is the reason for introducing the trees in the first place.
\begin{lemma}\label{lemma:leaf}
Let $T\in \cT_n$ and $v\in\partial T$, $S\in \cS_{T,t}$. Then
\[ I(T,v,S) \leq 2 I(T,v+1,S) + 2\abs{\cN}^2 S(v)(1+S(v)\abs{\cN})^2\int_0^{S(v)}I(T'_v, v'-1,S'_{r,v})\,dr, \]
where
\begin{itemize}
\item $T'_v$ is the tree in $\cT_{n+1}$ obtained by appending two leaves to the vertex $v$;
\item a vertex $w'\in T'_v$ represents the image of $w\in T$ when $T$ is embedded canonically into $T'_v$;
\item $S'_{r,v} \in \cS_{T'_v,t}$ is given by $S'_{r,v}(v'-1) = r$, $S'_{r,v}(v'+1) = S(v)-r$ and $S'_{r,v}(w') = S(w)$ for $w\in T$.
\end{itemize}
\end{lemma}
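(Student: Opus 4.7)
The plan is to split $D_x \Psi_{\oNSv} g$, with $g := \PsiSX[x+X](v+1,\infty,\cdot) f$, according to the random event
\[ A_x := \left\{\oNSv \cap \bigl((x+\cN)\times[\uS(v),\oS(v)[\bigr) = \emptyset\right\}. \]
The central observation is a commutation property on $A_x$: if no Poisson point at a site in $x+\cN$ occurs during $[\uS(v),\oS(v)[$, then in the graphical construction the value at $x$ is never resampled and no site's dynamics consults $\eta(x)$, so $\Psi_x$ commutes with $\Psi_{\oNSv}$, i.e.\ $D_x \Psi_{\oNSv} g = \Psi_{\oNSv} D_x g$ on $A_x$. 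Writing $D_x \Psi_{\oNSv} g = \ind_{A_x}\Psi_{\oNSv} D_x g + \ind_{A_x^c} D_x \Psi_{\oNSv} g$, taking absolute values, applying the triangle inequality inside $\normb{\cdot}_{p(\uS(v))}$, squaring with $(a+b)^2\leq 2a^2+2b^2$, and summing over $x\in\cG$ yields
\[ \sum_{x\in\cG}\normb{D_x \Psi_{\oNSv} g}^2_{p(\uS(v))} \leq 2\sum_{x\in\cG}\normb{\Psi_{\oNSv} D_x g}^2_{p(\uS(v))} + 2\sum_{x\in\cG}\norm{\int \ind_{A_x^c}\abs{D_x\Psi_{\oNSv} g}\,d\oNt}^2_{p(\uS(v))}. \]

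For the commutation term, positivity of $\Psi_{\oNSv}$ (yielding $\abs{\Psi_{\oNSv} h}\leq \Psi_{\oNSv}\abs{h}$), Fubini via the independence of $\oNSv$ from the remainder of $\oNt$, and the hypercontractive estimate $\norm{P_{S(v)}h}_{p(\uS(v))}\leq\norm{h}_{p(\uS(v+1))}$ combine to give $\normb{\Psi_{\oNSv} D_x g}_{p(\uS(v))} \leq \normb{D_x g}_{p(\uS(v+1))}$; taking the supremum over $X\in\cX_T$ then bounds the first sum by $I(T,v+1,S)$, producing the $2I(T,v+1,S)$ contribution.

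For the error term I will use Campbell's formula (Palm / Slivnyak--Mecke) to extract the first Poisson point of $\oNSv$ in the strip $(x+\cN)\times[\uS(v),\oS(v)[$, located at $(Y_x, \uS(v)+\tau_x)$ when it exists. Applying the commutation on $A_x$ to the sub-interval $[\uS(v),\uS(v)+\tau_x[$ (which by construction contains no Poisson point at a site in $x+\cN$), combined with Lemma \ref{lemma:graphical-construction-a}, produces on $A_x^c$ the factorisation
\[ D_x\Psi_{\oNSv} g \;=\; \Psi_{\oN_{[\uS(v),\uS(v)+\tau_x[}}\, D_x \Psi_{Y_x} \Psi_{\oN_{[\uS(v)+\tau_x,\oS(v)[}}\, g. \]
Parametrising over the location $y=Y_x\in x+\cN$ (which contributes $\abs{\cN}$ choices, corresponding to the $\abs{\cN}$ admissible values of $X(v')=y-x\in\cN$) and over the split time $r=\tau_x\in[0,S(v)[$, the integrand becomes exactly $D_x\PsiSX[x+X'](v'-1,\infty,\cdot) f$ for the canonical extension $X'\in\cX_{T'_v}$ with $X'(v')=y-x$. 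Cauchy--Schwarz applied to the $\abs{\cN}$--fold sum over $z=y-x$ and to the integral of length $S(v)$, together with the Poisson second-moment bound $(1+\abs{\cN}S(v))^2$ for the count $\abs{\oNSv\cap((x+\cN)\times[\uS(v),\oS(v)[)}$ (used to absorb $\ind_{A_x^c}$ in the Poisson expectation), collects the prefactor $\abs{\cN}^2 S(v)(1+S(v)\abs{\cN})^2$ and leaves the integrand $I(T'_v, v'-1, S'_{r,v})$ after taking the supremum over $X'$.

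The main technical obstacle is the bookkeeping of the error term: one must carefully interchange the Poisson expectation with the $L^{p(\uS(v))}(\mu)$ norm, control the conditioning of $\oNSv$ when the first point in $x+\cN$ is extracted via Palm, and verify that the graphical splitting by Lemma \ref{lemma:graphical-construction-a} reconstructs precisely the operator $\PsiSX[x+X'](v'-1,\infty,\cdot)$ that defines $I(T'_v,v'-1,S'_{r,v})$. The commutation identity on $A_x$ is the conceptual cornerstone: it simultaneously gives the reduction to $I(T,v+1,S)$ in the commutation term and enables the clean factorisation at $(Y_x,\uS(v)+\tau_x)$ in the error term.
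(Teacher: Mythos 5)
Your proposal is correct and follows essentially the same route as the paper: split according to the event that $\oNSv$ has no points in $(x+\cN)\times[\uS(v),\oS(v)[$, bound the commuting part via $\int \Psi_{\oNSv}\,d\oNSv = P_{S(v)}$ plus hypercontractivity to obtain $2I(T,v+1,S)$, and convert the complementary part into an inserted point at $(x+y,\uS(v)+r)$, identified with $D_x \PsiSX[x+X'](v'-1,\infty,\cdot)f$, followed by Cauchy--Schwarz over $y\in\cN$ and $r\in[0,S(v)]$; the paper merely realizes the point extraction by an explicit density bound $1+S(v)\abs{\cN}$ for the Poisson law conditioned on $A^c$ instead of the Mecke formula, which is an equivalent (in fact slightly lossier) bookkeeping of the same step. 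One small caution: the paper keeps $D_x$ outside the whole product in the error term, which is exactly the form required by the definition of $I(T'_v,v'-1,S'_{r,v})$, so your intermediate commutation of $D_x$ past $\Psi_{\oN_{[\uS(v),\uS(v)+\tau_x[}}$ is superfluous and should be skipped (or undone) before you drop the conditioning on ``no earlier strip points'', since for the unconditioned process appearing after the Mecke step the commuted and uncommuted forms are no longer equal.
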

\begin{corollary}\label{cor:leaf}
In the setting of Lemma \ref{lemma:leaf}, if $t\leq 1$, then
\[ I(T,v,S) \leq 2 I(T,v+1,S) + 2\abs{\cN}^2(1+\abs{\cN})^2 S(v)\int_0^{S(v)}I(T'_v, v'-1,S'_{r,v})\,dr. \]
\end{corollary}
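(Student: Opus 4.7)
The corollary is an immediate consequence of Lemma~\ref{lemma:leaf} together with the extra hypothesis $t \leq 1$. The only quantitative difference between the two statements is that the factor $(1+S(v)\abs{\cN})^2$ is replaced by $(1+\abs{\cN})^2$, and the rest of the right-hand side (the coefficient $2\abs{\cN}^2$, the factor $S(v)$ outside the parentheses, and the integral over $r \in [0,S(v)]$) is unchanged.

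The plan is therefore just to observe that, since $S \in \cS_{T,t}$ is a $T$-partition of $[0,t]$, we have $S(v) \geq 0$ for every $v \in \partial T$ and $\sum_{w \in \partial T} S(w) = t$. In particular, for any leaf $v \in \partial T$,
\[
 S(v) \leq \sum_{w\in\partial T} S(w) = t \leq 1.
\]
Hence $S(v)\abs{\cN} \leq \abs{\cN}$, which yields the pointwise bound
\[
 (1+S(v)\abs{\cN})^2 \leq (1+\abs{\cN})^2.
\]
Substituting this inequality into the estimate of Lemma~\ref{lemma:leaf} gives the claimed bound. I do not anticipate any obstacle here; the content of the corollary is merely to package Lemma~\ref{lemma:leaf} in a form where the dependence on $S(v)$ inside the parenthesised factor has been replaced by a constant depending only on $\abs{\cN}$, so that the bound can be iterated cleanly in Section~\ref{section:full-iteration} without having to track the shrinking $S(v)$'s through the recursion.
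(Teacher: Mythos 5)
Your proposal is correct and matches the paper, which states the corollary without a separate proof precisely because it follows immediately from Lemma \ref{lemma:leaf}: since $S\in\cS_{T,t}$ takes nonnegative values and the leaf values sum to $t$, one has $S(v)\leq t\leq 1$, hence $(1+S(v)\abs{\cN})^2\leq(1+\abs{\cN})^2$.
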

The remainder of this section is dedicated to the proofs of Lemmas \ref{lemma:interior} and \ref{lemma:leaf}.
\begin{proof}[Proof of Lemma \ref{lemma:interior}]
As $v$ is an interior vertex,
\begin{align*}
 D_x \PsiSX(v,\infty,\cdot)f &= D_x \Psi_{X(v)}\PsiSX(v+1,\infty,\cdot) f \\
 &= \Psi_x \Psi_{X(v)}\PsiSX(v+1,\infty,\cdot) f - \Psi_{X(v)}\PsiSX(v+1,\infty,\cdot) f .
\end{align*}
By adding and subtracting the appropriate terms and using the triangle inequality,
\begin{align}
 &\normb{\Psi_x \Psi_{X(v)}\PsiSX(v+1,\infty,\cdot) f - \Psi_{X(v)}\PsiSX(v+1,\infty,\cdot) f}_p \nonumber\\
   &\quad\leq \normb{\Psi_x \Psi_{X(v)}\PsiSX(v+1,\infty,\cdot) f - \Psi_{x}\PsiSX(v+1,\infty,\cdot) f}_p \label{eq:interior-a}\\
  &\qquad+\normb{\Psi_{x}\PsiSX(v+1,\infty,\cdot) f - \PsiSX(v+1,\infty,\cdot) f}_p \label{eq:interior-b}\\
  &\qquad+\normb{\Psi_{X(v)}\PsiSX(v+1,\infty,\cdot) f - \PsiSX(v+1,\infty,\cdot) f}_p \label{eq:interior-c}.
\end{align}
Note that the terms in \eqref{eq:interior-b} and \eqref{eq:interior-c} are $D_x\PsiSX(v+1,\infty,\cdot) f$ and $D_{X(v)}\PsiSX(v+1,\infty,\cdot) f$ respectively. Furthermore, with $z=0$ or $z=X(v)$,
\begin{align*}
&\sup_{X\in\cX_T}\sum_{x\in\cG} \normb{D_{x+z}\PsiSX[x+X](v+1,\infty,\cdot) f}_{p\left(\uS(v)\right)}^2\\
&= \sup_{X\in\cX_T}\sum_{x\in\cG} \normb{D_{x}\PsiSX[x-z+X](v+1,\infty,\cdot) f}_{p\left(\uS(v)\right)}^2 \\
&\leq I(T,v+1,S).
\end{align*}
The last inequality uses the fact that for interior vertices, $\uS(v)=\uS(v+1)$. Note that by \eqref{eq:Psi-invariant} and Jensen's inequality, $\normb{\Psi_x g}_p\leq \normb{g}_p$. Applying this fact to the function $D_{X(v)}\PsiSX f$ we can estimate \eqref{eq:interior-a} the same way as \eqref{eq:interior-b} and \eqref{eq:interior-c}.
To put everything together, we use above estimates and the fact that $(a+b+c)^2\leq 3(a^2+b^2+c^2)$:
\begin{align*}
 &\sup_{X\in\cX_T}\sum_{x\in\cG} \normb{D_x \PsiSX[x+X](v,\infty,\cdot)f}_{p\left(\uS(v)\right)}^2 \\
 &\quad\leq 3\sup_{X\in\cX_T}\sum_{x\in\cG}\normb{\Psi_x \Psi_{x+X(v)}\PsiSX[x+X](v+1,\infty,\cdot) f - \Psi_{x}\PsiSX[x+X](v+1,\infty,\cdot) f}_{p\left(\uS(v)\right)}^2 \\
  &\quad+3\sup_{X\in\cX_T}\sum_{x\in\cG}\normb{\Psi_{x}\PsiSX[x+X](v+1,\infty,\cdot) f - \PsiSX[x+X](v+1,\infty,\cdot) f}_{p\left(\uS(v)\right)}^2 \\
  &\quad+3\sup_{X\in\cX_T}\sum_{x\in\cG}\normb{\Psi_{x+X(v)}\PsiSX[x+X](v+1,\infty,\cdot) f - \PsiSX[x+X](v+1,\infty,\cdot) f}_{p\left(\uS(v)\right)}^2 \\
  &\quad\leq 9 I(T,v+1,S). \qedhere
\end{align*}
\end{proof}
\begin{proof}[Proof of Lemma \ref{lemma:leaf}]
We start by studying
\begin{align}
\int \abs{ D_x \PsiSX(v,\infty,\oNt)f }\,d\oNt 
& = \int \abs{ D_x \Psi_{\oNSv} \PsiSX(v+1,\infty,\oNt)f }d\oNt. \label{eq:leaf-1}
\end{align}
Denote by $A$ the event that $\oNSv\cap ((x+\cN) \times [\uS(v),\oS(v)[)=\emptyset$, i.e., the Poisson point process of the graphical construction has no points in a neighbourhood of $x$ in the time interval $[\uS(v),\oS(v)[$. In that case $D_x$ and $\Psi_\oNSv$ commute, and
\begin{align}
\eqref{eq:leaf-1} &=  \int \ind_A \abs{\Psi_{\oNSv} D_x\PsiSX(v+1,\infty,\oNt)f} \,d\oNt \nonumber \\
& \quad+ \int \ind_{A^c}\abs{ D_x \Psi_{\oNSv} \PsiSX(v+1,\infty,\oNt)f} \,d\oNt \nonumber\\
&\leq \int \ind_A \Psi_{\oNSv} \abs{D_x\PsiSX(v+1,\infty,\oNt)f} \,d\oNt \label{eq:leaf-2}\\
& \quad+ \int \ind_{A^c}\abs{ D_x \Psi_{\oNSv} \PsiSX(v+1,\infty,\oNt)f} \,d\oNt. \label{eq:leaf-3}
\end{align}
By construction $\PsiSX(v+1,\infty,\oNt)$ does not depend on $\oN_{[0,\oS(v)[}$. Therefore we can split the integration into the time intervals $[0,\oS(v)]$ and $[\oS(v),t]$ and integrate separately:
\begin{align}
\eqref{eq:leaf-2} &= \left(\int  \ind_A \Psi_{\oNSv} d\oNSv\right)\left[ \int \abs{D_x\PsiSX(v+1,\infty,\oN_{[\uS(v),t]})f} \, d\oN_{[\uS(v),t]}\right]		\\
&\leq P_{S(v)} \left[\int  \abs{D_x\PsiSX(v+1,\infty,\oN_{[0,t]})f} \, d\oN_{[0),t]}\right].
\end{align}
Here we used the fact that the term in square brackets is a non-negative function, so that we can drop the indicator over $A$ to increase the value and then use Lemma \ref{lemma:graphical-construction-b} to write the first term as the semi-group. 

The estimate of \eqref{eq:leaf-3} is more involved. Let $M$ be $ Poisson\left(S(v)\abs{\cN}\right)$-distributed and $Q_1,Q_2,...$ be independent and uniformly distributed on $(x+\cN)\times [\uS(v),\oS(v)[$. Then $\oNSv\cap ((x+\cN)\times [\uS(v),\oS(v)[)$ has the the same distribution as $\bigcup_{k=1}^M \{Q_k\}$. The event $A^c$ corresponds to the event $\{M\geq 1\}$. Therefore, at least the point $Q_1$ is present. The number of remaining points is no longer Poisson distributed, but given by a Poisson distribution conditioned on at least one point, minus 1. However, this distribution has a density $h$ with respect to the Poisson distribution which satisfies $\norm{h}_\infty \leq 1+S(v)\abs{\cN}$. Therefore, we can replace the distribution of the remaining points by a Poisson point process and obtain for any positive function $g$:
\begin{align}
&\int \ind_{A^c} g(\oNt) \,d\oNt = (1-e^{-\abs{N}S(v)}) \int g(\oNt) \,d\left[\oNt\;\middle|\;A^c\right]	\\
&\leq \frac{(1-e^{-\abs{N}S(v)})}{S(v)\abs{\cN}} \int_{\uS(v)}^{\oS(v)}\sum_{y\in\cN} \int g\left(\oNt\cup\{(x+y,r)\}\right)h\left(\oNt\cup\{(x+y,r)\}\right) \,d\oNt dr	\\
&\leq 1\cdot (1+S(v)\abs{\cN}) \int_0^{S(v)}\sum_{y\in\cN} \int g\left(\oNt\cup\{(x+y,\uS(v)+r)\}\right) \,d\oNt dr. \label{eq:Poisson-replacement}
\end{align}
We now consider the function $g(\oNt)=\abs{D_x \Psi_\oNSv \PsiSX(v+1,\infty,\oNt)f}$. Remember the definition of $T'_v$, $S'_{r,v}$ and define $X'_y \in \cX_{T'_v}$ by $X'_y(v')=y$ and $X'_y(w')=X(w)$ for $w\in T$. Then we have for almost every $\oNt$ 
\begin{align}
g(\oNt \cup \{(x+y,\uS(v)+r)\} ) &= \abs{D_x \Psi_{\oNSv\cup\{(x+y,\uS(v)+r)\}} \PsiSX(v+1,\infty,\oNt \})f} \\
&= \abs{D_x \Psi_{\oN_{[\uS(v),\uS(v)+r[}}\Psi_{x+y}\Psi_{\oN_{[\uS(v)+r,\oS(v)[}} \PsiSX(v+1,\infty,\oNt)f}	\\
&= \abs{D_x \Psi_{S'_{r,v}, X'_y}(v'-1,\infty,\oNt)f}
\end{align}
Using the above with \eqref{eq:Poisson-replacement} gives us
\begin{align}
\eqref{eq:leaf-3} &\leq (1+S(v)\abs{\cN})\int_{0}^{S(v)}\sum_{y\in\cN}\int \abs{D_x \Psi_{S'_{r,v}, X'_y}(v'-1,\infty,\oNt)f}d\oNt dr. \label{eq:leaf-4}
\end{align}

Using all the previous estimates,
\begin{align}
 &I(T,v,S) \leq \sup_{X\in\cX_T}\sum_{x\in\cG} \Big( \normb{P_{S(v)}\abs{D_x \PsiSX[x+X](v+1,\infty, \cdot)f}}_{p\left(\uS(v)\right)} \nonumber \\
 &\quad+ (1+S(v)\abs{\cN})\int_0^{S(v)}\sum_{y\in\cN}\normb{D_x \Psi_{S'_{r,v},x+X'_y}(v'-1,\infty,\cdot)f }_{p\left(\uS(v)\right)} \,dr\Big)^2 \nonumber \\
 &\leq 2 \sup_{X\in\cX_T}\sum_{x\in\cG} \normb{P_{S(v)}\abs{D_x \PsiSX[x+X](v+1,\infty,\cdot)f}}_{p\left(\uS(v)\right)}^2 \label{eq:leaf-5}\\
 &\quad+ 2 \abs{\cN} S(v)(1+S(v)\abs{\cN})^2 \intc{0}{S(v)}\sum_{y\in\cN} \sup_{X\in\cX_T}\sum_{x\in\cG} \normb{D_x \Psi_{S'_{r,v},x+X'_y}(v'-1,\infty,\cdot)f}_{{p\left(\uS(v)\right)}}^2 \,dr .\label{eq:leaf-6}
\end{align}
Using hypercontractivity \eqref{eq:hypercontractivity}, 
\[ \eqref{eq:leaf-5} \leq 2 \sup_{X\in\cX_T}\sum_{x\in\cG} \normb{D_x \PsiSX[x+X](v+1,\infty,\cdot)f }_{p(\oS(v))}^2 = 2 I(T,v+1,S). \]
Since $\uS(v)=\underline{S'_{r,v}}(v'-1)$, and taking the supremum over $\cX_{T'_v}$,
\begin{align*}
 \eqref{eq:leaf-6} 
 &\leq 2 \abs{\cN}^2 S(v)(1+S(v)\abs{\cN})^2 \intc{0}{S(v)} \sup_{X'\in\cX_{T'_v}}\sum_{x\in\cG} \normb{D_x \Psi_{S'_{r,v},x+X'}(v'-1,\infty,\cdot)f}_{{p\left(\underline{S'_{r,v}}(v'-1)\right)}}^2 \,dr	\\
 &= 2 \abs{\cN}^2 S(v) (1+S(v)\abs{\cN})^2\int_0^{S(v)} I(T'_v,v'-1,S'_{r,v}) \,dr. \qedhere
\end{align*}
\end{proof}

\section{The measures \texorpdfstring{$m_{T,t}$}{m} and the iteration procedure}\label{section:full-iteration}
When looking at the error term introduced in Lemma \ref{lemma:leaf} we see that there is some integration happening. When applying Lemma \ref{lemma:leaf} (in the form of Corollary \ref{cor:leaf}) to the integrand of the error term, we get another integral. All together, to describe the total influence of an error term we will have to integrate with respect to some measure which depends on the tree structure and the time interval $[0,t]$. We call this measure $m_{T,t}$. 

Before defining the measure $m_{T,t}$ we have to introduce a bit of notation. Given a tree $T$, $T\neq \{0\}$, we denote by $T_L$ the \emph{left subtree} of $T$, which is the unique tree $T_L$ for which a map $\iota_L : T_L \to T$ with the following properties exists:
\begin{itemize}
 \item $\iota_L$ is injective and order preserving;
 \item $\iota_L(T_L) = \{ v\in T : v< 0 \}$;
 \item $\iota_L(0) = l(0)$.
\end{itemize}
Analogously we define $T_R$, the \emph{right subtree} of $T$. 

Fix a tree $T\neq \{0\}$ and $t\geq 0$. For $0\leq s \leq t$, choose $S_L\in \cS_{T_L, s}$ and $S_R\in\cS_{T_R,t-s}$. From the partitions $S_L$ and $S_R$ on the subtrees we define a corresponding $T$-partition of $[0,t]$,
\begin{align}
 (S_L, t, S_R)(v) := \begin{cases}
                      S_L(\iota_L^{-1}(v)),\quad&v<0;\\
                      t, &v=0;\\
                      S_R(\iota_R^{-1}(v)),\quad&v>0;
                     \end{cases}
\end{align}

For $T\in \cT_n$ and $t\geq 0$ we define the measure $m_{T,t}$ on $\cS_{T,t}$ recursively. If $T=\{0\}$, $m_{T,t} := \delta_{0\mapsto t}$, the Dirac measure on the single element of $\cS_{\{0\},t}$. If $T\neq \{0\}$,
\begin{align}
 m_{T,t} := t\int_0^t\int_{\cS_{T_L,s}} \int_{\cS_{T_R,t-s}} \delta_{(S_L, t, S_R)} \,m_{T_R,t-s}(dS_R)\,m_{T_L,s}(dS_L)\,ds.		\label{eq:m-definition}
\end{align}
The structure of $m_{T,t}$ is chosen to mirror the structure of the second right hand term in Lemma \ref{lemma:leaf} (for $t$ small). 
The recursive nature makes both terms somewhat difficult to understand. However, we obtain a reasonable estimate on the mass of $m_{T,t}$.
\begin{lemma}\label{lemma:m-mass}
For $T\in\cT_n$, $t\geq 0$,
\[ \intc{\cS_{T,t}}{} 1 \,m_{T,t}(dS) \leq \frac{t^{2n-2}}{(2n-3)!!}, \]
where $(2k-1)!! = \prod_{i=1}^k(2i-1)$ is the double factorial, with the convention of $(-1)!!=1$.
\end{lemma}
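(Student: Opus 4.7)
The plan is to proceed by induction on $n$, exploiting the recursive definition \eqref{eq:m-definition} of $m_{T,t}$.

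\textbf{Base case.} For $n=1$ the only tree is $T = \{0\}$, and by definition $m_{\{0\},t} = \delta_{0\mapsto t}$ has mass $1$, which matches the bound $t^{0}/(-1)!! = 1$.

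\textbf{Inductive step.} Assume the claim for all trees with fewer than $n$ leaves, and let $T \in \cT_n$ with $n \geq 2$. Then $T_L \in \cT_k$ and $T_R \in \cT_{n-k}$ for some $1 \leq k \leq n-1$. From \eqref{eq:m-definition} and Fubini,
\begin{align}
\int_{\cS_{T,t}} 1 \, m_{T,t}(dS) = t \int_0^t \left(\int_{\cS_{T_L,s}} 1\, m_{T_L,s}(dS_L)\right) \left(\int_{\cS_{T_R,t-s}} 1\, m_{T_R,t-s}(dS_R)\right) ds.
\end{align}
Applying the inductive hypothesis to both factors gives
\begin{align}
\int_{\cS_{T,t}} 1 \, m_{T,t}(dS) \leq \frac{t}{(2k-3)!!\,(2n-2k-3)!!} \int_0^t s^{2k-2}(t-s)^{2n-2k-2}\, ds.
\end{align}
The remaining integral is a Beta integral equal to $t^{2n-3} (2k-2)!(2n-2k-2)!/(2n-3)!$.

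\textbf{The key combinatorial identity.} It remains to check that
\begin{align}
\frac{(2k-2)!(2n-2k-2)!}{(2n-3)!\,(2k-3)!!\,(2n-2k-3)!!} \leq \frac{1}{(2n-3)!!}.
\end{align}
Setting $j = k-1$ and $l = n-k-1$ so that $j+l = n-2$, and using the standard identities $(2m)! = 2^m m! \,(2m-1)!!$ and $(2m+1)! = (2m+1)!!\cdot 2^m m!$, the left side simplifies via
\begin{align}
\frac{(2j)!(2l)!}{(2j+2l+1)!\,(2j-1)!!\,(2l-1)!!} = \frac{2^j j!\cdot 2^l l!}{(2j+2l+1)!!\cdot 2^{j+l}(j+l)!} = \frac{1}{(2j+2l+1)!!}\cdot\frac{j!\,l!}{(j+l)!}.
\end{align}
Since $j!\,l!/(j+l)! = 1/\binom{j+l}{j} \leq 1$, the inequality holds, completing the induction.

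I do not expect any serious obstacle: the recursion factors cleanly through a Beta integral, and the combinatorial identity reduces to the trivial bound $\binom{n-2}{k-1} \geq 1$. The only mild care needed is to handle the double factorial bookkeeping and the edge cases $k=1$ or $k=n-1$ (where $(2k-3)!! = (-1)!! = 1$ by convention), but these fit into the same formulas.
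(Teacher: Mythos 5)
Your proof is correct and follows the paper's own argument essentially verbatim: induction via the recursive definition \eqref{eq:m-definition}, evaluation of the resulting Beta integral, and the double-factorial inequality, which the paper leaves as ``a simple calculation'' and you verify explicitly via $(2m)!=2^m m!\,(2m-1)!!$ and the bound $\binom{j+l}{j}\geq 1$.
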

\begin{proof}
 The proof will be by induction over $n$. For $n=1$ both sides of the statement equal 1. For the induction step we use the recursive definition of $m_{T,t}$. Assume that the statement holds for all trees in $\cT_k, 1\leq k\leq n$ and all $t\geq 0$. For any $T\in \cT_{n+1}$ and any $t\geq 0$,
 \begin{align}
  \int_{\cS_{T,t}} 1 \,m_{T,t}(dS) &= t\int_0^t\int_{\cS_{T_L,s}} \int_{\cS_{T_R,t-s}} 1 \,m_{T_R,t-s}(dS_R)\,m_{T_L,s}(dS_L)\,ds \nonumber\\
  &\leq t\int_0^t\frac{s^{2n_L-2}}{(2n_L-3)!!}\frac{(t-s)^{2n_R-2}}{(2n_R-3)!!}\,ds, \label{eq:mass-1}
 \end{align}
where $T_L\in \cT_{n_L}$ and $T_R\in \cT_{n_R}$. Using partial integration,
\begin{align*}
 \eqref{eq:mass-1} = \frac{(2n_L-2)!(2n_R-2)!}{(2n_L-3)!!(2n_R-3)!!(2n_L+2n_R-3)!} t^{2n_R+2n_L-2} 
\end{align*}
A simple calculation shows that $\frac{(2n_L-2)!(2n_R-2)!}{(2n_L-3)!!(2n_R-3)!!(2n_L+2n_R-3)!} \leq \frac{1}{(2n_L+2n_r-3)!!}$, and since $n_L+n_R=n+1$ this completes the proof.
\end{proof}

To facilitate the recursive argument which will ultimately lead to a proof of Theorem \ref{thm:nice} we need to look at the vertices of a tree in more detail. First, we denote by
\begin{align}
 \#v_- := \abs{\left\{ w \in \partial T : w<v \right\}},\quad T\in\cT, v\in T,
\end{align}
the number of leaves before a vertex $v$. Next we define 
\begin{align}
 B(T)=\max\{v\in \oT : v-1,v+1 \in \partial T\},\quad T\in\cT,
\end{align}
with the convention that the maximum of the empty set is $-\infty$. In other words, $B(T)$ is the last simple branching point in $T$, with simple referring to the fact that both its children are leaves.
\begin{lemma}\label{lemma:n-G}
For a tree $T$ and $T'_v$ as in Lemma \ref{lemma:leaf}, we have the following:
 \begin{enumerate}
  \item Let $v\in\partial T$ be a leaf which satisfies $v\geq B(T)-1$. Then in the expanded tree $T'_v$, $v'=B(T'_v)$. In other words, the embedding $v'$ of $v$ into $T'_v$ is the last simple branching point in $T'_v$. Additionally, the number of leaves before $v$ satisfies $\#v_- = \#B(T'_v)_--1$.
  \item The set $\cT_{n+1}$ of full binary trees with exactly $n+1$ leaves can be obtained as the disjoint union of expansions of the trees in $\cT_n$: $\cT_{n+1} = \mathring{\bigcup}_{T\in\cT_n}\left\{ T'_v : v\in\partial T, v\geq B(T)-1\right\}$.
 \end{enumerate}
\end{lemma}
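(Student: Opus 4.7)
For part (a), I plan to track how the set of simple branching points transforms under the expansion $T \mapsto T'_v$. The canonical inclusion $T \setminus \{v\} \hookrightarrow T'_v$ preserves the well-ordering, with $v$ corresponding to $v'$. A vertex $w \in \oT$ remains a simple branching point of $T'_v$ if and only if $w$ was a simple branching point of $T$ and neither child of $w$ in $T$ equals $v$; additionally, $v'$ itself is always a new simple branching point of $T'_v$. The key geometric lemma I will establish is that every simple branching point $w \neq B(T)$ of $T$ satisfies $w \leq B(T) - 2$: the subtree rooted at $B(T)$ consists of exactly the three positions $\{B(T)-1, B(T), B(T)+1\}$, so $w$ is neither a descendant nor an ancestor of $B(T)$ (ancestors of $B(T)$ have the interior child leading to $B(T)$ and thus are not simple); hence the subtrees of $w$ and $B(T)$ are disjoint and, by maximality $w < B(T)$, the subtree of $w$ lies strictly to the left.

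Using this, I then split on the two subcases of $v \geq B(T) - 1$. If $v \in \{B(T)-1, B(T)+1\}$, then $B(T)$ is the parent of $v$ with a leaf sibling, so $B(T)$ loses simple-branching-point status in $T'_v$; the surviving simple branching points are $v'$ together with the $w \neq B(T)$ from $T$, all satisfying $w \leq B(T)-2 \leq v-1$, and so lying to the left of $v'$. If $v > B(T)+1$, then $v$'s parent in $T$ cannot have been simple (else it would violate maximality of $B(T)$), so every simple branching point of $T$ survives in $T'_v$; but each is bounded by $B(T) < v$, again to the left of $v'$. In either subcase $v' = B(T'_v)$. The identity $\#v_- = \#B(T'_v)_- - 1$ then follows by noting that the two new leaves in $T'_v$ are precisely the immediate neighbors of $v'$ in $\iota_\bZ$, namely $v'-1$ and $v'+1$; so the leaves of $T'_v$ strictly less than $v'$ are exactly the image of the leaves of $T$ strictly less than $v$, plus the new left leaf $v'-1$.

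For part (b), the plan is a bijection argument built on (a). Distinct pairs $(T,v)$ admitted by the statement produce distinct $T'_v \in \cT_{n+1}$, because by (a) the expanded leaf can be recovered from $T'_v$ as the collapse of $B(T'_v)$. For surjectivity, fix $T^* \in \cT_{n+1}$; since $T^*$ has at least two leaves, $B(T^*)$ is finite (a leaf of maximal depth must have a leaf sibling, since an interior sibling would have children at even greater depth). Collapse the two leaf children of $B(T^*)$ so that $B(T^*)$ becomes a leaf $v$ in a smaller tree $T \in \cT_n$; then $T'_v = T^*$ by construction. To check $v \geq B(T) - 1$, suppose for contradiction $v < B(T) - 1$; then $B(T) \neq v$, both children of $B(T)$ in $T$ are leaves distinct from $v$ and hence remain leaves in $T^* = T'_v$, so $B(T)$ would be a simple branching point of $T^*$ strictly greater than $v' = B(T^*)$, a contradiction. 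The main obstacle throughout is the careful bookkeeping under the expansion, but it collapses entirely to the single geometric observation that the subtree of $B(T)$ occupies only three positions in $\iota_\bZ(T)$, pinning down all other simple branching points comfortably to its left.
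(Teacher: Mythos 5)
Your proof is correct and follows essentially the same route as the paper: the same case split in (a) according to whether $v$ is a child of $B(T)$ (tracking how simple branching points survive the expansion, with $\#v'_-=\#v_-+1$ giving the leaf count), and in (b) the same inverse construction of collapsing the two leaf children of $B(T'_v)$ to establish bijectivity and the admissibility condition $v\geq B(T)-1$. Your write-up simply supplies more detail (e.g.\ the observation that all other simple branching points lie at positions $\leq B(T)-2$, and the existence of a simple branching point) than the paper's terse argument.
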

\begin{proof}
 If $v$ is not a child of $B(T)$, then $v>B(T)$ and $v'=B(T'_v)$. If $v$ is a child of $B(T)$, then $B(T)'$ is no longer a simple branching point, and hence $v'=B(T'_v)$. From this and $\#v'_-=\#v_-+1$ follows $\#v_- = \#B(T'_v)_--1$.
 
 For a tree $T\in \cT_{n+1}$, let $T^*\in\cT_n$ be the tree obtained by removing the two child leaves of $B(T)$. By slight abuse of notation we also write $B(T)$ for the leaf in $T^*$ so that $(T^*)'_{B(T)}=T$. By construction of $T^*$, $B(T^*)< B(T)$ or $B(T)$ is a child of $B(T^*)$. Hence
 $ \cT_{n+1} \subset \bigcup_{T\in\cT_n}\left\{ T'_v : v\in\partial T, v\geq B(T)-1\right\}$.
 The other inclusion is obvious. To show that the union is disjoined, we observe that $(T'_v)^*=T$ for any $v\in\partial T, v\geq B(T)-1$.
\end{proof}

We are now in state to use Lemmas \ref{lemma:interior} and \ref{lemma:leaf} iteratively, providing us with a first commutation property between $D_x$ and $P_t$. This iteration procedure is captured in the following theorem.
\begin{theorem}\label{thm:power}
Suppose $t\leq 1$. For any $N\geq0$,
\begin{align*}
\sum_{x \in \cG}\norm{D_x P_t f}^2_2 
&\leq \left( \sum_{n=1}^N 2\cdot18^{n-1}(2\cdot \abs{\cN}^2(1+\abs{\cN})^2)^{n-1} \frac{\abs{\cT_n}}{(2n-3)!!}t^{2n-2} \right) \sum_{x\in\cG}\norm{D_x f}^2_{p(t)}\\
&+(2\cdot\abs{\cN}^2(1+\abs{\cN})^2)^N \sum_{\mathclap{T\in\cT_{N+1}}} 18^{\#(B(T)-1)_-}\int_{\cS_{T,t}} I(T,B(T)-1,S) \,m_{T,t}(dS).
\end{align*}
\end{theorem}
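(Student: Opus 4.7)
My plan is an induction on $N$ using the one-step reductions provided by Lemma \ref{lemma:interior} and Corollary \ref{cor:leaf}. The base case $N=0$ is essentially tautological: the first sum is empty, $\cT_1=\{\{0\}\}$, $B(\{0\})=-\infty$, $\#(-\infty)_-=0$, $m_{\{0\},t}=\delta_{0\mapsto t}$, and the right-hand side collapses to $I(\{0\},-\infty,S)$, so the claim is exactly \eqref{eq:I-at-0}.

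For the inductive step, fix $T\in\cT_{N+1}$ and set $\ell:=\#(B(T)-1)_-$. I will reduce $I(T,B(T)-1,S)$ by walking through $T$ in its well-ordering, starting at the leaf $v_0:=B(T)-1$ and stopping past the maximal leaf $L_{N+1}$. At each leaf I apply Corollary \ref{cor:leaf} (producing a factor $2$ on the main term plus an error contribution involving the enlarged tree $T'_v\in\cT_{N+2}$), and at each interior vertex Lemma \ref{lemma:interior} (factor $9$). Because leaves and interior vertices alternate, each (leaf, interior) pair contributes $18$; there are $N-\ell$ such pairs before the final leaf, so the propagating term $I(T,\infty,S)=\sum_x\norm{D_x f}^2_{p(t)}$ ends up with factor $2\cdot 18^{N-\ell}$, while the error term produced at the $j$-th leaf visited (i.e.\ $v=L_{\ell+1+j}$) carries factor $18^j$. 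Multiplying through by $18^\ell$ (and using $\#v_-=\ell+j$ for $v=L_{\ell+1+j}$),
\begin{align*}
 18^\ell\, I(T,B(T)-1,S)
 &\leq 2\cdot 18^{N}\, I(T,\infty,S) \\
 &\quad+ C\sum_{\substack{v\in\partial T\\ v\geq B(T)-1}} 18^{\#v_-}\,S(v)\int_0^{S(v)} I\bigl(T'_v,\,v'-1,\,S'_{r,v}\bigr)\,dr,
\end{align*}
with $C:=2\abs{\cN}^2(1+\abs{\cN})^2$.

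Next I integrate against $m_{T,t}(dS)$ and sum over $T\in\cT_{N+1}$. For the main term, Lemma \ref{lemma:m-mass} gives $m_{T,t}(\cS_{T,t})\leq t^{2N}/(2N-1)!!$, so the total contribution is at most $2\cdot 18^N\abs{\cT_{N+1}}t^{2N}/(2N-1)!!$ times $\sum_x\norm{D_x f}^2_{p(t)}$, which after multiplication by $(2\abs{\cN}^2(1+\abs{\cN})^2)^N$ coming from the inductive hypothesis is precisely the $n=N+1$ term of the first sum. For the error terms I invoke the identity
\begin{align*}
 \int S(v)\int_0^{S(v)} g(S'_{r,v})\,dr\,m_{T,t}(dS)
 \;=\; \int g(S')\,m_{T'_v,t}(dS'),\qquad v\in\partial T,
\end{align*}
which I plan to verify by a short induction on the depth of $v$ in $T$ using the recursive definition \eqref{eq:m-definition}. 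Combined with Lemma \ref{lemma:n-G}(a) (giving $v'-1=B(T'_v)-1$ and $\#v_-=\#(B(T'_v)-1)_-$) and Lemma \ref{lemma:n-G}(b) (which re-indexes the double sum $\sum_{T\in\cT_{N+1}}\sum_{v\in\partial T,\,v\geq B(T)-1}$ as the single sum $\sum_{T'\in\cT_{N+2}}$), the error contribution becomes $C\sum_{T'\in\cT_{N+2}}18^{\#(B(T')-1)_-}\int I(T',B(T')-1,S')\,m_{T',t}(dS')$. Substituting into the inductive hypothesis and noting $(2\abs{\cN}^2(1+\abs{\cN})^2)^N\cdot C=(2\abs{\cN}^2(1+\abs{\cN})^2)^{N+1}$ closes the induction.

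The individual inequalities are routine applications of the lemmas already at hand; the substance of the proof is combinatorial and measure-theoretic bookkeeping. The most delicate steps will be (i) verifying the factoring identity between $m_{T,t}$ and $m_{T'_v,t}$---this is the very reason for defining $m_{T,t}$ recursively as in \eqref{eq:m-definition}---and (ii) keeping the exponents of $18$ consistent across the re-indexing $(T,v)\leftrightarrow T'$ of Lemma \ref{lemma:n-G}(b).
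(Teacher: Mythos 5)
Your proposal is correct and follows essentially the same route as the paper: induction on $N$ anchored at \eqref{eq:I-at-0}, then iterating Corollary \ref{cor:leaf} and Lemma \ref{lemma:interior} along the alternating leaf/interior ordering from $B(T)-1$, integrating against $m_{T,t}$, bounding the main term via Lemma \ref{lemma:m-mass}, and re-indexing the error terms through the $m_{T,t}$-to-$m_{T'_v,t}$ factorization and Lemma \ref{lemma:n-G}. The only differences are cosmetic (you phrase the step for $T\in\cT_{N+1}$ rather than $\cT_N$, and you make explicit the inductive verification of the measure identity that the paper asserts ``by construction''), so the bookkeeping of the factors $2$, $9$, $18^{\#v_-}$ and $(2\abs{\cN}^2(1+\abs{\cN})^2)^N$ matches the paper's proof.
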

\begin{proof}
The proof will be by induction. For $N=0$, we observe that
\begin{align*}
&\sum_{T\in\cT_{1}} 18^{\#(B(T)-1)_-}\int_{\cS_{T,t}} I(T,B(T)-1,S) \,m_{T,t}(dS) \\
&\quad= I(\{0\},-\infty, 0\mapsto t) \geq \sum_{x\in\cG}\norm{D_x P_t f}_2^2,
\end{align*}
where we used \eqref{eq:I-at-0} for the last step.
To show the induction step, it suffices to prove that
\begin{align*}
 & \sum_{T\in\cT_{N}} 18^{\#(B(T)-1)_-}\int_{\cS_{T,t}} I(T,B(T)-1,S) \,m_{T,t}(dS) \\
 &\quad\leq  2\cdot18^{N-1} \frac{\abs{\cT_N}}{(2N-3)!!}t^{2N-2} \cdot\sum_{x\in\cG}\norm{D_x f}^2_{p(t)}  \\
 &\quad+2\cdot\abs{\cN}^2 (1+\abs{\cN})^2 \sum_{T\in\cT_{N+1}} 18^{\#(B(T)-1)_-}\int_{\cS_{T,t}} I(T,B(T)-1,S) \,m_{T,t}(dS).
\end{align*}
Fix $T\in\cT_N$ and let $v\in\partial T$. Now we apply Corollary \ref{cor:leaf} to $I(T,v,S)$. If $v+1=\infty$, then
\begin{align*}
I(T,v,S) &\leq 2\sum_{x\in\cG}\norm{D_x f}_{p(t)}^2 + 2\cdot \abs{\cN}^2(1+\abs{\cN})^2 S(v)\int_0^{S(v)} I(T'_v, v'-1, S'_{r,v})dr.
\end{align*}
Otherwise, $v+2<\infty$ as well, and using Lemma \ref{lemma:interior} in addition to Corollary \ref{cor:leaf} yields
\begin{align*}
I(T,v,S) &\leq 2I(T,v+1,S) + 2\cdot \abs{\cN}^2(1+\abs{\cN})^2 S(v)\int_0^{S(v)} I(T'_v, v'-1, S'_{r,v})dr \\
&\leq 18 I(T,v+2,S) + 2\cdot \abs{\cN}^2(1+\abs{\cN})^2 S(v)\int_0^{S(v)} I(T'_v, v'-1, S'_{r,v})dr.
\end{align*}
As $v+2$ is again a leaf, we can iterate. Therefore,
\begin{align*}
I(T,B(T)-1,S) &\leq \sum_{\mathclap{\substack{v\in\partial T \\ v\geq B(T)-1}}} 18^{\#v_- - \#(B(T)-1)_-} 2\cdot \abs{\cN}^2(1+\abs{\cN})^2 S(v)\intc{0}{S(v)} I(T'_v, v'-1, S'_{r,v})\,dr \\
&+2 \cdot 18^{N-1-\#(B(T)-1)_-}\sum_{x\in\cG}\norm{D_x f}^2_{p(t)}.
\end{align*}
Integrating with respect to $m_{T,t}$ and multiplying with $18^{\#(B(T)-1)_-}$, we get
\begin{align*}
&18^{\#(B(T)-1)_-} \int_{\cS_{T,t}} I(T,v,S)\,m_{T,t}(dS) \\
&\quad\leq \sum_{\substack{v\in\partial T \\ v\geq B(T)-1}} 18^{\#v_-} 2\cdot \abs{\cN}^2 (1+\abs{\cN})^2 \int_{\cS_{T,t}}S(v)\int_0^{S(v)} I(T'_v, v'-1, S'_{r,v})\,dr \, m_{T,t}(dS) \\
&\quad+2 \cdot 18^{N-1}\int_{\cS_{T,t}} 1 \,m_{T,t}(dS) \sum_{x\in\cG}\norm{D_x f}^2_{p(t)}.
\end{align*}
Note that by Lemma \ref{lemma:m-mass}, 
\[ \intc{\cS_{T,t}}{} 1 \,m_{T,t}(dS) \leq \frac{t^{2N-2}}{(2N-3)!!} . \]
By construction of the measures $m_{T,t}$ and using the Lemma \ref{lemma:n-G},
\begin{align*}
 &\int_{\cS_{T,t}}S(v)\int_0^{S(v)} I(T'_v, v'-1, S'_{r,v})\,dr \, m_{T,t}(dS) \\
 &\quad= \int_{\cS_{T'_v,t}} I(T'_v, B(T'_v)-1, S') \, m_{T'_v,t}(dS')
\end{align*}
Summing over $T\in\cT_N$ completes the proof by using Lemma \ref{lemma:n-G} again.
\end{proof}
\begin{proof}[Proof of Theorem \ref{thm:nice}]
The proof consists of two parts.  
In a first step, we will show that for $t\leq 1$
\begin{align}\label{eq:nice-1}
 \sum_{x\in\cG} \norm{D_x P_t f}^2_2 \leq 2e^{\lambda t^2}\sum_{x\in\cG}\norm{D_x f}^2_{p(t)},
\end{align}
with $\lambda=72\cdot\abs{\cN}^2(1+\abs{\cN})^2$.

It is known from combinatorics that $\abs{\cT_{n+1}}=C_n$, where $C_n= \frac{(2n)!}{(n+1)!n!}$ is the $n$th Catalan number \cite{STANLEY:99}. Therewith 
\[ \frac{\abs{\cT_{n+1}}}{(2n-1)!!} = \frac{2^n}{(n+1)!}, \]
and by Theorem \ref{thm:power},
\begin{align*}
 \sum_{x\in\cG}\norm{D_x P_t f}_2^2 \leq 2\sum_{n=0}^\infty \frac{1}{(n+1)!}\left(72\abs{\cN}^2(1+\abs{\cN})^2 t^2\right)^n \leq 2 e^{72\abs{\cN}^2(1+\abs{\cN})^2 t^2},
\end{align*}
showing \eqref{eq:nice-1}. To obtain the claim \eqref{eq:nice-thm} from \eqref{eq:nice-1} we first note that without changes in the proofs except for notation
\begin{align}
 \sum_{x\in\cG} \norm{D_x P_t f}^2_{p(s)} \leq 2e^{\lambda t^2}\sum_{x\in\cG}\norm{D_x f}^2_{p(s+t)}	\label{eq:selfimprove}
\end{align}
holds as well for any $s\geq 0$. When we write $P_t= (P_{t/\lceil t \rceil})^{\lceil t \rceil}$, and apply \eqref{eq:selfimprove} $\lceil t \rceil$ times, we get
\begin{align}
 \sum_{x\in\cG} \norm{D_x P_t f}^2_2 \leq 2^{\lceil t \rceil} e^{\lambda \left(\frac{t}{\lceil t \rceil}\right)^2}\sum_{x\in\cG}\norm{D_x f}^2_{p(t)}.
\end{align}
Using $\lceil t \rceil \leq t+1$ and $t/\lceil t \rceil \leq 1$ 
 yields \eqref{eq:nice-thm} with $\widetilde C=2e^{\lambda}$ to show the theorem. 
\end{proof}

\bibliographystyle{plain}
\bibliography{BibCollection}

\end{document}